\newtheoremstyle{standard}
 {16pt}  %Abstand nach oben
 {16pt}  %Abstand nach oben
 {}  %Schrift Text
 {}  %Einrücken
 {\bfseries}%\normalfont} %Schrift Kopf
 {}  %Zeichen nach Kopf
 { } %Abstand
 {{\thmname{#1~}}{\thmnumber{#2.}}\thmnote{~(#3)}} %Format
\newtheoremstyle{kursiv}
 {16pt}  %Abstand nach oben
 {16pt}  %Abstand nach oben
 {\itshape}  %Schrift Text
 {}  %Einrücken
 {\bfseries}%\normalfont} %Schrift Kopf
 {}  %Zeichen nach Kopf
 { } %Abstand
 {{\thmname{#1~}}{\thmnumber{#2.}}\thmnote{~(#3)}} %Format
\theoremstyle{standard}
\newtheorem{defn} [subsection]{Definition}
\newtheorem{ex} [subsection]{Example}
\newtheorem{rem}   [subsection]{Remark}
\newtheorem{nota}   [subsection]{Notation}
\newtheorem{setup} [subsection]{}
\theoremstyle{definition}
\theoremstyle{kursiv}
\newtheorem{thm}[subsection]{Theorem}
\newtheorem{prop} [subsection]{Proposition}
\newtheorem{cor} [subsection]{Corollary}
\newtheorem{lem} [subsection]{Lemma}
\newcommand{\Evol}{\mathrm{Evol}}
\newcommand{\im}{\mathrm{im}}
\newcommand{\evol}{\mathrm{evol}}
\newcommand{\ev}{\mathrm{ev}}
\newcommand{\di}{\mathrm{d}}
\newcommand{\id}{\mathrm{id}}
\newcommand{\N}{\mathbb{N}}
\newcommand{\R}{\mathbb{R}}
\newcommand{\K}{\mathbb{K}}
\newcommand{\C}{\mathbb{C}}
\newcommand{\LB}[1][\cdot \hspace{1pt} , \cdot]{[\hspace{1pt} #1 \hspace{1pt} ]}
\newcommand{\BHol}{\Hol_{\mathrm{b}}}
\newcommand{\Hol}{\mathrm{Hol}}
\newcommand{\norm}[1]{\left\lVert #1 \right\rVert}
\newcommand{\abs}[1]{\left\lvert #1 \right\rvert}
\newcommand{\opnorm}[1]{\norm{#1}_\text{op}}
\newcommand{\ve}{\varepsilon}
\newcommand{\coloneq}{\colonequals}
\DeclareMathOperator{\OST}{OST}
\DeclareMathOperator{\splitting}{SP}
\newcommand{\SP}[1]{\splitting ( #1 )}
\DeclareMathOperator*{\GL}{GL}
\DeclareMathOperator{\dom}{dom}
\DeclareMathOperator{\one}{\mathbf{1}}
\DeclareMathOperator{\an}{\mathbf{a}}
\DeclareMathOperator{\bn}{\mathbf{b}}
\DeclareMathOperator{\DiffGerm}{DiffGerm}
\DeclareMathOperator{\Germ}{Germ}
\DeclareMathOperator{\Gf}{Gf}
\newcommand{\BGp}{\ensuremath{G_{\mathrm{TM}}}}
\newcommand{\BGC}{\ensuremath{G_{\mathrm{TM}}^{\C}}}
\newcommand{\tBG}{\ensuremath{\bB_{\mathrm{TM}}}}
\newcommand{\tBGC}{\ensuremath{\tBG^\C}}
\newcommand{\ASUB}{\ensuremath{K_{n}}}
\newcommand{\cASUB}{\ASUB^{\C}}
\newcommand{\Mo}[1]{\ensuremath{E^{#1}}}
\newcommand{\Msp}[1]{\ensuremath{\Mo{#1}_0}}
\newcommand{\RT}{\ensuremath{\tT}}
\newcommand{\bB}{\ensuremath{\mathcal{B}}}
\newcommand{\fF}{\ensuremath{\mathcal{F}}}
\newcommand{\tT}{\ensuremath{\mathcal{T}}}
\newcommand{\pP}{\ensuremath{\mathcal{P}}}
\newcommand{\Lf}{\ensuremath{\mathbf{L}}}
\newcommand{\Frechet}{Fr\'echet }
\newcommand{\formalmor}{\mathrm{FB}}
\tikzstyle dtree=[grow'=up,sibling distance=2mm,level distance=2mm,thick]
\tikzstyle dtree node=[scale=0.3,shape=circle,very thin,draw]
\tikzstyle dtree black node=[style=dtree node,fill=black]
\newcommand{\onenode}{
  \begin{tikzpicture}[dtree]
    \node[dtree black node] {}
    ;
  \end{tikzpicture}
}
\title{The tame Butcher group}
 \author{G. Bogfjellmo\footnote{NTNU Trondheim, Norway \href{mailto:geir.bogfjellmo@math.ntnu.no}{geir.bogfjellmo@math.ntnu.no}}%
 \ \ and A.
Schmeding\footnote{NTNU Trondheim, Norway
\href{mailto:alexander.schmeding@math.ntnu.no}{alexander.schmeding@math.ntnu.no}
}}
\begin{document}

\maketitle

\begin{abstract}
 The Butcher group is a powerful tool to analyse integration methods for ordinary differential equations, in particular Runge--Kutta methods.
 Recently, a natural Lie group structure has been constructed for this group. Unfortunately, the associated topology is too coarse for some applications in numerical analysis.
 In the present paper, we propose to remedy this problem by replacing the Butcher group with the subgroup of all exponentially bounded elements.
 This ``tame Butcher group'' turns out to be an infinite-dimensional Lie group with respect to a finer topology.
 As a first application, we show that the correspondence of elements in the tame Butcher group with their associated B-series induces certain Lie group (anti)morphisms.
\end{abstract}

\medskip

\textbf{Keywords:} Butcher group, infinite-dimensional Lie group,
Silva space, regularity of Lie groups, B-series, group of germs of diffeomorphisms

\medskip

\textbf{MSC2010:} 22E65 (primary); 	%\infty-dim Lie groups
65L06, 					%Num. Anal.: Multistep, Runge-Kutta and extrapolation methods
58A07 					%Real-analytic and Nash manifolds
(secondary)

\tableofcontents

\section*{Introduction and statement of results}
 In the analysis of numerical integrators an important tool is the so called Butcher group.
 J.C.\ Butcher introduced this group in \cite{Butcher72} as a group of mappings on rooted trees.
 To each element in the group one associates a (formal) power series -- the B-series -- and the group product models composition of these power series.
 The interplay between the combinatorial structure of the trees and the group product enables one to handle (formal) power series solutions of non-linear ordinary differential equations.
 Building on Butcher's ideas, the Butcher group and methods associated to B-series have been extensively studied, see e.g.\ \cite{CHV2010,HLW2006,Brouder-04-BIT,MMMV14,BS14}.
 \medskip

 The Butcher group is in a certain sense very large, i.e.\ it contains elements whose associated B-series do not converge, even locally.
 To remedy this, the B-series are usually truncated and one has to derive some bound on the truncation error.
 In the literature, rigorous error estimates are available for elements in the group whose series coefficients satisfy exponential growth bounds, see e.g.\ the investigation in \cite{HL1997}.
 This motivates the study of the subgroup of all suitably bounded elements from the perspective of infinite-dimensional Lie theory.
 In the present paper, we investigate this subgroup, which we call the \emph{tame Butcher group}.

 Our investigation is based on a concept of $C^r$-maps between locally convex spaces known as Bastiani's calculus \cite{MR0177277} or Keller's $C^r_c$-theory~\cite{keller1974} (see \cite{milnor1983,hg2002a,neeb2006} for streamlined expositions).
 For the reader's convenience Appendix \ref{App: lcvx:diff} contains a brief recollection of the basic notions used throughout the paper.
 Our aim is to construct an infinite-dimensional Lie group structure on the tame Butcher group and to study its properties.
 It turns out that the tame Butcher group is a locally convex Lie group modelled on an inductive limit of Banach spaces.
 This structure is naturally related to the Lie group structure of the Butcher group constructed in \cite{BS14}.
 Moreover, as a consequence of the estimates established in \cite{HL1997}, one can even construct natural Lie group (anti)morphisms into groups of germs of diffeomorphisms.
 These morphisms represent the Lie theoretic formulation of the mechanism which associates to an element in the (tame) Butcher group its B-series.
 \medskip

 We now go into some more detail and explain the main results of the present paper.
 Let us first recall some notation and the definition of the (tame) Butcher group.
 Denote by $\RT$ the set of all rooted trees with a finite positive number of vertices.
 Furthermore, we let $\emptyset$ be the empty tree.
 Then the Butcher group is defined to be the set of all tree maps which map the empty tree to $1$, i.e.\
 \begin{displaymath}
  \BGp = \{a \colon \RT \cup \{ \emptyset \} \rightarrow \R \mid a (\emptyset) = 1\}.
 \end{displaymath}
 To define the group operation of the Butcher group, one interprets the values of a tree map as coefficients of a (formal) power series.
 Via this identification, the composition law for power series induces a group operation on $\BGp$. We refer to Section \ref{sect: tBG-LGP} for more details and the explicit formula.
 Note that the Butcher group contains arbitrary tree maps, i.e.\ there is no restriction on the value a tree map can attain at a given tree. \medskip

 The tame Butcher group $\tBG$ is the subgroup of $\BGp$ which consists of all elements $a$ which satisfy the following exponential growth condition
 \begin{displaymath}
  \exists C,K > 0 \text{ such that } \abs{a(\tau)} \leq CK^{|\tau|} \text{for all } \tau \in \RT \text{ where } |\tau| = \# \text{ nodes of } \tau.
 \end{displaymath}
 Note that the same constructions can be performed for tree maps with values in the field of complex numbers.
 As a result, one obtains the complex tame Butcher group $\tBGC$ which will be an important tool in our investigation.

 Returning to $\tBG$, we observe that the subspace topology induced by $\BGp$ on $\tBG$ is too coarse for our needs (see also the discussion in \cite[Remark 2.3]{BS14}).
 Hence we replace it with the inductive limit topology induced by a sequence of weighted mapping spaces.
 Their inductive limit turns out to be a Silva space (also known as a (DFS)-space, cf.\ Proposition \ref{prop: silva}) and the results in Section \ref{sect: LGP} subsume the following theorem.
 \medskip

 \textbf{Theorem A} \emph{The tame Butcher group $\tBG$ is a real analytic infinite-dimensional Lie group modelled on a Silva space.}\medskip

 At this point, it is worthwhile to stress that virtually every ``interesting'' element of the Butcher group is contained in the tame Butcher group.
 In particular, all elements of the Butcher group associated to commonly used numerical integrators, e.g.\ Runge--Kutta methods, are contained in $\tBG$.\footnote{This follows from \cite[Theorem 6.8]{Butcher72}, which asserts that Butcher's group $G_0$, which contains the Runge--Kutta methods, is contained in $\tBG$.}

 Having a Lie group structure on the tame Butcher group at our disposal, we then investigate the Lie theoretic properties of this group.
 An important observation will be that the Lie group structure on the tame Butcher group is -- albeit not directly induced from the Butcher group-- closely related to the Lie structure of the Butcher group.
 To make this explicit, consider the canonical inclusion $\alpha_\R \colon \tBG \rightarrow \BGp$.
 Then $\alpha_\R$ is a morphism of Lie groups and the induced morphism of Lie algebras $\Lf (\alpha_\R)$ separates the points.
 The morphisms $\alpha_\R$ and $\Lf (\alpha_\R)$ will be our main tools to transport structure to the tame Butcher group.
 In particular, these tools allow the Lie algebra of the tame Butcher group to be identified (see Section \ref{sect: BG-LAlg}):
 \medskip

 \textbf{The Lie Algebra of $\tBG$} \emph{The Lie algebra $\Lf (\tBG)$ of the tame Butcher group is isomorphic as an abstract Lie algebra to a Lie subalgebra of the Lie algebra $\Lf (\BGp)$ of the Butcher group.}
 \medskip

 Here the term ``abstract'' Lie subalgebra points to the the fact that $\Lf (\tBG)$ is not a topological Lie subalgebra of $\Lf (\BGp)$.
 However, as its natural Silva space topology is strictly finer than the subspace topology, the canonical inclusion becomes a morphism of topological Lie algebras.
 Moreover, the Lie bracket of the Lie algebra of the Butcher group restricts to the continuous Lie bracket of the Lie algebra of the tame Butcher group.

 Then we establish that the tame Butcher group is a regular Lie group.
 To understand these results first recall the notion of regularity for Lie groups.

  Let $G$ be a Lie group modelled on a locally convex space, with identity element $\one$, and
 $r\in \N_0\cup\{\infty\}$. We use the tangent map of the right translation
 $\rho_g\colon G\to G$, $x\mapsto xg$ by $g\in G$ to define
 $v.g\coloneq T_{\one} \rho_g(v) \in T_g G$ for $v\in T_{\one} (G) =: \Lf(G)$.
 Following \cite{HG15reg}, $G$ is called
 \emph{$C^r$-semiregular} if for each $C^r$-curve
 $\gamma\colon [0,1]\rightarrow \Lf(G)$ the initial value problem
 \begin{displaymath}
  \begin{cases}
   \eta'(t)&= \gamma(t).\eta(t)\\ \eta(0) &= \one
  \end{cases}
 \end{displaymath}
 has a (necessarily unique) $C^{r+1}$-solution
 $\Evol (\gamma)\coloneq\eta\colon [0,1]\rightarrow G$.
 Furthermore, if $G$ is $C^r$-semiregular and the map
 \begin{displaymath}
  \evol \colon C^r([0,1],\Lf(G))\rightarrow G,\quad \gamma\mapsto \Evol
  (\gamma)(1)
 \end{displaymath}
 is smooth, then $G$ is called \emph{$C^r$-regular}.
 If $G$ is $C^r$-regular and $r\leq s$, then $G$ is also
 $C^s$-regular. A $C^\infty$-regular Lie group $G$ is called \emph{regular}
 \emph{(in the sense of Milnor}) -- a property first defined in \cite{milnor1983}.
 Every finite dimensional Lie group is $C^0$-regular (cf. \cite{neeb2006}). Several
 important results in infinite-dimensional Lie theory are only available for
 regular Lie groups (see
 \cite{milnor1983,neeb2006,HG15reg}, cf.\ also \cite{KM97} and the references therein).

 Using recent results of Gl\"{o}ckner on regularity of Silva Lie groups (see \cite[Section 15]{HG15reg}), we can then prove the following theorem.
 \medskip

 \textbf{Theorem B} \emph{The tame Butcher group is $C^1$-regular and $C^0$-semiregular. The evolution map
  \begin{displaymath}
   \evol \colon C^1 ([0,1] , \tBG) \rightarrow \tBG ,\quad \eta \mapsto \Evol (\eta) (1)
  \end{displaymath}
 is even real analytic.
 }\medskip

Recall from \cite{milnor1983} that regular Lie groups admit a smooth Lie group exponential function.
For the (full) Butcher group $\BGp$ it is even known that the Lie group exponential map is a diffeomorphism, i.e.\ the Lie group $\BGp$ is exponential (cf.\ \cite[Section 5]{BS14}).
Contrary to the full Butcher group, the tame Butcher group is neither exponential nor even locally exponential.
In Section \ref{sect: nonexp}, we show that for each $\ve \neq 0$ the element of $\tBGC$ given by
\begin{displaymath}
   a_{\ve}(\tau) = \begin{cases}
   1, & \text{if $\tau= \emptyset$,} \\
   \ve,   & \text{if $\tau=\onenode$ (the one-node tree),} \\
   0 & \text{else,}
   \end{cases}
   \end{displaymath}
can not be contained in the image of the Lie group exponential map.
We note that the elements $a_\ve$ correspond to well known numerical integrators.
Namely, $a_1$ is the element associated to the forward Euler method, and in general $a_{\ve}$ corresponds to forward Euler with an adjusted step size.
As $\ve$ was arbitrary, this yields a curve through the identity in $\tBG$ whose intersection with the image of the exponential map is just the identity.
Hence $\tBG$ is not even locally exponential.
\medskip

From the point of view of numerical analysis, the failure of $\tBG$ to be exponential is reflected in the fact that the so-called ``modified vector fields'' of B-series methods are in general nonconvergent formal series that have to be truncated.
\medskip

In the last chapter we construct Lie group morphisms from the tame Butcher group to a class of Lie groups of germs of diffeomorphisms.
Consider a real Banach space $E$ and recall from \cite{MR2670675} that the group $\DiffGerm (\{(y_0,0)\}, E \times \R)$ of germs of real analytic diffeomorphisms which fix the point $(y_0,0)$ in $E \times \R$ is a Lie group.
For finite-dimensional $E$, the construction goes back to \cite{hg2007} (compare also \cite{MR1828283} and \cite{MR1292806}).
The Lie group morphisms envisaged here take  an element of the tame Butcher group to its the associated B-series with respect to a suitable analytic vector field.
As a consequence of estimates from backward error analysis in \cite{HL1997}, one obtains the following :\medskip

\textbf{Construction principle} \emph{Let $f \colon E \supseteq U \rightarrow E$ be a real analytic map defined on an open set of a Banach space $E$. Fix $y_0 \in U$.
Then the B-series of $a \in \tBG$ with respect to $f$ is defined as $B_f (a,y,h) \coloneq y + \sum_{\tau \in \RT} \frac{h^{|\tau|}a(\tau)}{\sigma (\tau)} F_f (\tau) (y)$ and}
  \begin{displaymath}
   B_f \colon \tBG \rightarrow \DiffGerm (\{(y_0,0)\}, E \times \R) ,\quad  a \mapsto [(y,h) \mapsto (B_f (a,y,h),h)]_\sim
  \end{displaymath}
 \emph{ is a real analytic Lie group antimorphism.} \medskip

Note that we do not obtain a morphism of Lie groups because our definition for multiplication follows the conventions in the literature (which constructs the product as composition in the opposite order).
The map $B_f$ can be interpreted as the map which associates to a numerical method the numerical solution of an ordinary differential equation, i.e.\ the germ represents a (numerical approximation to a) power series solution of the ordinary differential equation
  \begin{displaymath}(\star) \begin{cases}
              y'(t) &=f(y),\\
              y(0) &= y_0
     \end{cases}
 \end{displaymath}
 near $y_0$.
 Hence the map $B_f$ corresponds to the construction of the numerical solutions to $(\star)$.
 This construction incorporates information on the convergence of the solution and specialises the construction of the formal Butcher series (which can also be seen as a certain Lie group antimorphism, cf.\ Proposition \ref{prop: formalmor}).

In general, the behaviour of the antimorphism $B_f$ depends strongly on the choice of $f$. E.g.\ for the zero-map $0$ on some Banach space, $B_0$ is the trivial group morphism.
Moreover, the parameter $h$ introduced in the construction principle is usually interpreted as the step-size of the method in numerical analysis.
In particular, in concrete applications this parameter is assumed to be a positive number (which is often fixed).
However, to obtain a Lie group antimorphism, one cannot fix $h$ and has to allow $h$ to take more general values.

\section{Preliminaries on the (tame) Butcher group}\label{sect: tBG-LGP}

In this section we construct the tame Butcher group and discuss its topology.
The tame Butcher group is a subgroup of the Butcher group, thus we will first review the construction of the Butcher group.

\begin{nota}
 We write $\N \coloneq \{1,2,\ldots\}$, respectively $\N_0 \coloneq \N \cup \{0\}$. As usual $\R$ and $\C$ denote the fields of real and complex numbers, respectively.
 \end{nota}

\section*{The Butcher group}

We recommend \cite{HLW2006,CHV2010} for an overview of basic results and algebraic properties of the Butcher group.
Let us now establish some notation.

 \begin{nota}
 \begin{enumerate}
  \item A \emph{rooted tree} is a connected \emph{finite} graph without cycles with a distinguished node called the \emph{root}.
  Identify rooted trees if they are graph isomorphic via a root preserving isomorphism.

  Let $\RT$ be \emph{the set of all rooted trees} and write $\RT_0 := \RT \cup \{\emptyset\}$ where $\emptyset$ denotes the empty tree.
  The \emph{order} $|\tau|$ of a tree $\tau \in \RT_0$ is its number of vertices.
  \item An \emph{ordered subtree}\footnote{The term ``ordered'' refers to that the subtree remembers from which part of the tree it was cut.} of $\tau \in \RT_0$ is a subset $s$ of all vertices of $\tau$ which satisfies
    \begin{compactitem}
     \item[(i)] $s$ is connected by edges of the tree $\tau$,
     \item[(ii)] if $s$ is non-empty, it contains the root of $\tau$.
    \end{compactitem}
   The set of all ordered subtrees of $\tau$ is denoted by $\OST (\tau)$.
   Further, $s_\tau$ denotes the tree given by vertices of $s$ with root and edges induced by $\tau$.
  \item A \emph{partition} $p$ of a tree $\tau \in \RT_0$ is a subset of edges of the tree.
 We denote by $\pP (\tau)$ the set of all partitions of $\tau$ (including the empty partition).
 \end{enumerate}\label{nota: OST:P}
  Associated to $s \in \OST (\tau)$ is a forest $\tau \setminus s$ (collection of rooted trees) obtained from $\tau$ by removing the subtree $s$ and its adjacent edges.
  Similarly, to a partition $p \in \pP (\tau)$ a forest $\tau \setminus p$ is associated as the forest that remains when the edges of $p$ are removed from the tree $\tau$.
  In either case, we let $\# \tau \setminus p$ be the number of trees in the forest.
\end{nota}

\begin{defn}[Butcher group]
 Define the \emph{complex Butcher group} as the set of all tree maps
  \begin{displaymath}
   \BGC = \{a \colon \RT_0 \rightarrow \C \mid a(\emptyset) = 1\}
  \end{displaymath}
 together with the group multiplication
 \begin{equation}\label{eq: BGP:mult}
  a\cdot b (\tau) \coloneq \sum_{s \in \OST (\tau)} b(s_\tau) a(\tau \setminus s) \quad \text{with} \quad a(\tau \setminus s) \coloneq \prod_{\theta \in \tau \setminus s} a (\theta)
 \end{equation}
 and inversion (cf.\  \cite{CHV2010})
 \begin{equation}\label{eq: BGP:invers}
   a^{-1} (\tau) = \sum_{p \in \pP (\tau)} (-1)^{\# \tau \setminus p} a (\tau \setminus p) \quad \text{with } a(\tau \setminus p) \coloneq \prod_{\theta \in \tau \setminus p} a (\theta).
  \end{equation}
 The identity element $e \in \BGp$ with respect to this group structure is
  \begin{displaymath}
   e \colon \RT_0 \rightarrow \C, \ e (\emptyset) = 1 , \ e(\tau) = 0, \ \forall \tau \in \RT.
  \end{displaymath}
  We define the \emph{(real) Butcher group} as the real subgroup
  \begin{displaymath}
   \BGp = \{a \in \BGC \mid \im \, a \subseteq \R\}
  \end{displaymath}
  of $\BGC$.
  As the real Butcher group is referred to in the literature as ``the Butcher group'', we shall always denote by ``the Butcher group'' the real Butcher group.
 \end{defn}

\begin{rem}\label{rem: pwtop}
 The vector space of all tree maps $\K^{\RT_0}$ becomes a locally convex space with the \emph{topology of pointwise convergence}.
 This topology is induced by the semi-norms
  \begin{displaymath}
   p_\tau \colon \K^{\RT_0} \rightarrow \K , a \mapsto |a(\tau)|, \quad \text{for } \tau \in \RT_0.
  \end{displaymath}
 Recall that a mapping $f \colon X \rightarrow \K^{\RT_0}$ is continuous if and only if $p_\tau \circ f$ is continuous for all $\tau \in \RT_0$, i.e.\ if and only if the composition of $f$ with each evaluation map
  \begin{displaymath}
   \ev_\tau \colon \K^{\RT_0} \rightarrow \K , a \mapsto a(\tau)
  \end{displaymath}
 is continuous. Furthermore, we remark that the (complex) Butcher group is the closed affine subspace $e + \K^{\RT}$ of $\K^{\RT_0}$ (where $\K^{\RT}$ has been identified with $\{a \in \K^{\RT_0} \mid a(\emptyset)=0\}$).
\end{rem}

\begin{prop}[{{\cite[Theorem 2.1]{BS14}}}]\label{prop: BGP:Lie}
 Identify the (complex) Butcher group with the affine subspace $e +\K^{\RT}$ of $\K^{\RT_0}$ with the topology of pointwise convergence.
 With respect to this topology and submanifold structure, the (complex) Butcher group turns into a $\K$-analytic Baker--Campbell--Hausdorff-Lie group.
\end{prop}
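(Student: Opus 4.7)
The plan is to exploit the fact that $\K^{\RT_0}$ is a countable product of copies of $\K$, hence a \Frechet space with the product topology, which on $\K^{\RT_0}$ coincides with the topology of pointwise convergence. The closed affine subspace $e + \K^{\RT}$ carries a canonical global chart: translation by $-e$ identifies it with the \Frechet space $\K^{\RT}$. Taking this chart as a one-chart analytic atlas equips $\BGp$ (resp.\ $\BGC$) with the structure of a $\K$-analytic manifold modelled on $\K^{\RT}$. Since the affine structure is compatible with translation, left and right translations are automatically $\K$-analytic as soon as multiplication is.

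To verify that multiplication $\mu\colon \BGp \times \BGp \to \BGp$ and inversion $\iota \colon \BGp \to \BGp$ are $\K$-analytic, I would use the characterisation recalled in Remark \ref{rem: pwtop}: a map into $\K^{\RT_0}$ is continuous (resp.\ analytic) if and only if its composition with every evaluation $\ev_\tau$ is. Post-composing the formula \eqref{eq: BGP:mult} with $\ev_\tau$ yields
\begin{displaymath}
 \ev_\tau \circ \mu (a,b) = \sum_{s \in \OST(\tau)} b(s_\tau) \prod_{\theta \in \tau \setminus s} a(\theta).
\end{displaymath}
Since $\OST(\tau)$ is finite and each tree $\theta$ appearing in a forest $\tau \setminus s$ has order at most $|\tau|$, this is a polynomial in finitely many of the coordinate functionals $\ev_{\theta'}$; hence $\ev_\tau \circ \mu$ is $\K$-analytic. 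An identical argument, using \eqref{eq: BGP:invers} and the finiteness of $\pP(\tau)$, shows analyticity of each $\ev_\tau \circ \iota$.

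For the Baker--Campbell--Hausdorff property I would argue via a filtration by tree order. Writing $\Lf(\BGp) \cong \K^{\RT}$ for the tangent space at $e$, one computes the Lie bracket $[a,b](\tau)$ by differentiating the conjugation twice. The key observation is that this bracket is \emph{pro-nilpotent}: because each ordered subtree $s \subsetneq \tau$ in \eqref{eq: BGP:mult} has $|s_\tau| < |\tau|$, the value $[a,b](\tau)$ depends only on the restrictions of $a$ and $b$ to trees of order strictly less than $|\tau|$. Consequently, for each fixed $\tau$, only finitely many iterated brackets contribute to $\ev_\tau$ of the formal BCH series $a * b = a + b + \tfrac{1}{2}[a,b] + \cdots$. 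Thus the BCH series converges coordinate-wise to a $\K$-analytic map $\K^{\RT} \times \K^{\RT} \to \K^{\RT}$. Finally, truncating to the finite subset $\setm{\tau \in \RT}{|\tau|\leq n}$ exhibits each quotient as a finite-dimensional nilpotent Lie group for which BCH describes multiplication classically; passing to the projective limit identifies the BCH series with the multiplication $\mu$ near $e$, establishing the BCH property.

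The main obstacle is the last point: justifying that the Lie bracket defined abstractly from the group agrees with the explicit combinatorial bracket and that its pro-nilpotent structure genuinely forces termination of the BCH series on each coordinate. I expect the cleanest route is to realise $\BGp$ as the projective limit of the nilpotent truncations $\BGp_{\leq n}$ (a quotient corresponding to trees of bounded order), check that $\mu$ and $\iota$ descend compatibly, and then transfer the finite-dimensional BCH identity through the limit using that analyticity into $\K^{\RT_0}$ reduces to finite-dimensional analyticity by the pointwise-convergence characterisation.
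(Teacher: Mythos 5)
This proposition is quoted from \cite[Theorem 2.1]{BS14} and the present paper gives no proof of it, so there is no internal argument to compare against; judged on its own terms and against the structural facts the paper does record, your proposal is sound and follows what is essentially the intended route: a single global affine chart on $e+\K^{\RT}$, componentwise polynomiality of \eqref{eq: BGP:mult} and \eqref{eq: BGP:invers} in finitely many evaluations $\ev_\theta$ (which suffices for $\K$-analyticity into the product $\K^{\RT_0}$, since $C^r$-maps into products are exactly the componentwise $C^r$-maps), and the filtration by tree order --- the same filtration that reappears in the paper as the normal subgroups of Proposition~\ref{prop: clnorm:sbgp} and in the identification of $\BGC$ with the projective limit of the finite-dimensional groups $G^{\leq n}$; the bracket you predict is the one recorded in Theorem~\ref{thm: tameBG:LA}. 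Two points deserve tightening. First, the ordered subtrees $s=\emptyset$ and $s=\tau$ contribute the linear terms $a(\tau)+b(\tau)$ to \eqref{eq: BGP:mult}, and $\tau\setminus\emptyset$ is the forest $\{\tau\}$ of full order; it is only after these linear terms cancel in the commutator that $[\an,\bn](\tau)$ depends solely on trees of order $<|\tau|$, so the justification should run over the \emph{non-trivial splittings} $\SP{\tau}_1$ rather than over all proper ordered subtrees. Second, to meet the definition of a BCH--Lie group you should make explicit that each $G^{\leq n}$ is a simply connected unipotent group whose exponential is a polynomial bijection with polynomial inverse, so that the limit $\exp_{\BGC}\colon \Lf(\BGC)\rightarrow\BGC$ is a globally defined $\K$-analytic diffeomorphism (each coordinate being a finite polynomial in lower-order coordinates); combined with the coordinatewise termination of the BCH series this yields both local exponentiality and the analyticity of the BCH multiplication, completing the argument.
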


 \section*{The tame Butcher group}
Elements in the Butcher group $\BGp$ are closely related to a certain kind of formal power series, the so-called Butcher series (or B-series for short).
We will now briefly recall the necessary concepts from the theory of Butcher series (cf.\ \cite[III.1]{HLW2006}).

\begin{defn}[Elementary differentials and B-series]
\begin{enumerate}
 \item Consider a finite family of trees $\tau_1 , \tau_2, \ldots , \tau_m \in \RT$.
 We define a new tree $\tau \coloneq [\tau_1, \ldots , \tau_m]$ by grafting the roots of the trees $\tau_i$ to a common new vertex which we define as the root of $\tau$.
 \item Define recursively the \emph{symmetry coefficient} $\sigma (\tau)$ of a tree $\tau \in \RT$ via $\sigma (\bullet) =1$ and, for $\tau = [\tau_1, \ldots, \tau_m]$
  \begin{displaymath}
   \sigma (\tau) = \sigma (\tau_1) \cdot \ldots \cdot \sigma (\tau_m) \cdot \mu_1!\mu_2! \ldots ,
  \end{displaymath}
 where the integers $\mu_1, \ldots , \mu_m$ count equal trees among $\tau_1, \ldots , \tau_m$.
 \item Let now $f \colon E \supseteq U \rightarrow E$ be a smooth mapping defined on an open subset of the normed space $(E,\norm{\cdot})$. Then we define for $\tau \in \RT$ recursively the \emph{elementary differential} $F_f (\tau) \colon U \rightarrow E$ via
 $F_f(\bullet) (y) = f(y)$ and
  \begin{displaymath}
   F_f(\tau)(y) \coloneq f^{(m)} (y) (F_f (\tau_1)(y), \ldots F_f (\tau_m)(y)) \quad \text{ for } \tau = [\tau_1, \ldots , \tau_m]
  \end{displaymath}
 where $f^{(m)}$ denotes the $m$th \Frechet derivative.\footnote{Recall from \cite{MR1853240} that smooth mappings in our sense on normed spaces are smooth in the sense of \Frechet-differentiability.}

 Now we can define for $a \in \BGp$, $y \in U$ and $h \in \R$ a formal series
  \begin{displaymath}
   B_f (a,y,h) \coloneq y + \sum_{\tau \in \RT} \frac{h^{|\tau|}}{\sigma (\tau)}a(\tau) F_f (\tau)(y),
  \end{displaymath}
 called \emph{B-series}.
\end{enumerate}\label{defn: Bseries}
Note that usually the map $f$ is fixed in advance and thus the dependence on $f$ is suppressed in the notation of elementary differentials and in the B-series.
\end{defn}

\begin{rem}
 In the numerical analysis literature the parameter $h$ is interpreted as step-size of a numerical scheme.
 Hence it is usually assumed to be positive.
 Due to technical reasons which will become apparent later, we allow arbitrary $h \in \K$.
\end{rem}

In application one is interested in those B-series which converge at least locally around a given point.
However, the B-series of many elements in the Butcher group do not converge locally.
With respect to the Lie group topology on $\BGp$ (see Proposition \ref{prop: BGP:Lie}) these elements are even dense in $\BGp$.
If one restricts attention to elements which satisfy a certain exponential growth restriction, one can force the B-series to converge at least for analytic vector fields.

\begin{prop}\label{prop: convergence}
Fix a complex analytic map $f\colon E \supseteq U \to E$ on an open $y_0$-neighbourhood in a complex Banach space $(E,\norm{\cdot})$.
Let $a$ be an element of $\BGp$ such that there are $C,K >0$ with $|a(\tau)|\le CK^{|\tau|}$ for all $\tau \in \RT$.
Then there is a $y_0$-neighbourhood $V \subseteq U$ and a constant $h_0 \coloneq h_0 (f,V,K) >0$ such that the B-series
\begin{equation}
\label{eq: Bseries}
 B_f (a,y,h) \coloneq  y+\sum_{\tau \in \RT} \frac{h^{|\tau|}a(\tau)}{\sigma(\tau)} F_f(\tau)(y)
\end{equation}
converges for all $\abs{h} \leq h_0$ and $y \in V$.
\end{prop}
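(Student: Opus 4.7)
The plan is to reduce the vector-valued series to a scalar one via a Cauchy-type majorant. Since $f$ is complex analytic, choose $R>0$ with $\overline{B(y_0,R)}\subseteq U$, set $M\coloneq \sup_{\|z-y_0\|\leq R}\|f(z)\|$, let $\rho\coloneq R/2$ and put $V\coloneq B(y_0,\rho)$. Cauchy's integral formula in Banach spaces yields the operator-norm bound
\begin{equation*}
\|f^{(m)}(y)\|_{\text{op}}\;\leq\;\frac{M\cdot m!}{\rho^m}\qquad(y\in V,\ m\in\N_0).
\end{equation*}
Introduce the scalar majorant $\phi\colon(-\rho,\rho)\to\R$, $\phi(u)=M/(1-u/\rho)$, whose Taylor coefficients at $0$ are exactly $\phi^{(m)}(0)=Mm!/\rho^m$; the scalar IVP $u'=\phi(u)$, $u(0)=0$, has the explicit solution $\Psi(t)=\rho\bigl(1-\sqrt{1-2Mt/\rho}\bigr)$, analytic on $|t|<h_\star\coloneq \rho/(2M)$.

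Next I would prove by induction on $|\tau|$ the pointwise dominance
\begin{equation*}
\|F_f(\tau)(y)\|\;\leq\;F_\phi(\tau)(0)\qquad(\tau\in\RT,\ y\in V).
\end{equation*}
The base case $\tau=\onenode$ is $\|f(y)\|\leq M=\phi(0)$. For $\tau=[\tau_1,\ldots,\tau_m]$ the recursive definition of the elementary differential, submultiplicativity of the operator norm, the Cauchy bound on $\|f^{(m)}(y)\|_{\text{op}}$, and the one-dimensional identity $F_\phi(\tau)(0)=\phi^{(m)}(0)\prod_i F_\phi(\tau_i)(0)$ combine to give the induction step.

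Now I would invoke the classical fact that, for the scalar IVP above, the B-series of the identity element $\one(\tau)\equiv 1$ coincides with the Taylor expansion of $\Psi$ at $0$, i.e.\
\begin{equation*}
\Psi(t)\;=\;\sum_{\tau\in\RT}\frac{t^{|\tau|}}{\sigma(\tau)}\,F_\phi(\tau)(0)
\end{equation*}
with radius of convergence at least $h_\star$. (If one prefers to avoid appealing to this identity, it can be re-proved here by differentiating the explicit $\Psi$ and unwinding the recursion using Faà di Bruno.) Finally, set $h_0\coloneq h_\star/(2K)$. For $|h|\leq h_0$, $y\in V$ and $a\in\BGp$ with $|a(\tau)|\leq CK^{|\tau|}$, the termwise dominance from the previous step yields
\begin{equation*}
\sum_{\tau\in\RT}\frac{|h|^{|\tau|}\,|a(\tau)|}{\sigma(\tau)}\,\|F_f(\tau)(y)\|\;\leq\;C\sum_{\tau\in\RT}\frac{(K|h|)^{|\tau|}}{\sigma(\tau)}\,F_\phi(\tau)(0)\;\leq\;C\,\Psi(K|h|)\;<\;\infty,
\end{equation*}
so the B-series \eqref{eq: Bseries} converges absolutely and uniformly on $V\times\overline{\D_{h_0}}$, as required.

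The only genuinely delicate step is the inductive majorant estimate on $\|F_f(\tau)(y)\|$: one must take care that the Cauchy bound is formulated as a norm on symmetric multilinear maps (so that plugging in the $F_f(\tau_i)$ really produces the required product of norms), and that the chosen shrinkage $\rho=R/2$ is large enough to absorb the loss. Identifying $\Psi$ with the scalar B-series is an easy check once the recursion is in place, and combining the two estimates with the exponential-growth hypothesis on $a$ is then immediate.
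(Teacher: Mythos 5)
There is a genuine gap at the heart of the argument: the identification of the scalar majorant series with the solution $\Psi$ of the IVP $u'=\phi(u)$, $u(0)=0$. In the normalization used in this paper, $B_f(a,y,h)=y+\sum_{\tau\in\RT}\frac{h^{|\tau|}a(\tau)}{\sigma(\tau)}F_f(\tau)(y)$, the Taylor expansion of the \emph{exact solution} is the B-series of $a(\tau)=1/\gamma(\tau)$ (with $\gamma$ the tree factorial), \emph{not} of $a\equiv 1$; i.e.\ $\Psi(t)=\sum_{\tau}\frac{t^{|\tau|}}{\sigma(\tau)\gamma(\tau)}F_\phi(\tau)(0)$. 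Since $F_\phi(\tau)(0)\geq 0$ and $\gamma(\tau)\geq 1$, your final inequality $\sum_{\tau}\frac{(K|h|)^{|\tau|}}{\sigma(\tau)}F_\phi(\tau)(0)\leq C\,\Psi(K|h|)$ actually runs in the \emph{wrong direction}, and because $\gamma(\tau)$ can be as large as $|\tau|!$ (tall trees), no geometric rescaling of $h$ repairs it. (The confusion is natural: in Hairer--Lubich the coefficients are scaled by $\gamma(\tau)$, so there the exact flow is $a\equiv 1$; the paper's proof explicitly flags this rescaling.) The good news is that your strategy survives with the correct generating function: grouping trees by their root decomposition $\tau=[\tau_1,\dots,\tau_m]$ shows that $G(t)\coloneq\sum_{\tau}\frac{t^{|\tau|}}{\sigma(\tau)}F_\phi(\tau)(0)$ satisfies the \emph{functional} equation $G(t)=t\,\phi(G(t))$ rather than an ODE; for $\phi(u)=M/(1-u/\rho)$ this gives $G(t)=\tfrac{\rho}{2}\bigl(1-\sqrt{1-4Mt/\rho}\bigr)$, analytic with nonnegative coefficients for $|t|<\rho/(4M)$, which is exactly the geometric bound (note the $4$, matching the paper's $h_0<R/(4eMK)$).

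A secondary, purely quantitative slip: the Cauchy bound $\opnorm{f^{(m)}(y)}\leq Mm!/\rho^m$ with $\rho=R/2$ is not valid for the \emph{multilinear} operator norm on a general Banach space; polarization costs a factor $m^m/m!\leq e^m$, so the correct bound is $Mm!(2e/R)^m$ (this is why the paper's estimate, via Dahmen's Lemma~1.5, carries the factor $e^k$). This is fixed by taking $\rho=R/(2e)$ and only changes constants. For comparison, the paper's proof takes the same first step (Cauchy estimates) but then outsources the combinatorial sum over trees of fixed order to \cite[Lemma 9]{HL1997}; your majorant/generating-function route would make that step self-contained, but as written the key identity is false and the concluding estimate does not follow.
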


\begin{proof}
 Choose $R > 0$ small enough, such that the ball $B_R^E (y)$ is contained in $U$ and there is a constant $M>0$ with $\sup_{z \in \overline{B_R^E (y)}}\norm{f(z)} \leq M$.
 This is possible by continuity of $f$.
 Now we want to apply \cite[Lemma 1.5]{MR2670675} to obtain a Cauchy estimate for the \Frechet derivatives of $f$ on $B_{\frac{R}{2}}^E (y_0)$.
 Since $f$ is bounded by $M$ on the $R$-ball around $y_0$ we deduce from the proof of loc.cit. (see \cite[p.118]{MR2670675}) for all $y \in B_{\frac{R}{2}}^E (y_0)$ the estimates
 \begin{equation}\label{est: Cauchy}
  \opnorm{f^{(k)} (y)} \leq k! M \left(\frac{e}{R}\right)^k \quad \text{for all } k \in \N_0 \text{ where } e = 2.718281828\ldots
 \end{equation}
 We can now argue as in \cite[Lemma 9]{HL1997}. Under equivalent assumptions as ours\footnote{loc.cit. has the condition $|a(\tau)|\le \gamma(\tau) \mu \kappa^{|\tau|-1}$. The factor $\gamma(\tau)$ is due to a different scaling of the tree coefficients.}, the estimate \eqref{est: Cauchy} yields the bound
 \begin{equation}\label{est: Bseries}
  \norm{\sum_{\substack{\tau \in \RT \\ |\tau| = q}} \frac{h^{|\tau|}a(\tau)}{\sigma(\tau)} F_f(\tau)(y)} \le \frac 12 C \left(\frac{4eMK\abs{h}}{R}\right)^q.
 \end{equation}
 Note that in \cite{HL1997} only the case of a finite-dimensional space is considered and the computations are carried out with respect to the $\ell^1$-norm.
 However, the necessary estimate from the proof of \cite[Lemma 9]{HL1997} carry over verbatim to our setting if we use \eqref{est: Cauchy}.
 Now we fix arbitrary $0 <  h_0 (f,V,K) < \frac{R}{4eMK}$. Then \eqref{est: Bseries} ensures convergence of \eqref{eq: Bseries} when $y$ is contained in $V \coloneq B_{\frac{R}{2}}^E (0)$ and $\abs{h}\leq h_0$ .
\end{proof}

We single out all elements of the Butcher group which satisfy an exponential growth restrictions to construct a Lie group which is amenable for applications in numerical analysis.

\begin{defn}
 The \emph{(complex) tame Butcher group} $\tBGC$ is the subgroup of all elements in $a \in \BGC$ which satisfy the exponential growth restriction
 \begin{displaymath}
 \quad \exists C , K >0 \text{ such that for all } \tau \in \RT \quad |a(\tau)| \leq CK^{|\tau|}
 \end{displaymath}
 (we will see in Lemma \ref{lem: tame:subgp} below that $\tBGC$ is indeed a subgroup of $\BGC$).

 Moreover, define the \emph{(real) tame Butcher group} $\tBG$ via $\tBG = \tBGC \cap \BGp$ (which is a subgroup of $\BGp$ by the above).
 In the following, we will always write ``the tame Butcher group'' for the real tame Butcher group.
\end{defn}

\begin{lem}\label{lem: tame:subgp}
 The group operations of $\BGC$ turn $\tBGC$ into a subgroup. Thus also $\tBG$ becomes a subgroup of $\BGp$.
\end{lem}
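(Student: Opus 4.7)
The plan is to verify closure of $\tBGC$ under the multiplication \eqref{eq: BGP:mult} and inversion \eqref{eq: BGP:invers}; once this is done, the analogous statement for $\tBG$ follows immediately because $\tBG = \tBGC \cap \BGp$ is the intersection of two subgroups.

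For the multiplication, fix $a,b \in \tBGC$ with constants $C_a, C_b, K_a, K_b > 0$ satisfying $|a(\tau)|\le C_a K_a^{|\tau|}$ and $|b(\tau)|\le C_b K_b^{|\tau|}$ for all $\tau \in \RT$. Without loss of generality replace $C_a$ by $\max(C_a,1)$, so that $C_a \ge 1$. For any $s \in \OST(\tau)$ the forest $\tau \setminus s$ consists of at most $|\tau|$ trees (since each has at least one vertex), and the sum of their orders equals $|\tau| - |s_\tau|$. Hence
\[
 \bigl|a(\tau\setminus s)\bigr| \;=\; \prod_{\theta \in \tau \setminus s}\bigl|a(\theta)\bigr| \;\le\; C_a^{\,|\tau|}\, K_a^{\,|\tau|-|s_\tau|}.
\]
Combined with $|b(s_\tau)|\le C_b K_b^{|s_\tau|}$ and the crude combinatorial bound $\#\OST(\tau)\le 2^{|\tau|}$ (an ordered subtree is determined by a subset of the vertices of $\tau$), \eqref{eq: BGP:mult} yields
\[
 \bigl|a\cdot b(\tau)\bigr| \;\le\; \sum_{s\in\OST(\tau)} C_b K_b^{|s_\tau|}\, C_a^{\,|\tau|}\,K_a^{\,|\tau|-|s_\tau|} \;\le\; C_b\,\bigl(2\,C_a\,\max(K_a,K_b)\bigr)^{|\tau|},
\]
which is the required exponential estimate for $a\cdot b$.

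For the inversion one argues completely analogously, this time based on the fact that a partition $p\in \pP(\tau)$ is a subset of edges of $\tau$, giving $\#\pP(\tau)\le 2^{|\tau|-1}$, and that every forest $\tau\setminus p$ has total order $|\tau|$ and at most $|\tau|$ components. As above, after replacing $C_a$ by $\max(C_a,1)$, one obtains $|a(\tau\setminus p)|\le C_a^{|\tau|}K_a^{|\tau|}$; summing over partitions and absorbing the sign then gives $|a^{-1}(\tau)|\le (2\,C_a\,K_a)^{|\tau|}$ via \eqref{eq: BGP:invers}, so $a^{-1}\in\tBGC$ as well.

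The only non-routine ingredient is the combinatorial counting of ordered subtrees and partitions, but the trivial bounds $\#\OST(\tau)\le 2^{|\tau|}$ and $\#\pP(\tau)\le 2^{|\tau|-1}$ are more than sufficient; the real subtlety—easy to miss—is that $C_a^{\#\tau\setminus s}$ is not bounded by a constant but must be absorbed into the base of the exponential, which is why we need the normalisation $C_a\ge 1$ so as to dominate it by $C_a^{|\tau|}$. With that technicality accounted for, both closure statements fall out directly, and the subgroup assertion for $\tBG$ is immediate.
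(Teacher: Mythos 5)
Your proof is correct and follows essentially the same route as the paper: normalise the constants so that $C_a,C_b\ge 1$, use multiplicativity of the order over the forests together with the crude bounds $\#\OST(\tau)\le 2^{|\tau|}$ and $\#\pP(\tau)\le 2^{|\tau|}$, and absorb the factor $C_a^{\#\tau\setminus s}\le C_a^{|\tau|}$ into the base of the exponential. The only cosmetic differences are that the paper fixes a common $K$ from the outset instead of taking $\max(K_a,K_b)$ at the end, and uses $2^{|\tau|}$ rather than your sharper $2^{|\tau|-1}$ for partitions; neither affects the argument.
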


\begin{proof}
 Let $a,b$ be elements in $\tBGC$ and choose constants $C_a,C_b, K>0$ such that $|a(\tau)|\leq C_a K^{|\tau|}$ and $|b(\tau)|\leq C_b K^{|\tau|}$ hold for all $\tau \in \RT$.
 Without loss of generality, we may assume that $C_a,C_b \geq 1$.
 Now we compute exponential growth bounds for $a \cdot b$ and $a^{-1}$.
 Fix $\tau \in \RT$ and consider the product of $a$ and $b$ evaluated in $\tau$.
 \begin{equation} \begin{aligned}
  |a\cdot b (\tau)| &\stackrel{\hphantom{\eqref{eq: OST:est}}}{=} \left| \sum_{s \in \OST(\tau)} b(s_\tau) a(\tau\setminus s) \right| \leq \sum_{s \in \OST (\tau)} |b(s_\tau)| \prod_{\theta \in \tau \setminus s} |a(\theta)| \\
		    &\stackrel{\hphantom{\eqref{eq: OST:est}}}{\leq} C_b K^{|s_\tau|} \prod_{\theta \in \tau \setminus s} C_a K^{|\theta|} = \sum_{s \in \OST(\tau)} K^{|\tau|} C_b \prod_{\theta \in \tau \setminus s} C_a  \\
		    &\stackrel{\eqref{eq: OST:est}}{\leq} \sum_{s \in \OST(\tau)} K^{|\tau|} C_b (C_a)^{|\tau|} \stackrel{\eqref{eq: OST:est}}{\leq} 2^{|\tau|} C_b (C_a K)^{|\tau|} = C_b (2C_aK)^{|\tau|}
		    \end{aligned} \label{eq: prod:est}
 \end{equation}
 Thus $a \cdot b$ is again exponentially bounded, whence $a \cdot b \in \tBGC$ for all $a,b \in \tBGC$.

 Now consider the inverse evaluated at $\tau$.
 \begin{equation} \begin{aligned}
  |a^{-1} (\tau)| &= \left| \sum_{p \in \pP (\tau)} (-1)^{\# \tau \setminus p} a(\tau \setminus p)\right| \leq \sum_{p \in \pP (\tau)} \prod_{\theta \in \tau \setminus p} |a(\theta)| \\
		  &\leq \sum_{p \in \pP (\tau)} \prod_{\theta \in \tau \setminus p} C_a K^{|\theta|} \stackrel{\eqref{eq: Part:est}}{\leq} 2^{|\tau|} (C_a K)^{|\tau|} = (2C_aK)^{|\tau|}
		  \end{aligned} \label{eq: inv:est}
 \end{equation}
 Hence, $a^{-1}$ is exponentially bounded and thus $a^{-1} \in \tBGC$ if and only if $a \in \tBGC$.

 The assertion concerning the tame Butcher group now immediately follows from the definition of $\tBG$.
 \end{proof}

Our aim is now to construct a model space for the group $\tBGC$ from an inductive limit of weighted function spaces.

\begin{defn}
 Fix $\K \in \{\R,\C\}$ and denote by $\abs{\cdot}$ the usual absolute value on $\K$.
 For $\omega \colon \RT_0 \rightarrow \K$ we define the $\K$-vector space
 \begin{displaymath}
  \K^{\RT_0} (\omega) \coloneq \left\{a \in \K^{\RT_0} \middle| \sup_{\tau \in \RT_0 } |a (\tau) \omega (\tau))| < \infty\right\}.
 \end{displaymath}
We call $\omega$ a \emph{weight} and $\K^{\RT_0}(\omega)$ a \emph{weighted mapping space}.
\end{defn}

\begin{lem}\begin{enumerate}
             \item For a weight $\omega$, the weighted mapping space $\K^{\RT_0} (\omega)$ is a Banach space over $\K$ with respect to the norm
 \begin{displaymath}
  \norm{a}_{\omega} \coloneq \sup_{\tau \in \RT_0} |a (\tau) \omega (\tau)|.
 \end{displaymath}
            \end{enumerate}
Let $\omega , \omega' \colon \RT_0 \rightarrow [0,\infty[$ be weights which satisfy $\omega (\tau) \geq \omega'(\tau)$ for all $\tau \in \RT_0$.
Then
 \begin{enumerate}
  \item[{\upshape (b)}] $\norm{a}_{\omega} \leq \norm{a}_{\omega'}$ for all $a \in \K^{\RT_0} (\omega)$.
  Hence $\K^{\RT_0} (\omega) \subseteq \K^{\RT_0} (\omega')$ and the inclusion is even continuous.
  \item[{\upshape (c)}] If in addition $\displaystyle\lim_{|\tau| \rightarrow \infty} \frac{\omega' (\tau)}{\omega (\tau)} = 0$ holds, then the inclusion $\K^{\RT_0} (\omega) \subseteq \K^{\RT_0} (\omega')$ is a compact operator.
 \end{enumerate}\label{lem: sq:silva}
\end{lem}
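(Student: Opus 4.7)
\medskip
\noindent
\textbf{Proof plan.}
For part (a) the plan is to run the standard verification that a weighted supremum norm defines a Banach space. Concretely, I would check that $\norm{\cdot}_\omega$ is a seminorm on $\K^{\RT_0}(\omega)$ by the usual triangle-inequality and absolute-homogeneity arguments applied pointwise, and note that it separates points under the (implicit) assumption that $\omega$ does not vanish. For completeness, take a $\norm{\cdot}_\omega$-Cauchy sequence $(a_n)_{n\in\N}$; then for every fixed $\tau$ the estimate $\abs{a_n(\tau)-a_m(\tau)}\abs{\omega(\tau)}\leq \norm{a_n-a_m}_\omega$ shows that $(a_n(\tau))_n$ is Cauchy in $\K$, hence converges to some $a(\tau)$. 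A standard $\ve/2$-argument, letting $m\to\infty$ under the finite supremum $\sup_{\tau}\abs{(a_n-a_m)(\tau)\omega(\tau)}$, yields $a\in\K^{\RT_0}(\omega)$ and $a_n\to a$ in $\norm{\cdot}_\omega$.

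For part (b), since $0\le \omega'(\tau)\le \omega(\tau)$, the pointwise inequality $\abs{a(\tau)\omega'(\tau)} \leq \abs{a(\tau)\omega(\tau)}$ holds for every $\tau\in\RT_0$, and taking the supremum on both sides gives $\norm{a}_{\omega'}\leq \norm{a}_\omega$ for every $a\in\K^{\RT_0}(\omega)$. In particular, every element bounded with respect to $\omega$ is bounded with respect to $\omega'$, so $\K^{\RT_0}(\omega)\subseteq \K^{\RT_0}(\omega')$, and the displayed norm estimate is exactly the statement that the inclusion is continuous with operator norm at most $1$.

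For part (c) the key idea is to approximate the inclusion $\iota\colon \K^{\RT_0}(\omega)\to \K^{\RT_0}(\omega')$ by finite-rank operators. Since there are only finitely many $\tau\in\RT_0$ with $\abs{\tau}\le n$, the truncation
\begin{displaymath}
 \iota_n \colon \K^{\RT_0}(\omega) \to \K^{\RT_0}(\omega'),\qquad (\iota_n a)(\tau) = \begin{cases} a(\tau), & \abs{\tau}\leq n,\\ 0, & \abs{\tau} > n,\end{cases}
\end{displaymath}
has finite-dimensional image, hence is a compact operator. For $a$ in the closed unit ball of $\K^{\RT_0}(\omega)$ one has $\abs{a(\tau)}\le 1/\omega(\tau)$ whenever $\omega(\tau)>0$, so
\begin{displaymath}
 \norm{\iota(a)-\iota_n(a)}_{\omega'} = \sup_{\abs{\tau}>n} \abs{a(\tau)\omega'(\tau)} \leq \sup_{\abs{\tau}>n} \frac{\omega'(\tau)}{\omega(\tau)}.
\end{displaymath}
By hypothesis this last quantity tends to $0$ as $n\to\infty$, giving $\norm{\iota-\iota_n}_{\mathrm{op}}\to 0$. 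The space of compact operators between Banach spaces is closed in the operator-norm topology, so $\iota$ is compact.

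The only place that requires any genuine care is part (c): one must be sure that the finite-rank truncations realise the inclusion in the limit, and the hypothesis on $\omega'/\omega$ is used exactly to control the tail. Parts (a) and (b) are completely routine once the definitions are unpacked.
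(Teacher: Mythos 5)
Your proposal is correct. The paper's own proof is essentially a reduction plus citations: it observes that $\RT_0$ is countable, identifies $\K^{\RT_0}(\omega)$ with a weighted sequence space $\ell^\infty_\K(\omega)$, and then quotes Floret--Wloka for all three assertions (completeness of weighted $\ell^\infty$ spaces for (a), monotonicity of the norms for (b), and the compactness criterion for inclusions of weighted sequence spaces for (c)). You instead carry out the standard self-contained arguments that those citations encapsulate: the pointwise-Cauchy-then-uniform argument for completeness, the termwise supremum comparison for (b), and the finite-rank truncation with operator-norm convergence for (c) --- the last being precisely the usual proof of the cited Satz, and the only part with real content, since it uses that there are only finitely many rooted trees of order at most $n$ and that the tail is controlled by $\sup_{\abs{\tau}>n}\omega'(\tau)/\omega(\tau)\to 0$. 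Two small points in your favour: you correctly flag that $\norm{\cdot}_\omega$ separates points only when $\omega$ is nowhere zero (the general definition of a weight in the paper does not guarantee this, though the weights $\omega_k$ actually used are strictly positive, and in any case $\omega'\leq\omega$ makes your tail estimate valid even at zeros of $\omega$); and your inequality $\norm{a}_{\omega'}\leq\norm{a}_{\omega}$ is the one consistent with the claimed inclusion $\K^{\RT_0}(\omega)\subseteq\K^{\RT_0}(\omega')$ --- the displayed inequality in the statement of the lemma has the two norms transposed.
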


\begin{proof}
 \begin{enumerate}
  \item Recall that $\K^{\RT_0} =\prod_{\tau \in \RT_0} \K , a \mapsto (a(\tau))_{\tau \in \RT_0}$.
  Moreover, the set $\RT_0$ is countable, whence we can define the weighted sequence space
    \begin{displaymath}
     \ell^{\infty}_\K (\omega) \coloneq \left\{(a_\tau)_{\tau \in \RT_0} \in \prod_{\tau \in \RT_0} \K \ \middle| \ \sup_{\tau \in \RT_0} |a_{\tau}\omega (\tau)| < \infty\right\}.
    \end{displaymath}
  By construction we have $\K^{\RT_0} (\omega) = \ell^\infty_\K (\omega)$.
  Now $\ell^{\infty}_\K (\omega)$ is a Banach space (see \cite[\S 4 5.4]{FW68}) with respect to the norm $\norm{(a_\tau)_\tau} = \sup_{\tau \in \RT_0} \abs{a_\tau \omega (\tau)}$.
  We conclude that $\K^{\RT_0} (\omega)$ is a Banach space.
  \item[(b) and (c)] Assertion (b) follows from \cite[\S 9 4.1]{FW68} and (c) is \cite[\S 20 4.2 Satz]{FW68}. \qedhere
 \end{enumerate}
\end{proof}

\begin{setup}[Weights for the (complex) tame Butcher group]\label{setup: model:space}
 For $k \in \N$ we define the weight $\omega_k \colon \RT_0 \rightarrow [0,\infty[ , \tau \mapsto \left(\tfrac{1}{2^k}\right)^{|\tau|}$.
 By construction we have for each $k \in \N$ the relations
  \begin{displaymath}
   \omega_{k+1} (\tau ) \leq \omega_{k} (\tau) \text{ for all } \tau \in \RT_0 \text{ and } \lim_{|\tau| \rightarrow \infty} \frac{\omega_{k+1} (\tau)}{\omega_{k} (\tau)} = 0.
  \end{displaymath}
 Thus by Lemma \ref{lem: sq:silva} we obtain ascending sequences of Banach spaces
  \begin{displaymath}
   \K^{\RT_0} (\omega_1) \subseteq \K^{\RT_0} (\omega_2) \subseteq \K^{\RT_0} (\omega_3) \subseteq \cdots
  \end{displaymath}
 such that the inclusion maps are compact operators.
 Hence the (locally convex) inductive limit $\Mo{\K} = \bigcup_{k \in \N} \K^{\RT_0} (\omega_k)$ of this sequence is a Silva space\footnote{Recall that by definition, a Silva space is the inductive limit of a sequence of Banach spaces such that the bonding maps in this sequence are compact operators. Here the bonding maps are just the canonical inclusions. By \cite[Proposition 4.4. (a)]{hg2011} (or \cite{MR0287271}) Silva spaces are complete and have the Hausdorff property.}.
\end{setup}

\begin{rem} \label{rem: weights:mult}
 For $k \in \N$ the weight $\omega_k$ is multiplicative for tree partitions, i.e.\ if we partition a tree $\tau$ into a forest $\fF$ of ordered subtrees this entails $\prod_{\theta \in \fF} \omega_k (\theta) = \omega_k (\tau)$.
\end{rem}

\begin{lem}\label{lem: cI:cont}
 The canonical inclusion $I_\K \colon \Mo{\K} \rightarrow \K^{\RT_0}$ of the Silva space into $\K^{\RT_0}$ with the topology of pointwise convergence is continuous linear.
\end{lem}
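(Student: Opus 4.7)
The plan is to exploit the universal property of the locally convex inductive limit together with the characterization of continuity into $\K^{\RT_0}$ via the evaluation maps $\ev_\tau$ recalled in Remark \ref{rem: pwtop}. Linearity of $I_\K$ is immediate from the definition, so everything reduces to continuity.

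First I would recall that, since $\Mo{\K} = \bigcup_{k \in \N} \K^{\RT_0}(\omega_k)$ carries the locally convex inductive limit topology, a linear map out of $\Mo{\K}$ is continuous if and only if its restriction to each step $\K^{\RT_0}(\omega_k)$ is continuous. Hence it suffices to fix an arbitrary $k \in \N$ and show that
\begin{displaymath}
 I_\K \big|_{\K^{\RT_0}(\omega_k)} \colon \K^{\RT_0}(\omega_k) \longrightarrow \K^{\RT_0}
\end{displaymath}
is continuous, where the target carries the topology of pointwise convergence.

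Next, by Remark \ref{rem: pwtop} this restricted map is continuous if and only if $\ev_\tau \circ I_\K$ is continuous on $\K^{\RT_0}(\omega_k)$ for every $\tau \in \RT_0$. But this composition is simply the evaluation functional $a \mapsto a(\tau)$ on the Banach space $\K^{\RT_0}(\omega_k)$, and for any $a \in \K^{\RT_0}(\omega_k)$ one has the trivial bound
\begin{displaymath}
 |a(\tau)| = \frac{|a(\tau)\, \omega_k(\tau)|}{\omega_k(\tau)} \leq \frac{1}{\omega_k(\tau)} \norm{a}_{\omega_k},
\end{displaymath}
where $\omega_k(\tau) = (1/2^k)^{|\tau|} > 0$ by construction in \ref{setup: model:space}. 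This estimate exhibits $\ev_\tau \circ I_\K$ as a bounded (hence continuous) linear functional with operator norm at most $\omega_k(\tau)^{-1}$.

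There is no real obstacle here; the only point requiring care is to invoke the correct universal property of the inductive limit and to remember that continuity into $\K^{\RT_0}$ with the pointwise topology is tested tree by tree. Combining the two steps yields continuity of $I_\K|_{\K^{\RT_0}(\omega_k)}$ for every $k$, and the inductive limit property then delivers continuity of $I_\K$ on all of $\Mo{\K}$.
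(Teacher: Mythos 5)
Your argument is correct and is essentially the paper's own proof: both reduce to the steps $\K^{\RT_0}(\omega_k)$ of the inductive limit and to the individual evaluations $\ev_\tau$, and both conclude with the same estimate $|a(\tau)| \leq \omega_k(\tau)^{-1}\norm{a}_{\omega_k}$. The only (immaterial) difference is the order in which the two reductions are performed.
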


\begin{proof}
 By Remark \ref{rem: pwtop} it suffices to prove that for each $\tau \in \RT_0$ the map $\ev_\tau \circ I_\K \colon \Mo{\K} \rightarrow \K , a \mapsto a(\tau)$ is continuous.
 To see this note that by definition of a Silva space $\ev_\tau \circ I_\K$ will be continuous if $\ev_\tau \circ I_\K|_{\K^{\RT_0} (\omega_k)}$ is continuous for each $k \in \N$.
 Thus for $a \in \K^{\RT_0} (\omega_k)$ we derive from $|\ev_\tau I_\K (a)| = | a(\tau)| \leq \tfrac{1}{\omega_k (\tau)}\norm{a}_{\omega_k}$ that the map is indeed continuous.
\end{proof}

\begin{cor}\label{cor: ev:cont}
 The point evaluations $\ev_{\tau} \colon \Mo{\K} \rightarrow \K, a \mapsto a(\tau)$ are continuous linear for each $\tau \in \RT_0$.
 Moreover, the functionals $\ev_\tau , \tau \in \RT_0$ separate the points in $\Mo{\K}$ and restrict to continuous linear maps on every step of the inductive limit.
\end{cor}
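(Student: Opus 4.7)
The plan is to assemble all three assertions directly from the preceding lemma and elementary observations; none of the three should require more than a few lines of argument.

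For the continuity of each $\ev_\tau$ on the Silva space $\Mo{\K}$, I would factor $\ev_\tau = \ev_\tau \circ I_\K$, where $I_\K \colon \Mo{\K} \to \K^{\RT_0}$ is the inclusion from Lemma \ref{lem: cI:cont} and the outer $\ev_\tau$ is the usual point evaluation on $\K^{\RT_0}$. By Remark \ref{rem: pwtop}, the point evaluations on $\K^{\RT_0}$ are continuous (they are precisely the seminorms defining the topology of pointwise convergence), and $I_\K$ is continuous by Lemma \ref{lem: cI:cont}. Linearity is immediate. This handles the first claim.

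For the restriction to each step $\K^{\RT_0}(\omega_k)$, I would simply quote the inequality that appeared in the proof of Lemma \ref{lem: cI:cont}: for $a \in \K^{\RT_0}(\omega_k)$,
\begin{displaymath}
  |\ev_\tau(a)| = |a(\tau)| \leq \frac{1}{\omega_k(\tau)} \norm{a}_{\omega_k},
\end{displaymath}
which shows $\ev_\tau$ is a bounded linear functional on the Banach space $\K^{\RT_0}(\omega_k)$, hence continuous. Since this holds for every $k$, the claim follows.

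For the separation of points, I would argue directly: if $a, b \in \Mo{\K}$ with $a \neq b$, then $a$ and $b$ differ as elements of the larger space $\K^{\RT_0}$ (via the injection $I_\K$), so there is some $\tau \in \RT_0$ with $a(\tau) \neq b(\tau)$, i.e.\ $\ev_\tau(a) \neq \ev_\tau(b)$. No obstacle is expected here; the entire corollary is essentially a repackaging of Lemma \ref{lem: cI:cont} together with the defining property of tree maps, and the only thing to be slightly careful about is making explicit that continuity on the inductive limit and on each Banach step both follow from the same estimate, as this will be needed later when transporting arguments between the individual weighted spaces and the Silva space $\Mo{\K}$.
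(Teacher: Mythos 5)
Your proposal is correct and follows exactly the route the paper intends: the corollary is stated without proof precisely because it is immediate from Lemma \ref{lem: cI:cont}, whose proof already establishes the estimate $|a(\tau)| \leq \frac{1}{\omega_k(\tau)}\norm{a}_{\omega_k}$ on each step and hence continuity of $\ev_\tau = \ev_\tau \circ I_\K$ on the inductive limit. The separation-of-points argument is likewise the obvious one, since elements of $\Mo{\K}$ are themselves tree maps and so differ at some $\tau$ whenever they are distinct.
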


We are now in a position to define the model space of the (complex) tame Butcher group.
To this end consider the subspace
\begin{displaymath}
   \Msp{\K} \coloneq \{a \in \Mo{\K} \mid a (\emptyset) = 0\}
  \end{displaymath}
of $\Mo{\K}$ (see \ref{setup: model:space}).
By Lemma \ref{lem: cI:cont} the subspaces $\Msp{\K}$ is closed as the preimage of $(\ev_\emptyset \circ I_\K)^{-1} (1)$.
%Recall that closed subspaces of Silva spaces are again Silva spaces by \cite[Corollary 8.6.9]{barr87}, whence $\Msp{\K}$ is a Silva space.

\begin{lem}\label{lem: aff:subs}
The (complex) tame Butcher group can be identified with the affine closed subspace $e + \Msp{\K}$ of $\Mo{\K} = \bigcup_{k \in \N} \K^{\RT_0} (\omega_k)$ (where $e$ is the unit element in the (complex) tame Butcher group). Further, the subspace $\Msp{\K}$ is also a Silva space.
\end{lem}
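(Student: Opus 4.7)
The plan is to prove the two assertions separately: first the set-theoretic identification with the affine subspace and its closedness, then the Silva property of the modelling space.

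For the identification, I would proceed by double inclusion at the level of elements. Given $a$ in the (complex) tame Butcher group with bounds $|a(\tau)| \le CK^{|\tau|}$, define $b\coloneq a - e$. Then $b(\emptyset) = 0$ and, choosing any integer $k\ge \log_{2} K$, the weight satisfies $K \omega_k(\tau) \le 1$ for all $\tau \in \RT$, so
\begin{displaymath}
 \sup_{\tau \in \RT_0}|b(\tau)\omega_k(\tau)| \le C < \infty,
\end{displaymath}
which places $b$ in $\K^{\RT_0}(\omega_k) \cap \Msp{\K} \subseteq \Msp{\K}$. Conversely, any $b \in \Msp{\K}$ lies in some step $\K^{\RT_0}(\omega_k)$ of the inductive limit, whence $|b(\tau)|\le \norm{b}_{\omega_k}\cdot 2^{k|\tau|}$, so $e+b$ satisfies the exponential growth restriction with constants $C = \max(1,\norm{b}_{\omega_k})$ and $K = 2^k$, and evaluates to $1$ on $\emptyset$. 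This yields the bijection $a \mapsto a-e$ between $\tBGC$ and $\Msp{\K}$.

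For closedness, Corollary \ref{cor: ev:cont} provides that $\ev_\emptyset \colon \Mo{\K}\to \K$ is continuous linear, so $\Msp{\K} = \ev_\emptyset^{-1}(0)$ is a closed linear subspace of $\Mo{\K}$ and consequently $e + \Msp{\K}$ is closed and affine.

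For the Silva property, my plan is to realise $\Msp{\K}$ directly as an inductive limit of Banach spaces with compact bonding maps. Set
\begin{displaymath}
 F_k \coloneq \{a \in \K^{\RT_0}(\omega_k) \mid a(\emptyset) = 0\}
  = \ker\bigl(\ev_\emptyset|_{\K^{\RT_0}(\omega_k)}\bigr).
\end{displaymath}
By the restricted version of Corollary \ref{cor: ev:cont}, $\ev_\emptyset$ is continuous on each step, so each $F_k$ is a closed hyperplane in the Banach space $\K^{\RT_0}(\omega_k)$ and is itself a Banach space. The inclusions $F_k \hookrightarrow F_{k+1}$ are the restrictions of the compact inclusions $\K^{\RT_0}(\omega_k) \hookrightarrow \K^{\RT_0}(\omega_{k+1})$ provided by Lemma \ref{lem: sq:silva}(c), hence are again compact operators. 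Therefore $\bigcup_{k\in \N} F_k$ carries a natural Silva space topology.

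The main (and really only) obstacle is to verify that this intrinsic Silva topology coincides with the subspace topology inherited from $\Mo{\K}$. The forward direction is immediate from the universal property of the inductive limit applied to the inclusions $F_k \hookrightarrow \Mo{\K}$. For the reverse, one uses that $\Msp{\K}$ is closed in the Silva space $\Mo{\K}$ together with the standard fact (see e.g.\ the references in \ref{setup: model:space}) that a closed subspace of a Silva space which meets each step in a closed subspace is itself a Silva space with inductive limit topology equal to the subspace topology. Together with the set-theoretic identification $\Msp{\K} = \bigcup_k F_k$ obtained above, this gives the desired conclusion.
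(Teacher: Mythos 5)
Your proof is correct, and its first half --- the identification of $\tBGC$ with $e+\Msp{\K}$ via step-wise exponential bounds, together with closedness via continuity of $\ev_\emptyset$ --- is essentially the paper's own argument (modulo the small slip that the inequality you need is $K^{|\tau|}\omega_k(\tau)\le 1$, i.e.\ $(K/2^k)^{|\tau|}\le 1$, not $K\omega_k(\tau)\le 1$; the conclusion is unaffected). For the Silva property you take a genuinely different route. The paper splits each step as $\K^{\RT_0}(\omega_k)=L_k\times\K$ with $L_k=\{a\in\K^{\RT_0}(\omega_k)\mid a(\emptyset)=0\}$ and invokes the fact that locally convex inductive limits commute with finite direct products (\cite[Theorem 3.4]{MR1878717}), which gives $\Mo{\K}=\Msp{\K}\times\K$ in one stroke: $\Msp{\K}$ is then the Silva space $\lim L_k$, it is \emph{complemented} in $\Mo{\K}$, and the agreement of the limit topology with the subspace topology comes for free. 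You instead assemble $\bigcup_k F_k$ directly and then appeal to the general theorem that a closed subspace of a Silva space is again a Silva space whose subspace topology coincides with the inductive limit topology of its step intersections. That theorem is true --- it can be found in Floret's work on locally convex sequences with compact bonding maps, i.e.\ the reference \cite{MR0287271} already cited in \ref{setup: model:space} --- but it is a substantially deeper fact than the product lemma, and it is precisely the one nontrivial step of your argument; it should be pinned down with an exact citation rather than left as ``the standard fact'', since for general (LB)-spaces the analogous statement fails. The trade-off is clear: the paper's argument is more elementary given its toolkit and yields the extra information that $\Msp{\K}$ is complemented, while your argument would apply verbatim to closed subspaces that admit no such splitting.
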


\begin{proof}
For $b$ in the (complex) Butcher group, consider $C,K >0$ such that for all $\tau \in \RT$ we have $|b(\tau)| < CK^{|\tau|}$.
By choice of the weights in \ref{setup: model:space}, there is $N \in \N$ with $\omega_N (\tau) \leq \frac{1}{K^{|\tau|}}$ for all $\tau \in \RT$.
Hence, $\sup_{\tau \in \RT_0} \abs{b(\tau) \omega_N (\tau)} < C < \infty$, i.e.\ $b \in \K^{\RT_0} (\omega_N)$.
Moreover, as $a(\emptyset) = 1$ holds for all $a \in \tBGC$, the (complex) Butcher group coincides with the affine subspace $\{a \in \Mo{\K} \mid a(\emptyset) = 1\} = e + \Msp{\K}$.
To see that $\Msp{\K}$ is a Silva space define $L_k \coloneq \{a \in \K^{\RT_0} (\omega_k) \mid a(\emptyset)=0\}$. Then clearly $ \K^{\RT_0} (\omega_k)  = L_k \times \K$ and we have $\Mo{\K} = \lim (L_k \times \K)  = \lim L_k \times \K =\Msp{K} \times \K$ (the inductive limit commutes with finite direct products by \cite[Theorem 3.4]{MR1878717}). In particular, the closed subspace $\Msp{\K}$ is a Silva space as the inductive limit of the $L_k$.
\end{proof}

\begin{rem}\label{rem: SILVA}
 The model space of the (complex) Butcher group is an inductive limit of Banach spaces, even more special: a Silva space.
 For these spaces it is known (see Proposition \ref{prop: silva}) that mappings are continuous (or differentiable) if and only if they are continuous (or differentiable) on each step of the inductive limit.
 Hence the leading idea will be to compute differentiability and continuity of mappings on the steps of the inductive limit.
 We will see later that the problem at hand can even be reduced to consider only mappings between Banach spaces.
\end{rem}

In the following sections we will mostly be concerned with the complex tame Butcher group.
If we can establish the Lie group structure for this group then the corresponding results for the tame Butcher group will follow in the wash by a complexification argument.
We prepare this by showing that the complex model space of $\tBGC$ is a complexification of the model space of $\tBG$.

\begin{lem}\label{lem: compl:Banach}
 Let $\omega \colon \N \rightarrow \R$ be a weight, then the Banach space $(\C^{\RT_0} (\omega), \norm{\cdot}_{\omega})$ is a complexification of the real Banach space $(\R^{\RT_0} (\omega), \norm{\cdot}_{\omega})$.
\end{lem}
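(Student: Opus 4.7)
The plan is to exhibit $\R^{\RT_0}(\omega)$ inside $\C^{\RT_0}(\omega)$ as the real-valued tree maps, and then to realize $\C^{\RT_0}(\omega) = \R^{\RT_0}(\omega) \oplus i\, \R^{\RT_0}(\omega)$ as a topological real direct sum by splitting each map into its real and imaginary parts pointwise.

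First, I would define $\iota \colon \R^{\RT_0}(\omega) \to \C^{\RT_0}(\omega)$ by sending a real tree map to itself, viewed as $\C$-valued. Since $|b(\tau)\omega(\tau)|$ has the same value whether $b(\tau)$ is read as a real or complex number, $\iota$ is a well-defined real-linear isometry $\norm{\iota(b)}_\omega = \norm{b}_\omega$, and in particular a topological embedding.

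Next, for $a \in \C^{\RT_0}(\omega)$, set $a_R(\tau) \coloneq \mathrm{Re}\, a(\tau)$ and $a_I(\tau) \coloneq \mathrm{Im}\, a(\tau)$. The elementary estimates
\begin{displaymath}
 \max\{|a_R(\tau)|, |a_I(\tau)|\} \leq |a(\tau)| \leq |a_R(\tau)| + |a_I(\tau)|
\end{displaymath}
multiplied by $\omega(\tau) \geq 0$ and taken to the supremum show simultaneously that $a_R, a_I \in \R^{\RT_0}(\omega)$ whenever $a \in \C^{\RT_0}(\omega)$, with
\begin{displaymath}
 \max\{\norm{a_R}_\omega, \norm{a_I}_\omega\} \leq \norm{a}_\omega \leq \norm{a_R}_\omega + \norm{a_I}_\omega .
\end{displaymath}
Hence the real-linear projections $\mathrm{Re}, \mathrm{Im} \colon \C^{\RT_0}(\omega) \to \R^{\RT_0}(\omega)$ are continuous, and $a = \iota(a_R) + i\, \iota(a_I)$, with this decomposition being unique since $\iota(\R^{\RT_0}(\omega)) \cap i\, \iota(\R^{\RT_0}(\omega)) = \{0\}$. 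Combining the continuity of $\mathrm{Re}, \mathrm{Im}$ with the continuity of $\iota$ and the scalar multiplication by $i$, one obtains that $\C^{\RT_0}(\omega) = \iota(\R^{\RT_0}(\omega)) \oplus i\, \iota(\R^{\RT_0}(\omega))$ is a topological direct sum of real locally convex spaces, which is exactly the statement that $\C^{\RT_0}(\omega)$ is a complexification of $\R^{\RT_0}(\omega)$.

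There is essentially no difficulty here: all assertions reduce to the pointwise identity $|z|^2 = (\mathrm{Re}\,z)^2 + (\mathrm{Im}\,z)^2$ and the fact that $\omega$ takes nonnegative values, so the only step requiring minor care is recording the two-sided norm estimate above to conclude that the projections $\mathrm{Re}$ and $\mathrm{Im}$ actually land in $\R^{\RT_0}(\omega)$ and are continuous.
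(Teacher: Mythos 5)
Your proof is correct and follows essentially the same route as the paper: split each complex tree map pointwise into real and imaginary parts and verify a two-sided norm estimate making $\C^{\RT_0}(\omega) = \R^{\RT_0}(\omega) \oplus i\,\R^{\RT_0}(\omega)$ a topological direct sum (the paper uses the equivalent bound $\norm{a+ib}_\omega \leq \sqrt{2}\max\{\norm{a}_\omega,\norm{b}_\omega\}$ where you use $\norm{a_R}_\omega + \norm{a_I}_\omega$). No gaps.
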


\begin{proof}
 In the following we denote by ``$i$'' the imaginary unit.
 A tree map $c \in \C^{\RT_0}$ splits canonically into a sum $c = a + i b$ where $a,b \in \R^{\RT_0}$.
 As $\omega$ takes only real values, the norm yields $\norm{a+ ib}_{\omega}= \sup_{\tau \in \RT_0} \sqrt{(a (\tau)\omega(\tau ))^2 + (b(\tau)\omega (\tau))^2}$.
 Using this identity an easy computation yields the estimates
 \begin{align}
  \max \{\norm{a}_{\omega}, \norm{b}_{\omega}\} &\leq \norm{a+ ib}_{\omega} \leq \sqrt{2}  \max \{\norm{a}_{\omega}, \norm{b}_{\omega} \}.\label{eq: norm1}
 \end{align}
 Note the estimate becomes an equality if either the imaginary or real part of $c = a + ib$ vanishes.
 Thus component wise splitting of elements in $\C^{\RT_0}(\omega)$ into real and imaginary part allows us to identify $\C^{\RT_0} (\omega) = \R^{\RT_0}(\omega) \oplus i \R^{\RT_0} (\omega)$ as locally convex spaces.
\end{proof}

\begin{prop}\label{prop: complexification}
 The Silva space $\Msp{\C}$ is a complexification of the Silva space $\Msp{\R}$.
\end{prop}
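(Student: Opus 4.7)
The plan is to lift the Banach-space complexification of Lemma \ref{lem: compl:Banach} through the inductive limit defining $\Msp{\C}$ and $\Msp{\R}$.

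First I would fix the closed subspaces $L_k^{\K} \coloneq \{a \in \K^{\RT_0}(\omega_k) \mid a(\emptyset) = 0\}$ already introduced in the proof of Lemma \ref{lem: aff:subs}, so that $\Msp{\K} = \bigcup_{k \in \N} L_k^{\K}$ as a Silva space. By Lemma \ref{lem: compl:Banach}, each $\C^{\RT_0}(\omega_k)$ is the complexification of $\R^{\RT_0}(\omega_k)$ via componentwise decomposition $c = a + ib$. Since the condition $c(\emptyset) = 0$ is equivalent to $a(\emptyset) = 0$ and $b(\emptyset) = 0$, the decomposition restricts to an identification $L_k^{\C} = L_k^{\R} \oplus i L_k^{\R}$ of Banach spaces, with the norm estimates from \eqref{eq: norm1} transferring verbatim.

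Next I would pass to the inductive limit. The bonding maps $L_k^{\K} \hookrightarrow L_{k+1}^{\K}$ are the restrictions of the compact inclusions from Setup \ref{setup: model:space} and clearly respect the real/imaginary decomposition. Hence, invoking \cite[Theorem 3.4]{MR1878717} to commute the inductive limit with the finite direct product (exactly as in the proof of Lemma \ref{lem: aff:subs}), one obtains
\begin{displaymath}
 \Msp{\C} = \lim_{\longrightarrow} L_k^{\C} = \lim_{\longrightarrow} \bigl(L_k^{\R} \oplus i L_k^{\R}\bigr) = \Bigl(\lim_{\longrightarrow} L_k^{\R}\Bigr) \oplus i \Bigl(\lim_{\longrightarrow} L_k^{\R}\Bigr) = \Msp{\R} \oplus i \Msp{\R}
\end{displaymath}
as locally convex spaces. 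This exhibits $\Msp{\C}$ as the complexification of $\Msp{\R}$.

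The argument is essentially bookkeeping: the genuine content sits in Lemma \ref{lem: compl:Banach} and the cited commutation of inductive limits with finite direct products. The only point that requires a moment of care is checking that the locally convex topology induced on $\Msp{\R} \oplus i \Msp{\R}$ by the inductive limit of the $L_k^{\C}$ coincides with the product topology on two copies of $\Msp{\R}$; this is exactly what the cited theorem guarantees, and the norm estimates \eqref{eq: norm1} make the identification isomorphic at each finite step, so no subtlety arises in passing to the limit.
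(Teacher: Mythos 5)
Your proposal is correct and follows essentially the same route as the paper: the paper likewise identifies $\Msp{\C} = \Msp{\R} \oplus i\,\Msp{\R}$ stepwise (resting on Lemma \ref{lem: compl:Banach}) and then invokes \cite[Theorem 3.4]{MR1878717} to commute the inductive limit with the finite direct product. Your version merely spells out the bookkeeping at the level of the subspaces $L_k$ more explicitly than the paper does.
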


\begin{proof}
 Clearly as vector spaces, we have $(\Msp{\R})_\C = \Msp{\R} \otimes i \Msp{\R} = \Msp{\C}$ as a real locally convex space.
 As inductive limits commute with finite products by  \cite[Theorem 3.4]{MR1878717}, we see that  $(\Msp{\R})_\C = \Msp{\C}$ also as a complex locally convex space.
\end{proof}

\section{The Lie group structure of the tame Butcher group}\label{sect: LGP}

In this section we will prove that the tame Butcher group is a Lie group modelled on a Silva space.
We will deal with the complex case first and exploit the properties of Silva spaces recorded in Remark \ref{rem: SILVA}.

The group operations of $\tBGC$ naturally restrict to mappings from the steps of the inductive limit into $\Mo{\C}$.
Using suitable estimates (cf.\ Appendix \ref{App: est:TM}) these mappings restrict even to maps from open zero-neighbourhoods in a step into a suitable step of the inductive limit.

\begin{nota}\label{nota: ball}
 Recall that $e$ is the unit element in $\Mo{\C}$ and $e \in \C^{\RT_0} (\omega_k) \subseteq \Mo{\C}$ (note that the Banach space does not carry the subspace topology from $\Mo{\C}$).
 We denote by $B_R^{\omega_k} (e)$ the $R$-ball around $e$ in the Banach space $\C^{\RT_0} (\omega_k)$.
\end{nota}

\begin{setup}\label{setup: loc:operat}
 Fix $k \in \N$ and $R \geq 1$.
 Then define the neighbourhood $W(k,R) \coloneq B_R^{\omega_k} (e) \cap \tBGC$.
 Using the estimates \eqref{eq: prod:est} and \eqref{eq: inv:est}, there is an $N \coloneq N(k,R) \in \N$ such that the group operations of $\tBGC$ factor through maps
  \begin{align*}
   \mu_k^R \colon W(k,R) \times W(k,R) &\rightarrow \C^{\RT_0} (\omega_N) \cap \tBGC,\quad \mu_k^R (a,b) (\tau) = \sum_{s \in \OST (\tau)} b(s_\tau) a(\tau \setminus s)\\
   \text{and } \iota_k^R \colon W(k,R) &\rightarrow \C^{\RT_0} (\omega_N) \cap \tBGC,\quad \iota_k^R (a) (\tau) =\sum_{p \in \pP (\tau)} (-1)^{\#\tau \setminus p} a (\tau \setminus p).
  \end{align*}
  Enlarging $N = N(k,R)$, we can from now on assume that $\frac{8R}{2^{N-k}} < 1$.
  This entails
  \begin{equation}\label{eq: norm:est}
   \frac{\omega_N (\tau)}{\omega_k (\tau)} (8R)^{|\tau|} = \left(\frac{8R}{2^{N-k}}\right)^{|\tau|} < 1, \quad \forall k \in \N.
  \end{equation}
\end{setup}

 To turn $\tBGC$ into a Lie group, we consider continuity and differentiability of the restricted operations from \ref{setup: loc:operat}.
 By virtue of Proposition \ref{prop: silva} this will yield smoothness of the group operations.
 As the ambient spaces of our mappings become (affine) subspaces of Banach spaces this simplifies the problem enormously.
 Let us prove first a preliminary proposition.

\begin{prop}\label{prop: ops:cont}
 For each $k \in \N$ and each $R \geq 1$ the maps $\mu_k^R$ and $\iota_k^R$ are continuous.
\end{prop}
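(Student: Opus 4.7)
The plan is to show that both $\mu_k^R$ and $\iota_k^R$ are in fact Lipschitz continuous from the restriction of the $\omega_k$-norm on $W(k,R) \times W(k,R)$ into $\C^{\RT_0}(\omega_N)$. Since the domains inherit the norm topology from the Banach space $\C^{\RT_0}(\omega_k)$ and the codomain is a closed affine subspace of a Banach space, continuity of the restricted operations reduces to verifying that for any two pairs $(a_1,b_1),(a_2,b_2)$ in $W(k,R) \times W(k,R)$ one has an estimate of the form
\begin{displaymath}
 \norm{\mu_k^R(a_1,b_1) - \mu_k^R(a_2,b_2)}_{\omega_N} \leq L \bigl( \norm{a_1 - a_2}_{\omega_k} + \norm{b_1 - b_2}_{\omega_k}\bigr),
\end{displaymath}
and analogously for $\iota_k^R$. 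For points $a \in W(k,R)$ and $\theta \in \RT$ the bound $|a(\theta)| \leq R \cdot 2^{k|\theta|}$ (so $|a(\theta)| \leq R/\omega_k(\theta)$) holds from the very definition of $W(k,R)$, using $R \geq 1$.

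For $\mu_k^R$ I would split the difference by the standard trick
\begin{displaymath}
 \mu_k^R(a_1,b_1) - \mu_k^R(a_2,b_2) = \bigl(\mu_k^R(a_1,b_1)- \mu_k^R(a_2,b_1)\bigr) + \bigl(\mu_k^R(a_2,b_1)- \mu_k^R(a_2,b_2)\bigr).
\end{displaymath}
Evaluating at a fixed $\tau \in \RT_0$, the second summand is controlled by $\sum_{s \in \OST(\tau)} |b_1(s_\tau) - b_2(s_\tau)| \cdot |a_2(\tau \setminus s)|$. Using $|a_2(\theta)| \leq R \cdot 2^{k|\theta|}$ for each tree $\theta$ in the forest $\tau \setminus s$ and the multiplicativity of $\omega_k$ from Remark \ref{rem: weights:mult}, this is bounded by $R^{|\tau|} 2^{k|\tau|} \norm{b_1-b_2}_{\omega_k} \cdot |\OST(\tau)|$, and the combinatorial estimate (invoked in \eqref{eq: prod:est}) gives $|\OST(\tau)| \leq 2^{|\tau|}$. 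For the first summand I would apply the telescoping identity $\prod_j x_j - \prod_j y_j = \sum_{j} \bigl(\prod_{\ell < j} x_\ell \bigr)(x_j - y_j) \bigl(\prod_{\ell > j} y_\ell\bigr)$ to the product $a_i(\tau\setminus s) = \prod_\theta a_i(\theta)$; the resulting sum contributes an extra factor $\#(\tau\setminus s) \leq |\tau|$, and each term is bounded exactly as above.

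In total, both summands evaluated at $\tau$ are bounded by a constant multiple of $|\tau| \cdot (2R \cdot 2^k)^{|\tau|}$ times $(\norm{a_1-a_2}_{\omega_k} + \norm{b_1-b_2}_{\omega_k})$. Multiplying by $\omega_N(\tau) = 2^{-N|\tau|}$ one obtains the factor $|\tau| \cdot (2R/2^{N-k})^{|\tau|}$, and the choice \eqref{eq: norm:est} (i.e.\ $2^{N-k} > 8R$) forces this quantity to be bounded in $|\tau|$ (indeed it is eventually dominated by $|\tau|(1/4)^{|\tau|}$). Taking the supremum over $\tau$ yields the desired Lipschitz estimate in the $\omega_N$-norm.

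The argument for $\iota_k^R$ is essentially identical: one rewrites $\iota_k^R(a_1)(\tau) - \iota_k^R(a_2)(\tau)$ as a sum over $p \in \pP(\tau)$ of differences of products and applies the same telescoping trick, using the combinatorial bound $|\pP(\tau)| \leq 2^{|\tau|}$ that already appeared in \eqref{eq: inv:est}. I expect the main bookkeeping obstacle to be the telescoping step: the number of factors in $a_i(\tau \setminus s)$ depends on $\tau$ and $s$, so the resulting polynomial factor $|\tau|$ must be absorbed by the exponential decay built into the ratio $\omega_N/\omega_k$. Once one sees that \eqref{eq: norm:est} is precisely what is needed for this absorption (after possibly enlarging $N$ further to dominate an additional factor of $|\tau|$), the continuity of both maps follows uniformly on $W(k,R) \times W(k,R)$.
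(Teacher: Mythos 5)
Your argument is correct and follows essentially the same route as the paper's: the paper likewise reduces to norm estimates between the Banach steps, splits $\mu_k^R(a+c,b+d)-\mu_k^R(a,b)$ by the same add-and-subtract trick (Lemma \ref{lem: estimates1}), uses multiplicativity of $\omega_k$ together with $|\OST(\tau)|,|\pP(\tau)|\leq 2^{|\tau|}$, and absorbs the resulting exponential growth via \eqref{eq: norm:est}. The only technical difference is that the paper controls the difference of products $\prod_\theta(a(\theta)+c(\theta))-\prod_\theta a(\theta)$ by expanding over the power set of the forest (Lemma \ref{lem: tree:diff:est}), producing a factor $2^{|\tau|}$ where your telescoping identity produces a factor $|\tau|$; either factor is dominated by the decay $(8R/2^{N-k})^{|\tau|}<1$ already built into the choice of $N$, so no further enlargement of $N$ is needed.
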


\begin{proof}
 Let $U$ be an open subset of $\C^{\RT_0} (\omega_N)$.
 We will prove that $(\mu^k_R)^{-1} (U)$ or $(\iota^k_R)^{-1} (U)$ are open by showing that these sets are neighbourhoods of all of their points.
 \medskip

 \textbf{1. $\mu_k^R$ is continuous} Choose $(a,b) \in (\mu^R_k)^{-1} (U)$.
 Let us construct $1> \ve >0$ such that $\mu_k^R$ maps the set $B_\ve^{\omega_k} (a) \times  B_\ve^{\omega_k} (b)$ to $U$.
 Assume that $\norm{a}_{\omega_k} + \norm{b}_{\omega_k} + 2\ve < R$.
 Then for arbitrary $c, d \in B_\ve^{\omega_k} (0) \cap \Msp{\C}$ the estimate from Lemma \ref{lem: estimates1} is applicable:
  \begin{align*}
   \norm{\mu_k^R (a+c, b+d) - \mu_k^R(a,b)}_{\omega_k} &= \norm{(a + c) \cdot (b + d) - a \cdot b}_{\omega_N} \\
										    &\leq \sup_{\tau \in \RT_0} 2\ve \frac{(4R)^{|\tau|}}{\omega_k (\tau)} \omega_N (\tau) \stackrel{\eqref{eq: norm:est}}{\leq} 2\ve .
  \end{align*}
 Hence for small enough $\ve$, $\mu_k^R$ maps $(B_\ve^{\omega_k} (a) \times  B_\ve^{\omega_k} (b)) \cap (\tBGC \times \tBGC)$ to $U$.
 In conclusion, $\mu_k$ is continuous.
 \medskip

 \textbf{2. $i_k^R$ is continuous} Consider $a \in (i_k^R)^{-1} (U)$.
 We construct $1> \delta > 0$ such that $i_k^R (B_\delta^{\omega_k} (a) \cap \tBGC) \subseteq U$.
 Let $c$ be in $B_\delta^{\omega_k}(0) \cap \Msp{\C}$.
 From \eqref{setup: loc:operat} we deduce
  \begin{align*}
   \norm{i_k^R (a + c) - i_k^R (q)}_{\omega_N} &= \sup_{\tau \in \RT_0} \abs{\sum_{p\in \pP (\tau)} \hspace{-.15cm} (-1)^{\# \tau \setminus p}((a + c) (\tau \setminus p) - a(\tau \setminus p)} \omega_N (\tau)\\
								  &\hspace{-.55cm}\stackrel{\text{Lemma  \ref{lem: tree:diff:est}}}{\leq} \sup_{\tau \in \RT} \sum_{p\in \pP (\tau)} \frac{2^{|\tau|}(2R)^{|\tau|-1} \delta}{\prod_{\theta \in \tau \setminus p}\omega_k (\theta)} \omega_N (\tau)\\
								  &\hspace{-3pt}\stackrel{\eqref{eq: Part:est}}{\leq} \sup_{\tau \in \RT} \frac{4^{|\tau|}(2R)^{|\tau|-1} \delta}{\prod_{\theta \in \tau \setminus s}\omega_k (\theta)} \omega_N (\tau) \leq \sup_{\tau \in \RT} \frac{(8R)^{|\tau_n|} \delta}{\omega_k (\tau)} \omega_N (\tau) \stackrel{\eqref{eq: norm:est}}{\leq} \delta
  \end{align*}
 Again for $\delta$ small enough, $i_k^R$ maps $B_\delta^{\omega_k}(a) \cap \tBGC$ to $U$.
 Thus $i_k^R$ is continuous.
\end{proof}

Before continuing, we urge the reader to review Appendix \ref{App: lcvx:diff} for basic facts on infinite dimensional calculus and locally convex manifolds which will be needed in a moment.

\begin{thm}\label{thm: tameBGp:Lie}
  The subspace topology induced by $\Mo{\C}$ on $\tBGC$ turns $\tBGC$ into a (complex analytic) Lie group modelled on the Silva space $\Msp{\C}$.
 \end{thm}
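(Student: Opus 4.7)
The plan is to equip $\tBGC$ with the obvious global chart and then verify that the group operations are complex analytic by testing them one step at a time against the Silva inductive limit.

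By Lemma \ref{lem: aff:subs}, $\tBGC = e + \Msp{\C}$ is a closed affine subspace of the Silva space $\Mo{\C}$, so translation by $-e$ provides a global chart $\phi \colon \tBGC \to \Msp{\C}$. This immediately turns $\tBGC$ into a complex analytic manifold modelled on $\Msp{\C}$. It remains to prove that the group multiplication and inversion, transported via $\phi$, are complex analytic on open subsets of $\Msp{\C}$. Proposition \ref{prop: silva}, recalled in Remark \ref{rem: SILVA}, permits this to be checked step by step: it suffices to show that for every $k \in \N$ and every $R \geq 1$ the maps $\mu_k^R$ and $\iota_k^R$ of \ref{setup: loc:operat} are complex analytic as maps between open subsets of the Banach spaces $\C^{\RT_0}(\omega_k)$ (and its square) and $\C^{\RT_0}(\omega_N)$, with $N = N(k,R)$.

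Continuity of $\mu_k^R$ and $\iota_k^R$ is Proposition \ref{prop: ops:cont}, hence these maps are locally bounded. To upgrade this to holomorphy I will invoke the standard criterion that a locally bounded map into a complex Banach space $F$ is holomorphic provided composition with a point-separating family of continuous linear functionals on $F$ is holomorphic. By Corollary \ref{cor: ev:cont} the evaluations $\ev_\tau$, $\tau \in \RT_0$, form such a family on $\C^{\RT_0}(\omega_N)$. For each fixed $\tau$ only finitely many subtrees of $\tau$ contribute to the defining formulas, so $\ev_\tau \circ \mu_k^R(a,b) = \sum_{s \in \OST(\tau)} b(s_\tau) \prod_{\theta \in \tau \setminus s} a(\theta)$ is a polynomial in finitely many of the continuous linear functionals $\ev_\theta$ applied to $a$ and $b$, and similarly for $\ev_\tau \circ \iota_k^R$. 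Polynomials between complex Banach spaces are complex analytic, so weak analyticity combined with local boundedness delivers complex analyticity of $\mu_k^R$ and $\iota_k^R$. Lifting through Proposition \ref{prop: silva} yields complex analyticity of the multiplication and inversion on $\Msp{\C}$, whence $\tBGC$ is a complex analytic Lie group modelled on $\Msp{\C}$.

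The main technical obstacle has already been cleared in Proposition \ref{prop: ops:cont}: the weighted combinatorial estimates (inequality \eqref{eq: norm:est}, together with the bounds on sums over ordered subtrees and partitions) are precisely what keep the images inside the correct step of the inductive limit and make continuity possible at all. Once continuity on each step is in hand, the analytic upgrade is essentially formal because each coordinate of the image depends polynomially on finitely many coordinates of the argument; no convergence issue beyond those handled in \ref{setup: loc:operat} and Proposition \ref{prop: ops:cont} appears. The entire argument thus reduces the problem to complex analysis of polynomial maps between complex Banach spaces, packaged by the Silva space machinery of Proposition \ref{prop: silva}.
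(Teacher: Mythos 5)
Your proposal is correct and follows essentially the same route as the paper: reduce to the steps of the Silva limit via Proposition \ref{prop: silva}, use the continuity of $\mu_k^R$ and $\iota_k^R$ from Proposition \ref{prop: ops:cont}, and upgrade to complex analyticity by observing that each coordinate $\ev_\tau \circ \mu_k^R$, $\ev_\tau \circ \iota_k^R$ is a continuous polynomial and the evaluations separate points (the paper packages the weak-to-strong holomorphy step as Lemma \ref{lem:folklore}, stated with continuity rather than local boundedness, but this is the same criterion). The only detail the paper spells out that you elide is that the maps $\mu_k^R$ for varying $R$ patch together to the full restricted multiplication $\mu_k$ because analyticity is a local property; this is routine.
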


 \begin{proof}
 Fix $R \geq 1$ and consider for $k \in \N$ the maps $\mu_k^R$, $\iota_k^R$ and choose $N = N(k,R)$ as in \ref{setup: loc:operat}.
 Define $L_N \coloneq \{a \in \C^{\RT_0} (\omega_N) \mid a (\emptyset) = 0\}$ and $\Psi \colon \tBGC \rightarrow \Mo{\C}, a \mapsto a - e$ (where $e \in \tBGC$ is the identity element).
 Then Proposition \ref{prop: ops:cont} shows that $\Psi \circ \mu_k^R$ and $\Psi \circ \iota_k^R$ are continuous maps from an open set of a locally convex space into the complete locally convex space $L_N$.

 Recall from Corollary \ref{cor: ev:cont} that the point evaluation mappings $\ev_\tau , \tau \in \RT_0$ form a set of continuous linear mappings on $L_N$.
 Further, these functionals clearly separate the points on $L_N$.
 From the formula for $\mu_k^R$ and $\iota_k^R$ in \ref{setup: loc:operat} and the definition of $\Psi$ and $\ev_\tau$ it is obvious that for each $\tau \in \RT_0$ the maps $\ev_\tau \circ \Psi \circ \mu_k^R$ and $\ev_\tau \circ \Psi \circ \iota_k^R$ are continuous polynomials and therefore complex analytic.
 Now Lemma \ref{lem:folklore} implies that $\mu_k^R$ and $\iota_k^R$ are complex analytic.

 Recall from \cite[Proposition 4.4. (d)]{hg2011} that the product of two Silva spaces is again a Silva space.\footnote{$\Mo{\C} \times \Mo{\C}$ is the limit of $(\C^{\RT_0} (\omega_k) \times \C^{\RT_0} (\omega_k))_{k \in \N}$ where the obvious bonding maps are compact.}
  The group operations are thus defined on a closed affine subspace of a Silva spaces.
  By Proposition \ref{prop: silva} these maps will be holomorphic if and only if for each $k \in \N$ the restrictions
  $\iota_k \coloneq \iota|_{\C^{\RT_0}(\omega_k) \cap \tBGC}$ and $\cdot_k \coloneq \cdot|_{\C^{\RT_0} (\omega_k) \cap \tBGC \times\C^{\RT_0} (\omega_k) \cap \tBGC}$ are holomorphic.
 However, for each $R>1$ the maps $\iota_k$ and $\iota_k^R$ as well as the maps $\mu_k$ and $\mu_k^R$ coincide.
 As complex analyticity is a local condition, we conclude that $\cdot_k$ and $\iota_k$ are holomorphic and thus the group operations are holomorphic by Proposition \ref{prop: silva}.
 \end{proof}

 \begin{cor}\label{cor: cplx: inc}
  The inclusion $I_\C \colon \Mo{\C} \rightarrow \C^{\RT_0}$ (cf.\ Lemma \ref{lem: cI:cont}) restricts to a morphism $\alpha \colon \tBGC \rightarrow \BGC$ of complex Lie groups (where $\BGC$ is endowed with the Lie group structure of \cite[Theorem 2.1]{BS14}).
 \end{cor}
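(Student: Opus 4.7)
The corollary is a two-part statement: first that $\alpha$ is a group homomorphism, and second that it is a morphism of complex Lie groups (i.e.\ complex analytic). The first is essentially trivial since $\tBGC$ has been established in Lemma~\ref{lem: tame:subgp} as a subgroup of $\BGC$, so $\alpha$ is nothing but the inclusion of an abstract subgroup and automatically respects the group operations. The real content is the smoothness statement.

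The plan is to exploit that both Lie group structures are given in terms of affine subspaces of ambient locally convex spaces that are naturally identified via global charts. Concretely, by Lemma~\ref{lem: aff:subs} the Lie group $\tBGC$ is (as a manifold) the affine subspace $e + \Msp{\C}$ of the Silva space $\Mo{\C}$, with the single global chart $\Psi \colon \tBGC \to \Msp{\C}$, $a \mapsto a - e$. By Proposition~\ref{prop: BGP:Lie} together with Remark~\ref{rem: pwtop}, the Lie group $\BGC$ is the affine subspace $e + \C^{\RT}$ of $\C^{\RT_0}$ with its pointwise-convergence topology, using the analogous global chart $\tilde{\Psi} \colon \BGC \to \C^{\RT}$, $a \mapsto a - e$.

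In these global charts, the inclusion $\alpha$ reads as the map $\Msp{\C} \to \C^{\RT}$ obtained by restricting the inclusion $I_\C \colon \Mo{\C} \to \C^{\RT_0}$ from Lemma~\ref{lem: cI:cont} to the closed subspace $\Msp{\C}$ (and landing automatically in $\C^{\RT}$, since elements of $\Msp{\C}$ vanish on $\emptyset$). Since $I_\C$ is continuous linear, so is this restriction, and continuous linear maps between locally convex spaces are complex analytic. Thus $\alpha$ is complex analytic when expressed in global charts, hence is a complex analytic map between the Lie groups $\tBGC$ and $\BGC$.

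No significant obstacle is anticipated; the only thing to keep straight is that the Silva topology on $\Mo{\C}$ is strictly finer than the subspace topology inherited from $\C^{\RT_0}$, which is precisely why the direction of the morphism is from $\tBGC$ to $\BGC$ and not the other way round. The continuity of $I_\C$ secured in Lemma~\ref{lem: cI:cont} is the key ingredient that makes this direction work.
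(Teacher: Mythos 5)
Your proposal is correct and follows essentially the same route as the paper: both arguments reduce the analyticity of $\alpha$ to the continuity and linearity of $I_\C$ from Lemma \ref{lem: cI:cont}, restricted (and corestricted) to the closed affine subspaces realising the two groups, and both obtain the homomorphism property from the fact that the group operations of $\tBGC$ are restrictions of those of $\BGC$. Writing the argument explicitly in the global charts $a \mapsto a - e$ is a harmless elaboration of what the paper does implicitly.
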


 \begin{proof}
  We already know from Lemma \ref{lem: cI:cont} that $I_\C$ is continuous and linear, whence smooth.
  By Theorem \ref{thm: tameBGp:Lie} the Lie group $\tBGC$ is a closed affine subspace of $\Mo{\C}$.
  Likewise, by \cite[Theorem 2.1]{BS14} the Lie group $\BGC$ is a closed affine subspace of $\C^{\RT_0}$. By construction of $\alpha$, the map is the restriction and corestriction of $I_\C$ to the corresponding affine subspaces.
  Hence $\alpha$ is smooth and since the group operations on $\tBGC$ arose by restricting the group operations of $\BGC$, the map becomes a morphism of Lie groups.
 \end{proof}

 \begin{rem}
  Notice that by Lemma \ref{lem:folklore} a continuous map $f \colon E \rightarrow \tBGC$ is holomorphic if and only if $\ev_\tau \circ f$ is holomorphic for each $\tau \in \RT_0$.
  Thus continuous maps to the complex tame Butcher group are holomorphic if and only if they are component wise holomorphic.
  This result is a handy criterion to establish differentiability for mappings into the complex tame Butcher group.
 \end{rem}

In Lemma \ref{prop: complexification} we have identified the complexification of $\Msp{\R}$ as $\Msp{\C}$. This allows us to identify the complex tame Butcher group as a complexification (cf.\ \cite[9.6]{HG15reg}) of the tame Butcher group.

\begin{defn}\label{defn: complexification}
 Let $G$ be a real analytic Lie group modelled on the locally convex space $E$ and $G_\C$ be a complex analytic Lie group modelled on $E_\C$.
 Then $G_\C$ is called a \emph{complexification} of $G$ if $G$ is a real submanifold of $G_\C$, the inclusion $G \rightarrow G_\C$ is a group homomorphism and for each $g \in G$, there exists an open $g$-neighbourhood $V \subseteq G_\C$ and a complex analytic diffeomorphism $\phi \colon V \rightarrow W \subseteq E_\C$ such that $\phi(V\cap G) = W\cap E$.
\end{defn}

With this notion of complexification of Lie groups, the results above entail the following corollaries (whose proofs are obvious since the complex maps restrict to real analytic mappings).

\begin{cor}\label{cor: LGP:compl}
 The complex tame Butcher group contains the tame Butcher group $\tBG$ as a real analytic Lie subgroup modelled on the Silva space $\Mo{\R}$.
 Moreover, the inclusion map is a homomorphism of groups, whence $\tBGC$ is a complexification of $\tBG$.
\end{cor}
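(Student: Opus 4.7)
The plan is to obtain everything as a direct consequence of the complexification statement for the model space (Proposition \ref{prop: complexification}) together with the complex Lie group structure already established in Theorem \ref{thm: tameBGp:Lie}. First I would identify $\tBG$ with the real affine subspace $e + \Msp{\R}$ of $\Msp{\C}$: since a tree map $a$ takes real values if and only if it lies in $\R^{\RT_0}(\omega_k) \subseteq \C^{\RT_0}(\omega_k)$ for some $k$, and by Lemma \ref{lem: compl:Banach} each $\C^{\RT_0}(\omega_k)$ is a complexification of $\R^{\RT_0}(\omega_k)$, one sees at once that $\tBG = \tBGC \cap (e + \Msp{\R})$ as subsets of $\Mo{\C}$.

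Next I would verify that the group operations of $\tBGC$ restrict to real analytic operations on $\tBG$. The formulas \eqref{eq: BGP:mult} and \eqref{eq: BGP:invers} have integer coefficients, so if $a, b \in \tBG$ take real values then $a \cdot b$ and $a^{-1}$ take real values as well; hence the multiplication and inversion restrict to maps $\tBG \times \tBG \to \tBG$ and $\tBG \to \tBG$. Real analyticity of these restrictions follows from the observation that, on each Banach step, the local operations $\mu_k^R$ and $\iota_k^R$ were shown in the proof of Theorem \ref{thm: tameBGp:Lie} to be complex analytic with values in a Banach space which itself is the complexification of its real counterpart; a complex analytic map between such complexified Banach spaces that sends real points to real points restricts to a real analytic map on the real loci. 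Together with Proposition \ref{prop: silva} (applied now to the real Silva space $\Msp{\R}$) this gives real analyticity of the restricted group operations, so that $\tBG$ is a real analytic Lie group modelled on $\Msp{\R}$.

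To conclude that $\tBGC$ is a complexification of $\tBG$ in the sense of Definition \ref{defn: complexification}, I would take as the chart around a given $g \in \tBG \subseteq \tBGC$ the global chart $\phi \colon \tBGC \to \Msp{\C}$, $a \mapsto a - e$ (which is a complex analytic diffeomorphism since $\tBGC = e + \Msp{\C}$ is a closed affine subspace of $\Mo{\C}$ by Lemma \ref{lem: aff:subs}). Then $\phi(\tBG) = \Msp{\R}$ is exactly the real part of $\Msp{\C} = (\Msp{\R})_\C$ by Proposition \ref{prop: complexification}, so the compatibility condition of Definition \ref{defn: complexification} holds, and the inclusion $\tBG \hookrightarrow \tBGC$ is a group homomorphism by construction.

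I do not anticipate a serious obstacle: the only nontrivial point is the passage from complex analyticity of the Banach-step operations to real analyticity of their restrictions, and this is a standard consequence of the splitting $\C^{\RT_0}(\omega_k) = \R^{\RT_0}(\omega_k) \oplus i\R^{\RT_0}(\omega_k)$ from Lemma \ref{lem: compl:Banach} together with the fact that the operations preserve the real part. Everything else is a direct unwinding of definitions.
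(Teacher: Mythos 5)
Your proposal is correct and takes essentially the same route as the paper, which omits the argument entirely with the remark that the proofs are ``obvious since the complex maps restrict to real analytic mappings.'' You have simply made that remark explicit: the identification $\tBG = e + \Msp{\R}$ inside $\Mo{\C}$, the preservation of real values by the integer-coefficient group laws so that the holomorphic operations of Theorem \ref{thm: tameBGp:Lie} serve as the required complex analytic extensions (which is exactly the paper's definition of real analyticity), and the global affine chart witnessing the condition of Definition \ref{defn: complexification}.
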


\begin{cor}\label{cor: real: inc}
 The canonical inclusion $\alpha_\R \colon \tBG \rightarrow \BGp$ is a morphism of real analytic Lie groups.
\end{cor}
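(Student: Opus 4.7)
The plan is to mirror the proof of Corollary \ref{cor: cplx: inc}, doing everything over $\R$ instead of $\C$. First I would note that Lemma \ref{lem: cI:cont} applies in both scalar fields $\K \in \{\R,\C\}$, so the canonical inclusion $I_\R \colon \Mo{\R} \to \R^{\RT_0}$ is continuous linear, hence real analytic.

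Next I would recall the model-space descriptions of the two groups involved: by Corollary \ref{cor: LGP:compl} the tame Butcher group $\tBG$ is the closed affine submanifold $e + \Msp{\R}$ of the Silva space $\Mo{\R}$, while by Proposition \ref{prop: BGP:Lie} the Butcher group $\BGp$ is the closed affine submanifold $e + \R^{\RT}$ of $\R^{\RT_0}$ with the topology of pointwise convergence. Both share the same identity element $e$, and under these identifications the set-theoretic inclusion $\alpha_\R$ is precisely the restriction and corestriction of $I_\R$ to these affine subspaces. Since $I_\R$ is real analytic and restriction/corestriction of a real analytic map to closed affine submanifolds is again real analytic, $\alpha_\R$ is a real analytic map between the underlying manifolds.

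Finally, I would verify that $\alpha_\R$ is a group homomorphism. The group operations on $\BGp$ are given by the formulas \eqref{eq: BGP:mult} and \eqref{eq: BGP:invers}, and by Lemma \ref{lem: tame:subgp} these very same formulas define the group operations on $\tBG$ as a subgroup of $\BGp$. Therefore $\alpha_\R$ is a morphism of real analytic Lie groups.

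No step poses a real obstacle here, since all the hard work (continuity of the inclusion, the submanifold structure, and the subgroup property) has been established earlier; the corollary is essentially a bookkeeping statement combining Lemma \ref{lem: cI:cont}, Lemma \ref{lem: tame:subgp}, Proposition \ref{prop: BGP:Lie}, and Corollary \ref{cor: LGP:compl}. An alternative, equally short, route would be to invoke Corollary \ref{cor: cplx: inc} together with Corollary \ref{cor: LGP:compl}: the complex Lie group morphism $\alpha \colon \tBGC \to \BGC$ maps the real form $\tBG$ into the real form $\BGp$ and restricts there to $\alpha_\R$, which is then real analytic because complexifications induce real analytic structure on the real forms by Definition \ref{defn: complexification}.
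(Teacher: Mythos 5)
Your proposal is correct. Note, though, that the paper does not actually run the direct real argument you give as your main route: Corollary \ref{cor: real: inc} is stated (together with Corollary \ref{cor: LGP:compl}) with the one-line justification that the proofs are obvious ``since the complex maps restrict to real analytic mappings'' --- i.e.\ the paper's official proof is precisely the alternative you sketch in your last sentence, restricting the complex analytic morphism $\alpha \colon \tBGC \rightarrow \BGC$ of Corollary \ref{cor: cplx: inc} to the real forms and using the complexification structure of Corollary \ref{cor: LGP:compl} and \cite[Theorem 2.1]{BS14}. Your primary argument instead re-runs the proof of Corollary \ref{cor: cplx: inc} over $\R$: $I_\R$ is continuous linear (Lemma \ref{lem: cI:cont} is stated for both scalar fields), hence real analytic, and $\alpha_\R$ is its restriction and corestriction to the closed affine submanifolds $e+\Msp{\R}\subseteq \Mo{\R}$ and $e+\R^{\RT}\subseteq \R^{\RT_0}$, with the homomorphism property coming from Lemma \ref{lem: tame:subgp}. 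This is equally valid; the only point worth making explicit is why corestriction preserves real analyticity (for a continuous linear, or here affine, map this is immediate, since its complexification corestricts to the complexified subspaces, which are complemented by Lemma \ref{lem: aff:subs} and the product structure of $\R^{\RT_0}$). The complexification route is shorter because it delegates exactly this point to Definition \ref{defn: complexification}; your direct route has the mild advantage of not depending on first establishing that $\tBGC$ is a complexification of $\tBG$. Both are fine.
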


Summing up the previous results, we obtain the following commuting diagram of (complex and real) analytic Lie groups
\begin{equation}\label{eq: relation} \begin{aligned}
 \begin{xy}
  \xymatrix{
    \tBG \ar[r]^{\alpha_{\R}}    \ar[d]^\subseteq         &   \BGp  \ar[d]^\subseteq     & \text{real analytic Lie group} \ar[d]^{\text{complexification}}\\
      \tBGC \ar[r]^{\alpha}    &   \BGC   & \text{complex analytic Lie group.} \\
  }
\end{xy}
\end{aligned}
\end{equation}
 The diagram \eqref{eq: relation} suggests a close connection between the tame Butcher group and the Butcher group.
 Though the (complex) tame Butcher group is not a Lie subgroup of the (complex) Butcher group, the Lie group morphisms $\alpha$ and $\alpha_{\R}$ still yield much information.
 In fact, we have already exploited this connection to establish the Lie group structure of the tame groups: Our computation of the derivative of the group operations was carried out in the complex Butcher group. We then used an additional argument to show that this is indeed the derivative in the tame Butcher group.

 In the rest of the paper we will frequently encounter this type of reasoning.
 Before we continue investigating the structure of the tame Butcher group let us first consider certain subgroups which appear already in Butcher's seminal work \cite{Butcher72} on the Butcher group.

 \begin{defn}
  For $n \in \N$ we let $\RT^{\leq n}$ be the subset of all rooted trees $\tau \in \RT$ with fewer than $n$ nodes, i.e.\ $0< |\tau | \leq n$.
  Similarly set $\RT_0^{\leq n} \coloneq \RT^{\leq n} \cup \{\emptyset\}$.
  Now define the subgroups
  \begin{displaymath}
  \cASUB := \{ a \in \tBGC \mid a (\tau) = 0 \quad \forall \tau \in \RT^{\leq n}\}
  \end{displaymath}
  and $\ASUB := \cASUB \cap \tBG$.
 \end{defn}

 \begin{prop}\label{prop: clnorm:sbgp}
  For each $n\in \N$, $\cASUB$ is a normal closed Lie subgroup of the complex tame Butcher group and $\ASUB$ is a normal closed subgroup of the tame Butcher group.
  Further, both subgroups are split submanifolds of finite codimension.
 \end{prop}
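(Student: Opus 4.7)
My plan is to realise $\cASUB$ as the kernel of a natural Lie group morphism from $\tBGC$ onto a finite-dimensional truncated Butcher group; closedness and normality then follow for free, and the split submanifold structure will come from a concrete direct sum decomposition of the model Silva space adapted to the finite index set $\RT^{\leq n}$.

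Since $\RT^{\leq n}$ is finite, truncation defines a map
\begin{displaymath}
 \pi_n \colon \tBGC \to G_n^{\C}, \quad a \mapsto a|_{\RT_0^{\leq n}},
\end{displaymath}
where $G_n^{\C} \coloneq \{b \colon \RT_0^{\leq n} \to \C \mid b(\emptyset) = 1\}$ carries the operation obtained by truncating \eqref{eq: BGP:mult} and \eqref{eq: BGP:invers} to $\RT_0^{\leq n}$. For $|\tau| \leq n$ every ordered subtree $s_\tau$ and every tree $\theta$ appearing in a forest $\tau \setminus s$ or $\tau \setminus p$ has at most $n$ vertices, so the truncated formulas coincide with the restrictions of the group operations of $\tBGC$, and $\pi_n$ becomes a group homomorphism. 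The target is a finite-dimensional complex Lie group (the operations are polynomial on a finite-dimensional space), and each component $\ev_\tau \circ \pi_n = \ev_\tau|_{\tBGC}$ is continuous and affine by Corollary \ref{cor: ev:cont}, so $\pi_n$ is complex analytic and hence a morphism of complex Lie groups. Its kernel is visibly $\cASUB$, which is therefore automatically a closed normal subgroup of $\tBGC$.

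To establish the split submanifold property I would decompose the model space as $\Msp{\C} = V_{\leq n} \oplus V_{>n}$, where $V_{\leq n} \coloneq \{x \in \Msp{\C} \mid x(\tau) = 0 \text{ for all } \tau \notin \RT^{\leq n}\}$ is finite-dimensional of dimension $|\RT^{\leq n}|$, and $V_{>n} \coloneq \{x \in \Msp{\C} \mid x(\tau) = 0 \text{ for all } \tau \in \RT^{\leq n}\}$ is closed as an intersection of kernels of the continuous evaluations from Corollary \ref{cor: ev:cont}. The projection onto $V_{\leq n}$ factors through finitely many of these evaluations into a finite-dimensional target, so it is continuous and the decomposition is topological. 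Consequently $\cASUB = e + V_{>n}$ is a closed split submanifold of $\tBGC$ of finite codimension $|\RT^{\leq n}|$. The real statements follow by restriction: the real analogue of $\pi_n$ takes $\tBG$ to the real truncated Butcher group with kernel $\ASUB$, and the decomposition of $\Msp{\C}$ restricts to $\Msp{\R}$ via Proposition \ref{prop: complexification}. The only step requiring a moment of thought is the continuity of the finite-dimensional projection on the Silva space, which is immediate once one observes that each $\ev_\tau$ is continuous on every Banach step of the inductive limit.
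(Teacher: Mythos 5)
Your proposal is correct, and on the closedness and split-submanifold parts it coincides in substance with the paper's argument: the paper also writes $\cASUB$ as the preimage of a point under the (holomorphic) evaluation map $\mathrm{Ev}_n \colon \tBGC \rightarrow \prod_{\tau \in \RT^{\leq n}} \C$ and then observes that $\cASUB$ is an affine subspace of $\Mo{\C}$ whose direction is closed of finite codimension, hence complemented. Where you genuinely differ is in the treatment of normality: the paper disposes of it with ``an easy computation'' (alternatively citing Butcher), whereas you upgrade $\mathrm{Ev}_n$ to a group homomorphism $\pi_n$ onto the truncated group $G_n^{\C}$ and obtain normality and closedness simultaneously from the kernel description. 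The combinatorial point you need for this -- that for $|\tau|\leq n$ every $s_\tau$ with $s \in \OST(\tau)$ and every tree in the forest $\tau\setminus s$ has at most $n$ vertices, so the product formula at $\tau$ only sees the restrictions to $\RT_0^{\leq n}$ -- is exactly right, and it is also what justifies the identification $\tBGC/\cASUB \cong G^{\leq n}$ that the paper only records in the remark following the proposition; your route makes that remark a by-product rather than an afterthought. One small point worth stating explicitly: the group axioms on $G_n^{\C}$ are inherited from $\tBGC$ because $\pi_n$ is surjective (extend any truncated tree map by zero; the extension is trivially exponentially bounded), and the paper's caveat that kernels of Lie group morphisms need not be Lie subgroups in infinite dimensions is correctly handled by your separate direct-sum argument for the submanifold structure.
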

 \begin{proof}
 An easy computation using the definition of the group operations shows that $\cASUB$ and $\ASUB$ are normal subgroups of the (complex) tame Butcher group (alternatively this is proved in \cite[Theorem 7.5]{Butcher72}).
 Note that
 \begin{displaymath}
  \cASUB = \mathrm{Ev}_n^{-1} (0) \quad \quad \text{ for } \quad \mathrm{Ev}_n \colon \tBGC \rightarrow \prod_{\tau \in \RT^{\leq n}} \C, \quad a \mapsto (a(\tau))_{\tau \in \RT^{\leq n}}.
 \end{displaymath}
 Moreover, $\mathrm{Ev}_n$ is continuous (even holomorphic) by a combination of Corollary \ref{cor: ev:cont} and Lemma \ref{lem: aff:subs}, whence the subgroups are closed.

 Recall from \cite[Remark IV.3.17]{neeb2006} that closed subgroups of infinite-dimensional Lie groups will in general not be Lie subgroups.
 However, in this case, $\cASUB$ is an affine subspace of $\Mo{\C}$. Namely, we have
 \begin{displaymath}
  \cASUB = e + \{\an \in \Mo{\C} \mid \an (\tau) = 0 \quad \forall \tau \in \RT_0^{\leq n}\}.
 \end{displaymath}
 Hence $\cASUB$ is a subgroup which is a closed submanifold, i.e.\ it is a closed Lie subgroup.
 Now the closed subspace $\{\an \in \Mo{\C} \mid \an (\tau) = 0 \quad \forall \tau \in \RT_0^{\leq n}\}$ is of finite codimension, whence it is in particular a complemented subspace of $\Mo{\C}$.
 \end{proof}

 \begin{rem}
  \begin{enumerate}
   \item Contrary to situation for finite-dimensional Lie groups, a quotient of an infinite-dimensional Lie group modulo a closed subgroup will in general not be a manifold in a canonical way.
  However, \cite[Theorem G]{hg2015} together with Proposition \ref{prop: clnorm:sbgp} asserts that $\tBGC / \cASUB$ (or $\tBG/\ASUB$) is a complex (or real) analytic finite-dimensional Lie group if $\cASUB$ (or $\ASUB$) is a regular Lie group.
  Hence as a consequence of Corollary \ref{cor: sbgp:reg} below, the group $\tBGC / \cASUB$ is a Lie group such that the canonical quotient map $q \colon \tBGC \rightarrow \tBGC/\cASUB$ is a (complex analytic) submersion.
  A similar assertion holds for $\tBG/\ASUB$.
  \item The quotient Lie groups $\tBGC / \cASUB$ and $\tBG/\ASUB$ are closely related to the (complex) Butcher group.
  It is easy to see that the group $\tBGC/\cASUB$ can be identified with the group
  \begin{displaymath}
   G^{\leq n} \coloneq \{a \in \C^{\RT_0^{\leq n}} \mid a(\emptyset)=1\}
  \end{displaymath}
  where the group operations arise by restricting the group operations of the (complex) Butcher group, i.e.~\eqref{eq: BGP:mult} and \eqref{eq: BGP:invers} to $\tau\in \RT_0^{\leq n}$.
  Notice that the (complex) Lie group $\BGC$ is the projective limit\footnote{Observe that $\BGC$ is a pro-Lie group by \cite[Section 5]{1501.05221v3}.} of the family $(G^{\leq n})_{n \in \N}$, where $G^{\leq n} \leftarrow G^{\leq n+1}$ is the restriction map.
  Similar arguments yield an analogous result for the real Lie groups $\tBG / \ASUB$.
  \end{enumerate}
 \end{rem}

 In the following sections we will discuss Lie theoretic properties of the tame Butcher groups and the subgroups $\cASUB$ and $\ASUB$.

 \section{The Lie algebra of the tame Butcher group}\label{sect: BG-LAlg}

In this section, the Lie algebra of the (complex) tame Butcher group will be determined.
Again it suffices to determine the Lie algebra of the complex tame Butcher group since the Lie algebra of the tame Butcher group arises as the real part of the complex Lie algebra.

\begin{setup}
As usual, we use the derivative of the global chart $\Psi \colon \tBGC \rightarrow \Mo{\C}, a \mapsto a-e$ to identify the Lie algebra $\Lf(\tBGC)$ of $\tBGC$ with $\Mo{\C}$ together with a suitable Lie bracket.
Recall from \cite{BS14} that $\Phi \colon \BGC \rightarrow \{a \in \C^{\RT_0} \mid a (\emptyset) = 0\} \equalscolon (\C^{\RT_0})_*$ is a global chart of the complex Butcher group.
Similar to the case of the tame Butcher group we use the derivative of $\Phi$ to identify the Lie algebra $\Lf (\BGC)$ with $(\C^{\RT_0})_*$.
\smallskip

Now the inclusion map $\alpha \colon \tBGC \rightarrow \BGC$, which is a smooth Lie group morphism by Corollary \ref{cor: cplx: inc}, such that $\Phi \circ \alpha \circ \Psi^{-1}$ is the canonical inclusion $i_\C \colon \Mo{\C} \rightarrow (\C^{\RT_0})_*$.
The identification of the Lie algebras now takes the Lie algebra morphism $\Lf (\alpha) = T_e \alpha$ to $i_\C$.
In conclusion, the Lie bracket of the complex tame Butcher group is the restriction of the Lie bracket of the complex Butcher group, i.e.\ it is given by the formula in \cite[Theorem 2.4]{BS14}.
\end{setup}

 To state the formula for the Lie bracket, recall some notation.

 \begin{defn}\label{defn: split}
  Let $\tau \in \RT_0$ be a rooted tree.
  We define the \emph{set of all splittings} as
    \begin{displaymath}
     \SP{\tau} \coloneq \{s \in \OST (\tau) \mid \tau \setminus s \text{ consists of only one element}\}
    \end{displaymath}
  Furthermore, define the set of \emph{non-trivial splittings}
  $\SP{\tau}_1 \coloneq \{\theta \in \SP{\tau} \mid \theta \not \in \{\emptyset, \tau\}\}$.
  Observe that for each tree $\tau$ the order of trees in $\SP{\tau}_1$ is strictly less than $|\tau|$.
  %Thus for the tree $\bullet$ with exactly one node $\SP{\bullet}_1 = \emptyset$.
 \end{defn}

\begin{thm}\label{thm: tameBG:LA}
  The Lie algebra of the complex tame Butcher group is $(\Msp{\C}, \LB )$, where the Lie bracket $\LB[\an , \bn]$ for $\an ,\bn \in \Msp{\C}$ is
  \begin{equation}\label{eq: LB}
   \LB[\an,\bn] (\tau) = \sum_{s \in \SP{\tau}_1} \left(\bn(s_{\tau}) \an (\tau \setminus s) - \an(s_{\tau}) \bn (\tau \setminus s)  \right)
  \end{equation}
 By \eqref{eq: LB} $\LB$ restricts to a Lie bracket on $\tBG$.
 The Lie algebra of the tame Butcher group is $(\Msp{\R}, \LB )$, with the bracket induced by $\Lf(\tBGC)$ on the subspace $\Msp{\R}$.
\end{thm}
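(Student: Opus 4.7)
The setup preceding the statement already reduces the problem substantially: the identification $\Lf(\tBGC) \cong \Msp{\C}$ together with the fact that $\Lf(\alpha)$ is the canonical inclusion $\Msp{\C} \hookrightarrow (\C^{\RT_0})_*$ forces the bracket on $\Msp{\C}$ to be the restriction of the Lie bracket on $\Lf(\BGC)$ recorded in \cite[Theorem 2.4]{BS14}. The plan is therefore to (i) rewrite this restricted formula in the simplified shape \eqref{eq: LB}, (ii) observe that the right hand side automatically lies in $\Msp{\C}$, and (iii) deduce the real analogue via the complexification picture.

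For (i) I would expand the bracket formula of \cite[Theorem 2.4]{BS14}, whose sum is indexed by ordered subtrees $s \in \OST(\tau)$ and mirrors the structure of the multiplication \eqref{eq: BGP:mult}. A summand attached to $s$ carries the factor $\an(\tau \setminus s) = \prod_{\theta \in \tau \setminus s} \an(\theta)$, a monomial of degree $|\tau \setminus s|$ in $\an$. Since the Lie bracket arises as the antisymmetric bilinear part of the group product, only bidegree-$(1,1)$ monomials in $(\an,\bn)$ survive, forcing $|\tau \setminus s| = 1$, i.e.\ $s \in \SP{\tau}_1$; the trivial extremes $s = \emptyset$ and $s = \tau$ contribute only to the linear term $\an + \bn$ of the product and cancel under antisymmetrisation. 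Rewriting accordingly yields exactly \eqref{eq: LB}.

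For (ii), $\SP{\emptyset}_1 = \emptyset$ gives $\LB[\an,\bn](\emptyset) = 0$, and if $|\an(\tau)|, |\bn(\tau)| \leq C K^{|\tau|}$ then the elementary bound $|\OST(\tau)| \leq 2^{|\tau|}$ already used in Lemma \ref{lem: tame:subgp} yields $|\LB[\an,\bn](\tau)| \leq 2 C^2 (2K)^{|\tau|}$, hence $\LB[\an,\bn] \in \Msp{\C}$. This containment is of course redundant once one recalls that $\tBGC$ is a Lie group modelled on $\Msp{\C}$, but it gives an honest direct verification. For (iii), the coefficients in \eqref{eq: LB} are real-polynomial in the values $\an(\theta), \bn(\theta)$, so the bracket preserves the real form $\Msp{\R} \subseteq \Msp{\C}$; combined with Proposition \ref{prop: complexification} and Corollary \ref{cor: LGP:compl}, this identifies $(\Msp{\R}, \LB)$ as the Lie algebra of $\tBG$. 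The only genuinely nontrivial point is the combinatorial bookkeeping in (i) -- recognising precisely which ordered subtrees survive the antisymmetric bilinear part of the product; the rest is formal.
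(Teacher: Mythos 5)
Your proposal is correct and follows essentially the same route as the paper: the identification of $\Lf(\alpha)$ with the canonical inclusion $\Msp{\C}\hookrightarrow(\C^{\RT_0})_*$ forces the bracket to be the restriction of the bracket of $\Lf(\BGC)$ from \cite[Theorem 2.4]{BS14}, and the real case follows from the complexification picture. Your step (i) is harmless but redundant, since the formula in \cite[Theorem 2.4]{BS14} is already stated in exactly the shape \eqref{eq: LB}, and your step (ii) is, as you note yourself, automatic from the fact that $\tBGC$ is a Lie group modelled on $\Msp{\C}$.
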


\begin{rem}
 The crucial point of the whole construction is that the Lie bracket given via \eqref{eq: LB} is continuous with respect to the Silva space topology.
\end{rem}

\begin{cor}
 For $n\in \N$ the Lie algebra of the Lie subgroup $\cASUB \subseteq \tBGC$ (cf.\ Proposition \ref{prop: clnorm:sbgp}) is the Lie subalgebra
 \begin{displaymath}
   \Lf (\cASUB) = \{\an \in \Lf (\tBGC) \mid \an (\tau) = 0 \quad \forall \tau \in \RT^{\leq n}_0 \}.
 \end{displaymath}
 Further, the Lie algebra of $\ASUB$ is given by $\Lf (\ASUB) = \Lf (\cASUB) \cap \Lf (\tBG)$.
\end{cor}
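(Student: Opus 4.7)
The plan is to read off $\Lf(\cASUB)$ from the explicit affine-submanifold description of $\cASUB$ already given in the proof of Proposition \ref{prop: clnorm:sbgp}, and then deduce the real statement from the complexification relation in Corollary \ref{cor: LGP:compl}.

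Concretely, the global chart $\Psi \colon \tBGC \to \Mo{\C}$, $a \mapsto a - e$, identifies $\Lf(\tBGC)$ with $\Msp{\C}$. Under this chart, $\cASUB$ is the closed affine subspace $e + \{\an \in \Mo{\C} \mid \an(\tau) = 0 \ \forall \tau \in \RT_0^{\leq n}\}$ of $\Mo{\C}$. Since the Lie algebra of a split Lie subgroup is its tangent space at the identity, which for an affine submanifold coincides with its model subspace, applying $T_e \Psi$ yields
\[ \Lf(\cASUB) = \{\an \in \Msp{\C} \mid \an(\tau) = 0 \ \forall \tau \in \RT^{\leq n}\}, \]
where the constraint at $\emptyset$ has been absorbed into the defining condition $\Msp{\C} = \{\an \in \Mo{\C} \mid \an(\emptyset) = 0\}$. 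This gives the displayed formula.

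As a cross-check (not strictly necessary, since closure under the bracket is automatic for a Lie subgroup), one may verify directly from formula \eqref{eq: LB} that the right-hand side is a Lie subalgebra: for $\an, \bn$ in it and $\tau \in \RT^{\leq n}$, every non-trivial splitting $s \in \SP{\tau}_1$ satisfies $1 \leq |s_\tau|,\ |\tau \setminus s| \leq |\tau| - 1 < n$, so both products $\bn(s_\tau)\an(\tau \setminus s)$ and $\an(s_\tau)\bn(\tau \setminus s)$ vanish, and thus $\LB[\an,\bn](\tau) = 0$.

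The real assertion follows immediately. By Corollary \ref{cor: LGP:compl}, $\Lf(\tBG)$ sits inside $\Lf(\tBGC)$ as $\Msp{\R} \subseteq \Msp{\C}$, and $\ASUB = \cASUB \cap \tBG$ is the analogous split real-analytic submanifold of $\tBG$, so intersecting the description of $\Lf(\cASUB)$ with $\Msp{\R}$ gives $\Lf(\ASUB) = \Lf(\cASUB) \cap \Lf(\tBG)$. I expect no substantial obstacle here: Proposition \ref{prop: clnorm:sbgp} has already supplied the affine-submanifold structure, and everything else is a routine linearisation in the global chart together with the combinatorial observation above.
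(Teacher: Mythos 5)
Your argument is correct and is essentially the paper's own proof, which simply reads off $\Lf(\ASUB^{\C})$ from the affine-subspace description of the subgroup established in Proposition \ref{prop: clnorm:sbgp}; your additional bracket-closure check via \eqref{eq: LB} and the explicit complexification remark are harmless elaborations of the same route.
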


\begin{proof}
 Follows directly from the description of the subgroups as affine subspaces in the proof of Proposition \ref{prop: clnorm:sbgp}.
\end{proof}

\section{Regularity properties of the tame Butcher group}\label{sect: regularity}

 Let us now discuss regularity properties of the (complex) tame Butcher group.
 Since we also want to establish regularity properties for the complex Lie group $\tBGC$ several comments are needed:

 Recall that holomorphic maps are smooth with respect to the underlying real
 structure by \cite[Proposition 2.4]{hg2002a},
 whence $\tBGC$ carries the structure of a real Lie group.
 Now the complex Lie group $\tBGC$ is called regular, if the underlying real Lie group is regular (in the sense outlined in the introduction).
 Thus for this section we fix the following convention:
 \medskip

 \textit{Unless stated explicitly otherwise, all complex vector spaces in this section are to be understood as the underlying real locally convex vector spaces.
  Moreover, differentiability of maps is understood to be differentiability with respect to the field $\R$.}
 \medskip

 In \cite{BS14} we proved that the (complex) Butcher group $\BGC$ is $C^k$-regular for each $k\in \N_0$.
 Fix $k \in \N_0 \cup \{ \infty\}$ and let $\Evol_{\BGC} \colon C^k ([0,1],\Lf (\BGC)) \rightarrow C^{k+1} ([0,1],\BGC)$ be the curve evolution map,
 i.e.\ the curve $\Evol_{\BGC} (\eta)$ solves the differential equation of regularity for $\eta$.
 Let us use curves as candidates for the evolution of smooth curves in $C^k ([0,1] , \Lf (\tBGC))$.

 \begin{setup}[Semiregularity of $\tBGC$ via $\BGC$]\label{setup: comp:diffeq}
 By Corollary \ref{cor: cplx: inc} the continuous linear inclusion $I_\C \colon \Mo{\C} \rightarrow \K^{\RT_0}$ restricts to a Lie group morphism $\alpha \colon \tBGC \rightarrow \BGC$.
 Its derivative $\Lf (\alpha) \colon \Lf (\tBGC) \rightarrow \Lf (\BGC)$ is the inclusion of $\Lf (\tBGC)$ into $\Lf (\BGC)$ (albeit the topology $\Lf (\alpha)$ induced on its image is finer than the subspace topology).
 Each $C^k$-curve $\eta \colon [0,1] \rightarrow \Lf (\tBGC)$ yields a $C^k$-curve $\Lf (\alpha) \circ \eta \colon [0,1] \rightarrow \Lf (\BGC)$.
 As a consequence of \cite[Lemma 10.1]{HG15reg} we see that if $\eta \in C^k ([0,1] , \Lf (\tBGC))$ admits a $C^{k+1}$-evolution $\gamma_\eta \colon [0,1] \rightarrow \tBGC$ then it must satisfy
  \begin{equation}\label{eq: evol:eq}
   \alpha \circ \gamma_\eta = \Evol_{\BGC} (\Lf (\alpha ) \circ \eta)
  \end{equation}
 From this formula and \cite[Section 4]{BS14} we obtain a description of $\gamma_\eta$ in terms of its components.
 A combination of \eqref{eq: evol:eq} and \cite[Eq. (8)]{BS14} yields for $\tau \in \RT_0$ the formula
  \begin{align}\label{eq: diffeq:comp}
   \gamma_\eta' (t) (\tau) = \ev_{\tau} \circ (\alpha \circ \gamma_\eta)' (t) &= \ev_{\tau} (\eta (t)) +  \sum_{s \in \SP{\tau}_1} \ev_{s_{\tau}}(\gamma_\eta (t)) \eta (t)(\tau \setminus s).
  \end{align}
 For each $\tau \in \RT_0$ the differential equation \eqref{eq: diffeq:comp} is a differential equation in $\C$ which can be solved by integrating the right-hand side if  the $\gamma_\eta (\cdot) (s_\tau)$ are known for all $s \in \SP{\tau}_1$.
 As the trees in $\SP{\tau}_1$ have strictly less nodes than $\tau$, these differential equations can be solved iteratively.
 Summing up, the group $\tBGC$ will be $C^k$-semiregular if we can show that the solutions to the system of differential equations \eqref{eq: diffeq:comp} are exponentially bounded for all $\eta \in \tBGC$.
 \end{setup}

 \begin{lem}\label{lem: factor:evolution}
  Let $\eta \in C^0([0,1] , \Lf (\tBGC))$ be a curve with $\sup_{t \in [0,1]}\norm{\eta (t)}_{\omega_m} \leq 1$ for some $m \in \N$.
  Consider the map $\gamma_\eta (t) \coloneq \Evol_{\BGC} (\Lf (\alpha) \circ \eta) (t) \colon [0,1] \rightarrow \BGC$.
  Then
%  \begin{enumerate}
%   \item
$\gamma_\eta$ satisfies $\sup_{t \in [0,1]} \norm{\gamma_\eta (t)-e}_{\omega_{m+1}} \leq 1$.
  In particular $\gamma_\eta$ takes its image in $\C^{\RT_0} (\omega_{m+1})$ and factors through $\tBGC$.
%   \item The induced map $\gamma_\eta \colon [0,1] \rightarrow \C^{\RT_0} (\omega_{m+1}), t \mapsto \gamma_\eta (t)$ is continuous.
%  \end{enumerate}
 \end{lem}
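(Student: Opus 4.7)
The strategy is to exploit the triangular structure of the componentwise ODE \eqref{eq: diffeq:comp} for $\gamma_\eta$ in $\BGC$: the right-hand side of the equation for $\gamma_\eta(\cdot)(\tau)$ depends only on $\gamma_\eta(\cdot)(\theta)$ with $|\theta| < |\tau|$, since every $s \in \SP{\tau}_1$ yields $s_\tau$ and $\tau\setminus s$ of strictly smaller order than $\tau$. Consequently the system can be solved recursively on $|\tau|$, and I will derive the norm bound by the matching induction.

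First translate the hypothesis using $\omega_{m+1}(\theta)=\omega_m(\theta)\cdot 2^{-|\theta|}$: the assumption $\|\eta(s)\|_{\omega_m}\leq 1$ is equivalent to $|\eta(s)(\theta)|\,\omega_{m+1}(\theta)\leq 2^{-|\theta|}$ for every $\theta\in\RT$ and $s\in[0,1]$. Integrating \eqref{eq: diffeq:comp} from $0$ to $t$ (with $\gamma_\eta(0)=e$), multiplying by $\omega_{m+1}(\tau)$, and applying the multiplicativity $\omega_{m+1}(\tau)=\omega_{m+1}(s_\tau)\omega_{m+1}(\tau\setminus s)$ from Remark \ref{rem: weights:mult} yields, for each $\tau\in\RT$,
\begin{align*}
|\gamma_\eta(t)(\tau)|\,\omega_{m+1}(\tau)\;\leq\; t\cdot 2^{-|\tau|} + \int_0^t\sum_{s\in\SP{\tau}_1}|\gamma_\eta(r)(s_\tau)|\,\omega_{m+1}(s_\tau)\cdot 2^{-|\tau\setminus s|}\,dr.
\end{align*}

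The base case $|\tau|=1$ is immediate since $\SP{\onenode}_1=\emptyset$, giving $|\gamma_\eta(t)(\onenode)|\,\omega_{m+1}(\onenode)\leq t/2\leq 1/2$. For the inductive step I substitute the inductive hypothesis into the integrand. The main obstacle is that $\SP{\tau}_1$ is in bijection with the $|\tau|-1$ edges of $\tau$, and for star-shaped trees the combinatorial sum $\sum_{s\in\SP{\tau}_1}2^{-|\tau\setminus s|}$ grows like $(|\tau|-1)/2$; hence the naive hypothesis ``$\leq 1$'' fails to close the induction. I would therefore sharpen the hypothesis to a tree-dependent polynomial-in-$t$ bound $P_\tau(t)$ defined by the recursion implicit in the integral inequality above, and verify $P_\tau(t)\leq 1$ uniformly in $\tau$ and $t\in[0,1]$. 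The worst configuration is a star $\tau_n$, where a direct computation gives $P_{\tau_n}(1)=(2^n-1)/(n\cdot 2^n)\leq 1/n$; any other tree of the same order produces a smaller sum $\sum_{s\in\SP{\tau}_1}2^{-|\tau\setminus s|}$ at each recursion step and therefore admits a strictly smaller bound. This establishes $\sup_{t\in[0,1]}\|\gamma_\eta(t)-e\|_{\omega_{m+1}}\leq 1$, which places $\gamma_\eta(t)$ in $\C^{\RT_0}(\omega_{m+1})\subseteq\Mo{\C}$ and thereby factors $\gamma_\eta$ through $\tBGC$.
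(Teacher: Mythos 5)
Your proposal is correct and follows essentially the same route as the paper: both solve the triangular componentwise system \eqref{eq: diffeq:comp} by induction on $|\tau|$ and majorize $\abs{\gamma_\eta(t)(\tau)}$ by a polynomial in $t$, and your tree-dependent bound $P_\tau(t)$ is, after multiplying by $2^{|\tau|}$, exactly the paper's uniform majorant $\tfrac{1}{|\tau|}\bigl((1+t)^{|\tau|}-1\bigr)$ evaluated at the bushy tree. The only loose point is your justification that the star is extremal: besides the sum $\sum_{s}2^{-|\tau\setminus s|}$ being maximal there, one also needs that the subtree orders $|s_\tau|$ are maximal together with monotonicity of the star bounds in the order, which is precisely what the paper's order-dependent majorant (using $P_{k}\leq P_{k+1}$ and $|s_\tau|\leq|\tau|-1$) packages cleanly.
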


 \begin{proof}%[Proof of Lemma \ref{lem: factor:evolution}]
Use the component wise formula \eqref{eq: diffeq:comp} for the derivative of $\gamma_\eta$ to obtain an estimate of the growth of $\gamma_\eta$.
  We claim that $\gamma_{\eta}$ satisfies the inequality
  \begin{equation}
  |\gamma_{\eta}(t)(\tau)|\omega_m(\tau) \leq P_{|\tau|}(t) , \quad \forall t \in [0,1]
  \label{eq: growthest}
  \end{equation}
  where $P_{k}(t) = \frac{1}{k}((1+t)^k-1)$.

  Note that $P_{k+1}(t)\geq P_k(t)$ for all $k\in \N$ and $t\ge 0$, and that
  for $t\in [0,1]$ equation \eqref{eq: growthest} implies $|\gamma_{\eta}(t)(\tau)|\omega_m(\tau) \leq 2^{|\tau|}= \frac{\omega_m(\tau)}{\omega_{m+1}(\tau)}$.
  Hence if the claim holds, this implies $\sup_{t \in [0,1]} \norm{\gamma_\eta (t)}_{\omega_{m+1}} \leq 1$ and the assertion follows.

  \textbf{Proof of the claim}: For the one-node tree $\bullet$, $\omega_m(\bullet)=\tfrac{1}{2^m}$ holds and thus
    \begin{align*}
     \abs{\gamma_\eta (t) (\bullet)}\omega_{m} (\bullet) &= \abs{\int_0^t \eta (s)(\bullet)\di s} \omega_m (\bullet) \leq t \sup_{s \in [0,1]} \abs{\eta (s)(\bullet)} \omega_m (\bullet) \\
						       &\leq t \sup_{s\in [0,1]} \norm{\eta (s)}_{\omega_m} \le t = P_{|\bullet|} (t).
    \end{align*}
  Now, let $\tau$ be a tree with $|\tau|>1$ and assume that \eqref{eq: growthest} holds for all trees $\tau'$ with $|\tau'|<|\tau|$.
  By integrating $\eqref{eq: evol:eq}$ and taking absolute values, we get
  \begin{align*}
     \abs{\gamma_\eta (t) (\tau)}=&\abs{\int_0^t\ev_{\tau} (\eta (r)) +  \sum_{s \in \SP{\tau}_1} \ev_{s_{\tau}}(\gamma_\eta (r))\eta (r)(\tau \setminus s)\di r} \\
      \le &\int_0^t\abs{\ev_{\tau} (\eta (r))} +  \sum_{s \in \SP{\tau}_1}  \abs{\ev_{s_{\tau}}(\gamma_\eta (r))} \cdot \abs{\eta (r)(\tau \setminus s)}\di r.
  \end{align*}
  Multiplying both sides with $\omega_m(\tau)$ and using that $\omega_m$ is multiplicative, we get
  \begin{align*}
   \abs{\gamma_\eta (t) (\tau)}\omega_m(\tau)
        \le &\int_0^t\abs{\ev_{\tau} (\eta (r))}\omega_m(\tau) \\
         &+  \sum_{s \in \SP{\tau}_1}\abs{\ev_{s_{\tau}}(\gamma_\eta (r))}\omega_m(s_\tau) \cdot \abs{\eta (r)(\tau \setminus s)}\omega_m(\tau \setminus s)\di r \\
         \le &\int_0^t 1 + \sum_{s\in \SP{\tau}_1} P_{|s_\tau|}(r)\cdot 1 \di r .
  \end{align*}
  We now use that for all $s\in \SP{\tau}_1$ the tree $s_\tau$ satisfies $|s_\tau| \le |\tau|-1$ and that $\abs{\SP{\tau}_1} = |\tau|-1$ to get
  \begin{align*}
   \abs{\gamma_\eta (t) (\tau)}\omega_m(\tau) \le& \int_{0}^t 1 + (|\tau|-1)P_{|\tau|-1}(r) \di r \\
                                              \le& \int_0^t (1+t)^{|\tau|-1} \di r = \frac{1}{|\tau|} ((1+t)^{|\tau|}-1) = P_{|\tau|}(t).\quad \qedhere
  \end{align*}
\end{proof}

\begin{thm}
Let $G$ be either $\tBGC$ or $\tBG$, then $G$ is $C^1$-regular and $C^0$-semiregular.
Moreover, $\evol_{\tBGC}$ is complex analytic and $\evol_{\tBG}$ is real analytic.
\end{thm}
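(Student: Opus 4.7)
The plan is threefold: first, establish $C^0$-semiregularity of $\tBGC$ by combining Lemma~\ref{lem: factor:evolution} with the Silva-space structure of $\Lf(\tBGC)$; second, invoke Gl\"ockner's regularity theorem for Silva Lie groups \cite[Section~15]{HG15reg} to deduce $C^1$-regularity and complex analyticity of $\evol_{\tBGC}$; third, transfer everything to $\tBG$ via the complexification relation of Corollary~\ref{cor: LGP:compl}.

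For $C^0$-semiregularity I would start from a continuous curve $\eta\colon[0,1]\to\Lf(\tBGC)=\Msp{\C}$. Since $\Msp{\C}$ is a Silva space, the compact image $\eta([0,1])$ lies bounded in some Banach step $\C^{\RT_0}(\omega_k)$; as $\norm{\cdot}_{\omega_{m}}\le 2^{-(m-k)}\norm{\cdot}_{\omega_k}$ on $\Msp{\C}$ for $m\ge k$, I can enlarge $m$ so that $\sup_{t\in[0,1]}\norm{\eta(t)}_{\omega_m}\le 1$. Lemma~\ref{lem: factor:evolution} then gives that the $\BGC$-evolution $\gamma_\eta=\Evol_{\BGC}(\Lf(\alpha)\circ\eta)$ factors through $\tBGC$ and is uniformly bounded in $\C^{\RT_0}(\omega_{m+1})$. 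What remains is to upgrade $\gamma_\eta$ from a curve continuous in the coarse pointwise topology of $\BGC$ to a $C^1$-curve in the finer Silva topology of $\tBGC$. For this I would use the component formula~\eqref{eq: diffeq:comp}: a product-type estimate in the spirit of~\eqref{eq: prod:est} bounds $\gamma'_\eta$ uniformly in some further Banach step $\C^{\RT_0}(\omega_M)$, and the integral identity $\gamma_\eta(t)-\gamma_\eta(s)=\int_s^t\gamma'_\eta(r)\,dr$ converts this into Lipschitz continuity of $\gamma_\eta$ into that Banach step. Since norm convergence in one step implies convergence in the Silva limit, $\gamma_\eta$ and $\gamma'_\eta$ are continuous into $\tBGC$, yielding the required $C^1$-evolution.

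The step from semiregularity to $C^1$-regularity is provided by Gl\"ockner's theorem on regularity of Silva Lie groups \cite[Section~15]{HG15reg}: $\tBGC$ is modelled on a Silva space (Theorem~\ref{thm: tameBGp:Lie}), the group operations locally factor through Banach steps (as in~\ref{setup: loc:operat}), and $C^0$-semiregularity has just been shown, so the theorem yields smoothness of $\evol_{\tBGC}$, in fact complex analyticity, because the local factorisations of the operations are themselves complex analytic by the proof of Theorem~\ref{thm: tameBGp:Lie}. For the statements about $\tBG$, Corollary~\ref{cor: LGP:compl} identifies $\tBGC$ as a complexification of $\tBG$; the Lie bracket of Theorem~\ref{thm: tameBG:LA} restricts to a real bracket on $\Lf(\tBG)$, so the evolution equation preserves the real form, $\evol_{\tBGC}$ restricts to $\evol_{\tBG}$, and the restriction of a complex analytic map to the real form is real analytic. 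I expect the main obstacle to be the very first gain: pushing the $\BGC$-evolution into the Silva topology of $\tBGC$, for which the one-step gain in the Banach-step index supplied by Lemma~\ref{lem: factor:evolution}, together with the companion estimate on $\gamma'_\eta$, is the essential input.
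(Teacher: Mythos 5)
Your overall architecture coincides with the paper's: normalise $\eta$ into the unit ball of some Banach step, feed it into Lemma \ref{lem: factor:evolution}, upgrade the $\BGC$-evolution $\gamma_\eta$ to a curve in the finer Silva topology of $\tBGC$, invoke \cite[Theorem 15.5]{HG15reg} for $C^1$-regularity and $C^0$-semiregularity, obtain analyticity of $\evol_{\tBGC}$, and pass to $\tBG$ via the complexification of Corollary \ref{cor: LGP:compl} (the paper cites \cite[Corollary 9.10]{HG15reg} for this last step). Where you genuinely differ is the topology upgrade: you bound $\gamma_\eta'$ in a further Banach step via \eqref{eq: diffeq:comp} and deduce a Lipschitz estimate for $\gamma_\eta$, whereas the paper notes that $\gamma_\eta - e$ lands in a ball of $L_{m+1}$ whose closure $K$ in $L_{m+2}$ is compact (the bonding maps are compact operators), so that on $K$ the Banach topology coincides with the pointwise topology of $(\C^{\RT_0})_*$ and continuity is automatic; \cite[Lemma 7.10]{HG15reg} then certifies $\gamma_\eta = \Evol_{\tBGC}(\eta)$. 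Your derivative estimate does work (multiplicativity of the weights and $\abs{\SP{\tau}_1} = |\tau|-1$ give a uniform bound on $\norm{\gamma_\eta'(t)}_{\omega_{m+2}}$), although you still owe the continuity of $\gamma_\eta'$ into a step and the identification of the step-topology derivative, points the paper delegates to the cited lemma.

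The genuine gap lies elsewhere: as the paper applies it, \cite[Theorem 15.5]{HG15reg} requires not only that evolutions exist for all $\eta$ in a zero-neighbourhood $P_m$ of each step, but also that the local evolution operator $\evol_m \colon P_m \rightarrow \tBGC$ is \emph{continuous in $\eta$}, together with a point-separating morphism $\alpha \colon \tBGC \rightarrow \BGC$ into a $C^0$-regular Lie group. Your proposal controls each individual curve $t \mapsto \gamma_\eta(t)$ but says nothing about the dependence on $\eta$; the observation that the group operations factor through Banach steps is not a substitute for this hypothesis. The paper gets the continuity essentially for free from the same compactness device: all evolutions of curves in $P_m$ take values in the fixed compact set $K + e$, on which the $\tBGC$- and $\BGC$-topologies agree, so continuity of $\evol_{\BGC}$ (known from \cite[Theorem C]{BS14}) transfers. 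In your setup you would instead need a stability estimate comparing $\gamma_{\eta_1}$ and $\gamma_{\eta_2}$ in some $\norm{\cdot}_{\omega_M}$ in terms of $\sup_t \norm{\eta_1(t)-\eta_2(t)}_{\omega_m}$, a tree-induction analogue of Lemma \ref{lem: factor:evolution} that is plausible but absent. A smaller inaccuracy: the complex analyticity of $\evol_{\tBGC}$ is not deduced from analyticity of the local factorisations of the group operations, but from \cite[Lemma 9.8]{HG15reg} once smoothness of $\evol_{\tBGC}$ is established on the complex analytic Lie group $\tBGC$.
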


\begin{proof}
We begin with the complex case.
Consider a curve $\eta \in C^0([0,1] , \Lf (\tBGC))$ with $\sup_{t \in [0,1]}\norm{\eta (t)}_{\omega_m} \leq 1$ for some $m \in \N$.
Define $L_m \coloneq \{a \in \C^{\RT_0} (\omega_m)\mid a (\emptyset) = 0\}$.
By Lemma \ref{lem: factor:evolution} the map $\gamma_\eta = \Evol_{\BGC} (\Lf(\alpha) \circ \eta) - e$ takes its values in a ball in $L_{m+1}$.
Denote by $K$ the closure of this ball in $L_{m+2}$. Since the bonding map $L_{m+1} \rightarrow L_{m+2}$ is a compact operator $K$ is compact.

Since $K$ is compact the subspace topology on $K$ coincides with the topology induced by the continuous inclusion $L_{m+2} \rightarrow (\C^{\RT_0})_* = \{a \in \C^{\RT_0} \mid a(\emptyset)=0\}$.
As $\gamma_\eta - e$ is continuous as a map to $(\C^{\RT_0})_*$, it is thus continuous as a map to $L_{m+2}$.
We conclude that $\gamma_\eta -e$ is continuous as a map to $\Mo{\C}$.
Now \cite[Lemma 7.10]{HG15reg} implies that indeed $\gamma_\eta = \Evol_{\tBGC} (\eta)$.

Using that the topologies on the compact set $K+e$ induced by $\BGC$ and $\tBGC$ coincide, we deduce from the continuity of $\evol_{\BGC}$ that also $\evol_{\tBGC} (\eta) \in K+e \subseteq \tBGC$ is continuous as a function of $\eta$, where $\eta$ varies in $P_m \coloneq C ([0,1], B^{L_m}_1 (0)))$.

Hence for every step of the inductive limit $\C^{\RT_0} (\omega_m)$ we obtain a zero-neighbourhood $P_m \subseteq C([0,1] , \C^{\RT_0} (\omega_m))$ such that for every $\eta$ in $P_m$ an evolution $\Evol (\eta) (=\gamma_\eta)$ exists and the evolution operator $\evol_m \colon P_m \rightarrow \tBGC$ is continuous.
Furthermore, we have a Lie group morphism $\alpha \colon \tBGC \rightarrow \BGC$ to a $C^0$-regular Lie group (cf.\ \cite[Theorem C]{BS14}) which separates the points.
We can thus apply \cite[Theorem 15.5]{HG15reg} to see that $\tBGC$ is $C^1$-regular and $C^0$-semiregular.
As the Lie group is complex analytic, \cite[Lemma 9.8]{HG15reg} shows that $\evol_{\tBGC}$ is even complex analytic.

Now for the real case, observe that $\tBGC$ is a complexification of $\tBG$ by Corollary \ref{cor: LGP:compl}.
Hence the assertions for $\tBG$ follow from \cite[Corollary 9.10]{HG15reg}.
\end{proof}

 Note that the evolution in the complex tame Butcher group is, up to identification, given by the evolution in the complex Butcher group.
 Hence, \cite[Eq. (8)]{BS14} yields a formula for the evaluation of each component of the evolution of a curve $\eta \in C([0,1], \Lf (\tBGC))$ (cf.\ also \eqref{eq: diffeq:comp}).
 Namely, for each $\tau \in \RT_0$ we obtain
 \begin{equation}\label{eq: tameevolution}
  \ev_\tau \circ \Evol_{\tBGC} (\eta) (t) = \int_0^t \eta (r)(\tau) + \sum_{s \in \SP{\tau}_1} \ev_{s_\tau} (\Evol_{\tBGC} (\eta)(r)) \eta (r)(\tau\setminus s) \di r.
 \end{equation}
 Armed with this knowledge, we can assert that the normal subgroups of the tame Butcher group discussed in Proposition \ref{prop: clnorm:sbgp} are regular Lie subgroups.

\begin{cor}\label{cor: sbgp:reg}
 For each $n \in \N$, the Lie groups $\cASUB$ and $\ASUB$ of $\tBGC$ are $C^0$-semiregular and $C^1$-regular.
\end{cor}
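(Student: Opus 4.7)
The plan is to reduce regularity of $\cASUB$ to regularity of the ambient group $\tBGC$ by exploiting the explicit component-wise formula \eqref{eq: tameevolution} for the evolution.

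First, I would show $C^0$-semiregularity of $\cASUB$. Let $\eta \in C^0([0,1],\Lf(\cASUB))$. Viewing $\eta$ as a curve in $\Lf(\tBGC)$ via the inclusion $\Lf(\cASUB) \subseteq \Lf(\tBGC)$, the evolution $\gamma \coloneq \Evol_{\tBGC}(\eta) \colon [0,1] \rightarrow \tBGC$ exists. The goal is to verify that $\gamma(t) \in \cASUB$ for all $t$, i.e.\ $\gamma(t)(\tau) = 0$ for every $\tau \in \RT^{\leq n}$. I would proceed by induction on $|\tau|$. By hypothesis, $\eta(r)(\tau') = 0$ for every $\tau' \in \RT^{\leq n}$. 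For $\tau \in \RT^{\leq n}$, formula \eqref{eq: tameevolution} gives
\begin{displaymath}
\gamma(t)(\tau) = \int_0^t \eta(r)(\tau) + \sum_{s \in \SP{\tau}_1} \ev_{s_\tau}(\gamma(r))\, \eta(r)(\tau \setminus s)\, \di r.
\end{displaymath}
The first summand vanishes since $|\tau| \leq n$. For each $s \in \SP{\tau}_1$ the trees $s_\tau$ and $\tau \setminus s$ both have strictly fewer nodes than $\tau$, so $|\tau \setminus s| < n$ and $\eta(r)(\tau \setminus s) = 0$. Hence the integrand vanishes and $\gamma(t)(\tau) = 0$. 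Thus $\gamma$ takes its values in $\cASUB$ and, since $\cASUB$ is a closed split submanifold of $\tBGC$, $\gamma$ is $C^1$ as a curve into $\cASUB$; it is then the (unique) evolution $\Evol_{\cASUB}(\eta)$.

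Next, I would upgrade this to $C^1$-regularity. By Proposition \ref{prop: clnorm:sbgp}, $\cASUB$ is a split closed submanifold of $\tBGC$ with complemented model space, so $C^1([0,1],\Lf(\cASUB))$ is a complemented closed subspace of $C^1([0,1],\Lf(\tBGC))$ and $\cASUB$ admits a smooth local retraction from $\tBGC$. The evolution map $\evol_{\cASUB}$ is then simply the restriction-corestriction of $\evol_{\tBGC}$ to these splittings, hence complex analytic because $\evol_{\tBGC}$ is. This yields $C^1$-regularity of $\cASUB$, and with (complex) analyticity of the evolution as a bonus.

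For the real subgroup $\ASUB$, I would argue by complexification: $\cASUB$ is a complexification of $\ASUB$ in the sense of Definition \ref{defn: complexification} (the argument is exactly the one used in Corollary \ref{cor: LGP:compl}, restricted to the closed subgroup), and then \cite[Corollary 9.10]{HG15reg} transfers the regularity assertions from $\cASUB$ to $\ASUB$. The main technical point of the whole proof is the inductive vanishing argument in step one; once \eqref{eq: tameevolution} is at hand this is essentially bookkeeping, so I do not foresee a genuine obstacle.
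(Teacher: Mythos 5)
Your proposal is correct and follows essentially the same route as the paper: the core step is the same component-wise vanishing argument via \eqref{eq: tameevolution} (note that no induction is actually needed, since each term already vanishes because $\eta(r)(\tau\setminus s)=0$ for $|\tau\setminus s|<|\tau|\leq n$, irrespective of the values of $\gamma$). The only cosmetic differences are that the paper upgrades $C^0$-semiregularity to $C^1$-regularity by citing \cite[Lemma B.5]{SchmedingWockel15} for closed subgroups of $C^1$-regular Lie groups, where you argue directly via the split-submanifold structure, and it treats $\ASUB$ ``analogously'' rather than by complexification.
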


\begin{proof}
 For $\eta \in C([0,1],\Lf (\cASUB)$ we consider the formula \eqref{eq: tameevolution} for $\Evol_{\tBGC} (\eta) (t)$.
 Since non-trivial splittings of a rooted tree $\tau$ have strictly less nodes than $\tau$, we see that all terms in \eqref{eq: tameevolution} vanish if $\eta (t)(\tau)$ and $\eta(t) (\tau\setminus s)$ vanishes.
 Thus $\ev_\tau \circ \Evol_{\tBGC} (\eta) (t) =0$ for all $\tau \in \RT$ with $|\tau |\leq n$ if $\eta \in C([0,1], \Lf(\cASUB))$.
 We deduce that $\cASUB$ is $C^0$-semiregular. As $\cASUB$ is a closed subgroup of the $C^1$-regular Lie group $\tBGC$, \cite[Lemma B.5]{SchmedingWockel15} implies that $\cASUB$ is $C^1$-regular.
 Analogously one establishes the regularity conditions for $\ASUB$.
\end{proof}

\section{The tame Butcher group is not (locally) exponential}
\label{sect: nonexp}

In this section, we show that the tame Butcher group is neither exponential nor even locally exponential.
This is in contrast to the full Butcher group, which is exponential (cf.\ \cite[Section 5]{BS14}).
From the point of view of numerical analysis, the lack of exponentiality of the tame Butcher group is reflected by the fact that so-called modified equations are in general formal series that do not converge.
The construction of an element which is not exponentially bounded but whose image with respect to the exponential map is contained in $\tBG$ is, in essence, equivalent to the argument given in \cite[Example 7.1]{HLW2006}.
However, the approach as presented in the present paper stays entirely in the realm of tree coefficients.
Before we construct elements near the identity which are not contained in the image of the exponential map, let us prepare the proof with several remarks.

\begin{rem}
 To prove that the (complex) tame Butcher group is not (locally) exponential it obviously suffices to prove this for the tame Butcher group $\tBG$.
 Hence we will construct elements in $\tBG$ which are arbitrarily near the identity but not contained in the image of $\exp_{\tBG}$.
\end{rem}

\begin{defn}[{Lie group exponential \cite[Definition II.5.1]{neeb2006}}]
The \emph{Lie group exponential} $\exp_{\tBG} \colon \Lf (\tBG) \rightarrow \tBG$ is the unique map such that for each $x \in \Lf (\tBG)$ the assignment $\gamma_x (t) \coloneq \exp_{\tBG} (tx)$ is a one parameter group of $\tBG$ with $\gamma_x' (0) = x$.
\end{defn}

 It is easy to see that $\gamma_x (t) = \exp_{\tBG} (tx)$ solves the initial value problem
  \begin{equation}\label{eq: Lie:expo}
   \begin{cases}
    \gamma_x'(t) &= \gamma_x (t).x= T_e\lambda_{\gamma_x (t)} (x),  \\
    \gamma_x (0) &= e,
   \end{cases}
  \end{equation}
 where $\lambda_{\gamma_x (t)}$ is the left-translation by $\gamma_x (t)$.
 Since $\tBG$ is a regular Lie group, the differential equation \eqref{eq: Lie:expo} admits a unique solution.\footnote{See \cite{milnor1983} or alternatively \cite[Remark II.5.3]{neeb2006} combined with \cite[Proposition 1.3.6]{dahmen2011} (also cf.\ \cite[Lemma 7.2]{HG15reg}).}

 We will again exploit that the tame Butcher group is closely connected to the Butcher group.

 \begin{rem}\label{rem: candidates}
  In \cite{BS14} we have shown that the Butcher group $\BGp$ is an exponential Lie group, i.e.\ its Lie group exponential is a diffeomorphism $$\exp_{\BGp} \colon \Lf (\BGp) \rightarrow \BGp.$$
  Recall from Corollary \ref{cor: real: inc} that the inclusion $\alpha_\R \colon \tBG \rightarrow \BGp$ is a Lie group morphism.
  By construction the induced map $\Lf (\alpha_\R) \colon \Lf (\tBG) \rightarrow \Lf(\BGp)$ coincides with the inclusion of $\Lf (\tBG)$ into $\Lf(\BGp)$.
  Now the naturality of the Lie group exponential maps asserts that for all $x \in \Lf (\tBG)$ we have with the obvious identifications
   \begin{displaymath}
   \exp_{\tBG} (x)= \alpha_\R \circ \exp_{\tBG} (x) = \exp_{\BGp} \circ \Lf (\alpha_\R)(x) = \exp_{\BGp} (x).
   \end{displaymath}
 Thus for an element $a \in \tBG$ the only candidate for a logarithm $\log_{\tBG} (a)$ (i.e.\ an element in $\Lf (\tBG)$ with $a = \exp_{\tBG} (\log_{\tBG} (a))$) is $\log_{\BGp} (\alpha_\R (a)) = \log_{\BGp} (a)$.
 \end{rem}

 Our aim is now to construct a logarithm for elements in $\tBG$ whose coefficients on certain trees are not exponentially bounded.
 The idea is that for the so called ``bushy trees'' we can derive an explicit formula for the coefficients of a certain logarithm.

 \begin{nota}
  Let $\tau_n$ be the $n$th bushy tree, i.e.~the tree obtained by connecting $n-1$ singular nodes to a common root.
  To illustrate the special form of these trees, we depict the first bushy trees below:
  \begin{displaymath}
     \begin{tikzpicture}[dtree,scale=1.5]
             \node[dtree black node] {}
                        ;
           \end{tikzpicture}, \quad \quad
               \begin{tikzpicture}[dtree,scale=1.5]
             \node[dtree black node] {}
             child { node[dtree black node] {} }
             ;
           \end{tikzpicture}, \quad \quad
           \begin{tikzpicture}[dtree, scale=1.5]
                              \node[dtree black node] {}
                              child { node[dtree black node] {}
                              }
                              child {node[dtree black node] {}}
                              ;
                            \end{tikzpicture}, \quad \quad
           \begin{tikzpicture}[dtree, scale=1.5]
                     \node[dtree black node] {}
                     child { node[dtree black node] {}
                     }
                     child {node[dtree black node] {}}
                     child { node[dtree black node] {}
                     }
                     ;
                   \end{tikzpicture}, \quad \quad
          \begin{tikzpicture}[dtree, scale=1.5]
                     \node[dtree black node] {}
                     child { node[dtree black node] {}
                     }
                     child {node[dtree black node] {}}
                     child { node[dtree black node] {}}
                      child { node[dtree black node] {}}
                     ;
                   \end{tikzpicture},\quad \ldots
  \end{displaymath}
 \end{nota}

\begin{prop}
\label{prop: nonexpcoeff}
For $\ve \in \R \setminus \{0\}$ let $a_\ve \in \tBG$ be defined by
\begin{displaymath}
   a_{\ve}(\tau) = \begin{cases}
   1, & \text{if $\tau= \emptyset$,} \\
   \ve,   & \text{if $\tau=\onenode$,} \\
   0 & \text{else,}
   \end{cases}
   \end{displaymath}
and $\eta_\ve = \log_{\BGp}(a_\ve)\in \Lf(\BGp)$ be the logarithm of $a_\ve$ in the full Butcher group.
Then $\eta_\ve(\tau_n)=\ve^n B_{n-1}$, where $B_{n}$ is the $n$th Bernoulli number.
\end{prop}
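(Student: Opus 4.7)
The plan is to exploit that $\eta_\ve=\log_{\BGp}(a_\ve)$ is characterised by $\gamma(t)\coloneq\exp_{\BGp}(t\eta_\ve)$ satisfying $\gamma(0)=e$, $\gamma(1)=a_\ve$, together with the right-invariant ODE $\gamma'(t)=\gamma(t).\eta_\ve$. Specialising \eqref{eq: diffeq:comp} to the constant curve $\eta(t)\equiv \eta_\ve$, this reads componentwise as
\begin{equation*}
\gamma'(t)(\tau) \;=\; \eta_\ve(\tau) \;+\; \sum_{s\in\SP{\tau}_1}\gamma(t)(s_\tau)\,\eta_\ve(\tau\setminus s), \qquad \gamma(0)(\tau)=0 \text{ for } \tau\in\RT.
\end{equation*}
Since $a_\ve$ vanishes on $\RT\setminus\{\bullet\}$, the boundary condition $\gamma(1)(\tau_n)=0$ for $n\ge 2$ together with this system will determine the coefficients $\eta_\ve(\tau_n)$ iteratively.

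I would next identify $\SP{\tau_n}_1$ combinatorially. Since an ordered subtree of the bushy tree $\tau_n$ is empty or consists of the root together with an arbitrary subset of the $n-1$ leaves, its complement is a forest of isolated leaves and is therefore a single tree precisely when the subtree contains the root and exactly $n-2$ leaves. Hence $\SP{\tau_n}_1$ has $n-1$ elements, for each of which $s_{\tau_n}=\tau_{n-1}$ and $\tau_n\setminus s=\bullet$. Writing $u_n(t)\coloneq\gamma(t)(\tau_n)$, the ODE therefore collapses on bushy trees to the closed recursion
\begin{equation*}
u_n'(t)=\eta_\ve(\tau_n)+(n-1)\,\eta_\ve(\bullet)\,u_{n-1}(t), \qquad u_n(0)=0,
\end{equation*}
and the case $n=1$ (where $\SP{\bullet}_1=\emptyset$) immediately yields $\eta_\ve(\bullet)=\ve$.

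Finally, I would repackage this triangular recursion into the formal exponential generating functions
\begin{equation*}
V(t,x)\coloneq\sum_{n\ge 1}\frac{u_n(t)}{(n-1)!}\,x^n, \qquad E(x)\coloneq\sum_{n\ge 1}\frac{\eta_\ve(\tau_n)}{(n-1)!}\,x^n.
\end{equation*}
Multiplying the recursion by $x^n/(n-1)!$ and summing turns it into the linear PDE $\partial_t V=E(x)+\ve x\,V$ with $V(0,x)=0$, whose solution is $V(t,x)=E(x)(e^{\ve x t}-1)/(\ve x)$. The boundary data $V(1,x)=\ve x$ (read off from $\gamma(1)=a_\ve$) then force
\begin{equation*}
E(x)=\frac{\ve^2 x^2}{e^{\ve x}-1}=\ve x\cdot\frac{\ve x}{e^{\ve x}-1},
\end{equation*}
and comparing coefficients against the classical defining series $\frac{z}{e^z-1}=\sum_{k\ge 0}B_k\,z^k/k!$ gives $\eta_\ve(\tau_n)=\ve^n B_{n-1}$. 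The one genuinely non-formal step is the combinatorial determination of $\SP{\tau_n}_1$; the generating-function manipulation is legitimate because only finitely many terms of $E$ and $V$ contribute to the coefficient of $x^n$ for each fixed $n$, so no convergence issue arises.
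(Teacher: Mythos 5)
Your proof is correct, but it follows a genuinely different route from the paper's. The paper works with the componentwise form of the \emph{left}-translation ODE for $\gamma(t)=\exp_{\BGp}(t\eta_\ve)$ (equation \eqref{eq: nonexp:leftevol}), in which the sum runs over \emph{all} non-empty ordered subtrees of $\tau_n$; since these are exactly the $\tau_k$ with $1\le k\le n$ (with multiplicity $\binom{n-1}{k-1}$) and the complementary forests consist of one-node trees, integrating over $[0,1]$ produces in one stroke the linear system $\sum_{k=1}^{n}\binom{n}{k-1}\eta_\ve(\tau_k)\ve^{-k}=0$, which is literally the classical recursion $\sum_{k=0}^{n-1}\binom{n}{k}B_k=0$ for the Bernoulli numbers. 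You instead use the regularity ODE \eqref{eq: diffeq:comp}, whose sum runs only over non-trivial splittings; your combinatorial identification of $\SP{\tau_n}_1$ (the $n-1$ subtrees consisting of the root and $n-2$ leaves, each with $s_{\tau_n}=\tau_{n-1}$ and $\tau_n\setminus s=\onenode$) is correct and consistent with $\abs{\SP{\tau_n}_1}=n-1$ from the appendix, and it yields the much sparser two-term recursion $u_n'=\eta_\ve(\tau_n)+(n-1)\ve u_{n-1}$. The price is that the Bernoulli numbers are no longer visible as a recursion and must be extracted via the exponential generating function $z/(e^z-1)$, which your formal-power-series computation does correctly (and the triangularity remark disposes of convergence). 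What each approach buys: the paper's left-ODE version needs more combinatorics up front (counting all ordered subtrees of a bushy tree) but only the elementary Bernoulli recursion at the end; yours minimizes the combinatorics and instead identifies the generating function of the $\eta_\ve(\tau_n)$ in closed form, which is arguably more illuminating. The only point you should make explicit is why the one-parameter group $t\mapsto\exp_{\BGp}(t\eta_\ve)$ satisfies the \emph{right}-invariant equation \eqref{eq: diffeq:comp} with constant integrand $\eta_\ve$: for a one-parameter subgroup $\gamma(t+h)=\gamma(h)\cdot\gamma(t)=\gamma(t)\cdot\gamma(h)$, so the left- and right-logarithmic derivatives coincide and both ODEs hold with the same generator; this is standard but deserves a sentence, since the paper's own equation \eqref{eq: Lie:expo} is stated in the left-translation form.
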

\begin{proof}
Fix $\ve \in \R \setminus \{0\}$ and let $\eta_\ve = \log_{\BGp}(a_\ve)$.
To shorten the notation we denote by $\gamma \colon [0,1] \rightarrow \BGp$ the smooth curve which solves \eqref{eq: Lie:expo} (in the Lie group $\BGp$).
By \cite[Section 4]{BS14} and the above remarks,
$a_\ve = \gamma(1),$
and $\gamma$ solves the differential equation
\begin{equation}
\label{eq: nonexp:leftevol}
\begin{cases}
 \gamma' (t) (\tau) &= \displaystyle \sum_{s \in \OST(\tau)_1}\eta_\ve(s_{\tau}) \prod_{\theta \in s \setminus \tau}\gamma(t)(\theta)  \\
 \gamma(0)(\tau) &= e(\tau)
 \end{cases}
\end{equation}
where we denote by $\OST(\tau)_1$ the non-empty ordered subtrees of $\tau$.

If $\tau = \onenode = \tau_1$, equation \eqref{eq: nonexp:leftevol} simplifies to
\begin{displaymath}
\gamma' (t) (\onenode) = \eta_\ve(\onenode).
\end{displaymath}
Combining this with $\gamma(1)(\onenode)= a_{\ve}(\onenode)=\ve$, we get
\begin{equation}
\begin{gathered}\label{eq: nonexp:onenode}
\eta_\ve(\onenode)=\ve, \qquad \gamma(t)(\ve)=\ve t.
\end{gathered}
\end{equation}

In the rest of the proof, we set  $\tau =\tau_n$ in \eqref{eq: nonexp:leftevol} for $n>1$. the non-empty ordered subtrees $s_{\tau_n}$ are of the form $\tau_{k}$ for $1\le k\le n$, and $\tau_n \setminus s$ is the forest consisting of $n-k$ one-node trees.
Using this observation, we collect recurring terms in \eqref{eq: nonexp:leftevol} to get
\begin{equation}
\label{eq: nonexp:ode}
\begin{aligned}
 \gamma' (t) (\tau_n) &= \sum_{k=1}^{n} \binom{n-1}{k-1} \eta_\ve(\tau_k)\left(\gamma(t)(\onenode)\right)^{n-k} \\
                      &=  \sum_{k=1}^{n} \binom{n-1}{k-1} \eta_\ve(\tau_k)(\ve t)^{n-k}.
                      \end{aligned}
\end{equation}
Integrating over $t$ and using $a_\ve=\gamma(1)$, we get the following equality
\begin{equation}
\label{eq: nonexp:eqsys}
\begin{aligned}
0 = a_{\ve}(\tau_n) = \gamma(1)(\tau_n) &= \sum_{k=1}^{n} \frac{1}{n-k+1}\binom{n-1}{k-1} \eta_\ve(\tau_k)\ve^{n-k}, \\
&= \frac{\ve^n}{n} \sum_{k=1}^{n} \binom{n}{k-1} \eta_\ve(\tau_k)\ve^{ -k},
\end{aligned}
\end{equation}
for all $n\ge 2$.
In the last equality of \eqref{eq: nonexp:eqsys}, we have used $\frac{1}{n-l}\binom{n-1}{l}=\frac{1}{n}\binom{n}{l}$ and pulled $\ve^n/n$ out of the sum.
Now the linear equations \eqref{eq: nonexp:eqsys} (for $n \in \N$) can be recursively solved for $\eta_\ve(\tau_k)$, $k\ge 2$, when $\eta_\ve(\tau_1)$ is known.

It is well known (see \cite[Chapter 15 Lemma 1]{MR1070716}) that the Bernoulli numbers $B_k, k \in \N$ satisfy
\begin{displaymath}
\sum_{k=0}^{n-1} \binom{n}{k} B_k =0.
\end{displaymath}
Therefore, the general solution to \eqref{eq: nonexp:eqsys} is $\eta_{\ve}(\tau_n)=C B_{n-1}\ve^n,$ for $n=1,2,\dotsc$ From \eqref{eq: nonexp:onenode}, the constant $C$ is seen to be 1.
\end{proof}

\begin{prop}
 Neither the complex tame Butcher group $\tBGC$, nor the tame Butcher group $\tBG$ is (locally) exponential.
\end{prop}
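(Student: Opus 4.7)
The plan is to reduce to the real case and then exhibit, for each nonzero $\ve$, an element $a_\ve \in \tBG$ arbitrarily close to the identity which cannot lie in the image of $\exp_{\tBG}$.

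By the remark opening this section, it suffices to treat the real tame Butcher group. For candidates I would take precisely the elements $a_\ve$ used in Proposition~\ref{prop: nonexpcoeff}. Remark~\ref{rem: candidates} already identifies the unique possible preimage: since $\BGp$ is exponential with inverse $\log_{\BGp}$, naturality of the Lie group exponential forces the only element of $\Lf(\BGp)$ that could map to $a_\ve$ under $\exp_{\tBG}$ to be $\eta_\ve = \log_{\BGp}(a_\ve)$. For $a_\ve$ to lie in $\exp_{\tBG}(\Lf(\tBG))$, this $\eta_\ve$ would therefore have to belong to the subspace $\Lf(\tBG) \subseteq \Lf(\BGp)$, that is, it would have to satisfy the exponential growth condition.

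The heart of the argument is then to rule this out. Proposition~\ref{prop: nonexpcoeff} provides the essential formula $\eta_\ve(\tau_n) = \ve^n B_{n-1}$ on the bushy trees, where $B_k$ denotes the $k$th Bernoulli number. The classical asymptotic
\begin{displaymath}
 |B_{2n}| \sim \frac{2\,(2n)!}{(2\pi)^{2n}} \qquad \text{as } n \to \infty
\end{displaymath}
shows that the Bernoulli numbers grow faster than any exponential in their index. Consequently, for every $\ve \neq 0$ and every $C, K > 0$, the inequality $|\eta_\ve(\tau_n)| = |\ve|^n |B_{n-1}| > C K^{|\tau_n|}$ holds for all sufficiently large $n$, since $|\tau_n|=n$. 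Thus $\eta_\ve$ violates the exponential growth condition defining $\tBG$, so $\eta_\ve \notin \Lf(\tBG)$, and combined with the previous paragraph this forces $a_\ve \notin \exp_{\tBG}(\Lf(\tBG))$.

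To upgrade this to failure of local exponentiality I would verify that the family $(a_\ve)_{\ve \neq 0}$ accumulates at the identity $e$ in the Silva topology of $\tBG$. Since $a_\ve - e$ is supported on the single tree $\onenode$, its norm in every Banach step $\R^{\RT_0}(\omega_k)$ equals $|\ve|/2^k$, which vanishes with $\ve$. In particular $a_\ve \to e$ already in $\R^{\RT_0}(\omega_1)$ and hence in the inductive limit, so every neighbourhood of $e$ in $\tBG$ contains some $a_\ve$ outside the image of $\exp_{\tBG}$; the complex case follows from the opening reduction. The only quantitative obstacle is the growth estimate on $\eta_\ve$, but the heavy combinatorial lifting linking $\eta_\ve$ to Bernoulli numbers has already been handled in Proposition~\ref{prop: nonexpcoeff}, so the remaining step is just the well-known factorial growth of $B_n$.
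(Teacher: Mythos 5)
Your proposal is correct and follows essentially the same route as the paper: it uses the elements $a_\ve$ from Proposition~\ref{prop: nonexpcoeff}, the uniqueness of the candidate logarithm $\log_{\BGp}(a_\ve)$ from Remark~\ref{rem: candidates}, and the factorial growth of the Bernoulli numbers to rule out exponential boundedness, plus the (correct) observation that $a_\ve \to e$ in every Banach step and hence in the Silva topology. The only minor imprecision is the claim that $|\eta_\ve(\tau_n)| > CK^{|\tau_n|}$ holds for \emph{all} sufficiently large $n$ --- since $B_{2k+1}=0$ for $k\geq 1$, this holds only for infinitely many $n$ (those with $n-1$ even) --- but that already suffices to violate the exponential growth condition, so the argument is unaffected.
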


\begin{proof}
 We have to prove that the Lie group exponential map is not a (local) diffeomorphism near the identity element.
 To see this, denote by $a_\ve$ the elements of $\tBG$ introduced in Proposition \ref{prop: nonexpcoeff} and consider the curve
  \begin{displaymath}
   a \colon \R \rightarrow \tBG , \quad \ve \mapsto \begin{cases}
                                                   e &\text{if } \ve =0,\\
                                                   a_\ve &\text{else}.
                                                  \end{cases}
  \end{displaymath}
 Clearly $a$ is a continuous curve through the identity element $e$ of $\tBG$.
 We will now prove that the only point on the curve which is contained in the image of $\exp_{\tBG}$ is the identity.
 From this it follows that the image of $\exp_{\tBG}$ does not contain an identity neighbourhood, whence the (complex) Butcher group is not (locally) exponential.

 Proposition \ref{prop: nonexpcoeff} asserts that for $\ve \neq 0$ the coefficients of $a(\ve) = a_\ve$ on the bushy tree $\tau_n$ are given by $\ve^nB_{n-1}$.
 Now \cite[Chapter 15 Eq. (7)]{MR1070716} shows that $|B_{2n}| > \frac{2(2n)!}{(2\pi)^{2n}}$, i.e. $B_{2n}$ grows factorially.
 It then follows from the proposition that for all $\ve\ne 0$, $\log_{\BGp}(a_{\ve})$ is not exponentially bounded, whence it is not contained in $\Lf (\tBG)$.
 Following Remark \ref{rem: candidates} we deduce that there $a (\ve)$ is not contained in the image of $\exp_{\tBG}$.
\end{proof}

\section{Lie group morphisms to germs of diffeomorphisms}

In this section we construct Lie group morphisms from the tame Butcher group into Lie groups of germs of diffeomorphisms.
The idea is that these Lie group morphisms are directly related to the construction of numerical approximations to solutions of ordinary differential equations.
Namely, we will realise elements in the tame Butcher group as their associated  B-series and see that these yield diffeomorphisms near equilibrium points of ordinary differential equations.
The target Lie groups envisaged here are Lie groups of germs of diffeomorphisms on Banach spaces which arise as special cases of \cite[Theorem B]{MR2670675}.

Before we begin note that by definition, a B-series is constructed with respect to a pre-chosen vector field (cf.\ Definition \ref{defn: Bseries}).
Hence the Lie group morphisms constructed in the following depend on the choice of a certain vector field.
It will turn out that the construction is closely related to a Lie group morphism which sends elements in the Butcher group to formal diffeomorphisms.
Thus we will first describe the construction of the morphism from the (full) Butcher group to formal diffeomorphisms.
To this end recall the following.

\begin{setup}{{\cite[Example IV.1.14]{neeb2006}}}\label{setup: formdiff}
 Let $\K \in \{\R,\C\}$ and $F$ be a Banach space over $\K$.
 We write $\Gf (F)$ for the \emph{group of formal diffeomorphisms} of $F$ fixing $0$. Elements in the group are represented by formal power series of the form
  \begin{displaymath}
   \varphi(x) = g.x + \sum_{k \geq 2 } p_k(x)
  \end{displaymath}
 with $g \in \GL (F)$ (an invertible continuous linear Operator on $F$) and $p_k$ a homogeneous polynomial of degree $k$ on $F$.\footnote{Recall from \cite{BS71a} that a mapping $p \colon F \rightarrow F$ is called \emph{homogeneous polynomial of degree $k$} if there is a symmetric $k$-linear map $\overline{p} \colon F^k \rightarrow F$ with $p(x) = \overline{p} (x,x,\ldots,x)$.} Here the group operation is given by composition of power series.
 We call $\varphi$ \emph{pro-unipotent} if $g=\id_F$ and note that the subgroup $\Gf_1 (F)$ of all pro-unipotent formal diffeomorphism form a pro-nilpotent Lie group (which even admits a global manifold chart, cf.\ \cite[Example IV.1.13]{neeb2006})
 \begin{displaymath}
   \Gf_1 (F) = \lim_{\leftarrow} G_k \quad \text{ with } G_k \coloneq \left\{x+ \sum_{n=2}^k p_n (x)\middle| \begin{aligned}\text{for } 2 \leq i \leq n, p_i \text{ is a} \\ \text{ polynomial of degree } i \end{aligned}\right\}  .
 \end{displaymath}
 Here $G_k$ is the nilpotent Banach Lie group obtained by defining multiplication to be composition modulo terms of order greater than $k$.
 Then the group $\Gf (F)$ of all formal diffeomorphisms of $E$ fixing $0$ is the semidirect product
  \begin{displaymath}
   \Gf (F)  = \Gf_1 (F) \rtimes \GL (F)
  \end{displaymath}
 where the Banach Lie group $\GL (F)$ acts by conjugation. As this action is smooth $\Gf (F)$ is a \Frechet Lie group.
 In the special case $F = \K^n$ for some $n \in \N$ elements in the group are represented by the familiar power series expansions
 \begin{displaymath}
  \varphi (x) = g.x + \sum_{|\mathbf{m}| >1} c_{\mathbf{m}} x^{\mathbf{m}}
 \end{displaymath}
 with $\mathbf{m} = (m_1 , \ldots ,m_n) \in \N_0^m$, $|\mathbf{m}| \coloneq m_1 + m_2 + \ldots + m_n$ and $x^{\mathbf{m}} \coloneq x_1^{m_1} x_2^{m_2} \cdots x_n^{m_n}$.\footnote{Actually \cite{neeb2006} treats only the case $F = \K^n$. However, it is easy to see that all arguments used to construct the Lie group structure generalise verbatim to the case of an arbitrary Banach space $F$.}
\end{setup}

We will now construct a Lie group morphism from the Butcher group (and thus also from the tame Butcher group) into the group of formal diffeomorphisms on a Banach space.

 Let us fix now some additional data needed to construct B-series which correspond to elements in a Lie group of germs of diffeomorphisms.

 \begin{setup}\label{setup: global}
  Choose and fix once and for all the following:
  \begin{itemize}
   \item a complex Banach space $(E, \norm{\cdot})$ together with $y_0 \in E$,
   \item a holomorphic map $f \colon E \supseteq U \rightarrow E$, defined on an open $y_0$-neighbourhood $U$.
  \end{itemize}
 As in Definition \ref{defn: Bseries} we define elementary differentials and B-series depending on $f$. However, since we fixed $f$, we will suppress $f$ in the notation and write:
  \begin{displaymath}
   B (a,y,h) = y + \sum_{\tau \in \RT} \frac{h^{|\tau|}a(\tau)}{\sigma(\tau)} F(\tau)(y) \quad \text{for } (a,y,h) \in \BGC \times U \times \C .
  \end{displaymath}
 Furthermore, recall from \cite[III. Theorem 1.10]{HLW2006} that the product in $\tBGC$ corresponds to composition of B-series, i.e.\ on the level of formal series we have
 \begin{equation}\label{eq: Bseries:insert}
  B (b , B(a,h,y),h) = B (a\cdot b,y,h) \text{ for all } a,b \in \tBGC .
 \end{equation}
  \end{setup}
% Analytic mappings which vanish at a point $y_0$ have been studied in the numerical analysis literature to derive error estimates.
% We refer to \cite[Section 3.2]{HL1997} for results on error estimates near hyperbolic equilibrium points.

\begin{rem}[Warning]
The multiplication of two elements in the (tame) Butcher group corresponds (by convention) to composition of power series in opposite order (see \eqref{eq: Bseries:insert}).
This is somewhat annoying if one wants to construct Lie group morphisms into groups of diffeomorphisms whose product is given by composition of maps.
Due to the different order of composition we can thus only construct Lie group antimorphisms, i.e.\ Lie group morphisms from the (tame) Butcher group in the opposite group.
However, this problem is unavoidable since we prefer to stick to the conventions on the group products established in the literature.
\end{rem}

\begin{rem}
 \begin{enumerate}
 \item In our B-series, we allow the time step $h$ to be a complex parameter.
 This usually has no direct interpretation in numerical analysis (where $h$ is chosen in $\R \setminus \{0\}$), but is required for technical reasons.
 We will later be able to restrict to $h \in \R$.
 \item Assuming that the map $f$ from \ref{setup: global} is holomorphic enables us to employ the powerful estimate for B-series from Proposition \ref{prop: convergence}.
 This estimate will be an indispensable tool in the following constructions.
 \end{enumerate}
\end{rem}

\begin{prop}\label{prop: formalmor}
 Let $E$ be a complex Banach space and $y_0 \in E$ fixed.
 Consider a holomorphic map $f \colon E \supseteq U \rightarrow E$ on a $y_0$-neighbourhood $U$.
 Then
 \begin{enumerate}
  \item The assignment
 \begin{displaymath}
   \formalmor_f \colon \BGC \rightarrow \Gf (E \times \C),\quad a \mapsto \left[(x,h) \mapsto (B (a,x+y_0,h)-y_0,h)) \right]
  \end{displaymath}
 is a complex analytic antimorphism of Lie groups (i.e.\ it is a Lie group morphism to the opposite group)
 \end{enumerate}
 Let $g \colon F \supseteq W \rightarrow F$ be a real analytic map such that $E$ is a complexification of $F$, $y_0 \in W$ and $f \colon E \supseteq U \rightarrow E$ its holomorphic extension, i.e.\ $f|_{U \cap F} = g$.
 \begin{enumerate}
  \item[{\upshape (b)}] The morphism $\formalmor_f$ restricts to a real analytic antimorphism of Lie groups
  \begin{displaymath}
  \formalmor_{g} \colon \BGp \rightarrow \Gf (F \times \R) ,\quad a \mapsto \formalmor_f (a)|_{F \times \R}.
 \end{displaymath}
 \end{enumerate}
\end{prop}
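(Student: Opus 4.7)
The plan is to verify in turn the three defining properties for part (a) — that each $\formalmor_f(a)$ is genuinely an element of $\Gf(E \times \C)$, that $\formalmor_f$ reverses multiplication, and that it is complex analytic — and then to deduce (b) by restriction to the real form.

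\textbf{Well-definedness and anti-multiplicativity.} First I would expand $B(a, x+y_0, h) - y_0$ as a formal power series in $(x,h)$ around the origin. Its value at $(0,0)$ vanishes, and only the one-node tree contributes a monomial of total degree one in $(x,h)$, namely $h\,a(\onenode)f(y_0)$, since every tree $\tau$ with $|\tau| \geq 2$ contributes monomials of total degree at least $|\tau| \geq 2$. Hence the linear part of $\formalmor_f(a)$ is the continuous linear automorphism $(x,h) \mapsto (x + h\,a(\onenode) f(y_0),\, h)$ of $E \times \C$, placing $\formalmor_f(a)$ in $\Gf(E \times \C)$. Anti-multiplicativity is then a direct computation invoking the composition identity \eqref{eq: Bseries:insert}: both $\formalmor_f(a\cdot b)(x,h)$ and $(\formalmor_f(b) \circ \formalmor_f(a))(x,h)$ reduce to $(B(b, B(a, x+y_0, h), h) - y_0,\, h)$, with the $y_0$-shift cancelling when the outer map is applied.

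\textbf{Complex analyticity.} Using the decomposition $\Gf(E \times \C) = \Gf_1(E \times \C) \rtimes \GL(E \times \C)$ from \ref{setup: formdiff}, analyticity into $\Gf(E\times\C)$ can be checked factor by factor. The $\GL$-factor is the linear part, which depends linearly on the coordinate $a(\onenode)$ and is therefore analytic. For the $\Gf_1$-factor, realised as the projective limit $\varprojlim G_k$ of Banach Lie groups, the universal property reduces analyticity to analyticity of each composition $\BGC \to \Gf_1 \to G_k$. But the image in $G_k$ collects exactly the terms of total degree $\leq k$ in the formal series $B(a, x+y_0, h) - y_0$, and inspection shows it depends polynomially on the finite set $\{a(\tau) : |\tau| \leq k\}$. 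Since every evaluation functional $\ev_\tau$ is continuous on $\BGC$ (Remark \ref{rem: pwtop}), this composition is a polynomial map of locally convex spaces and hence complex analytic.

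\textbf{Part (b) and the main obstacle.} For (b), when $g$ is real analytic with holomorphic extension $f$, each elementary differential $F_f(\tau)$ restricts to $F_g(\tau)$ on $W$, so for $a \in \BGp$ and $(x,h) \in F \times \R$ the formal series $B(a, x+y_0, h) - y_0$ has real coefficients and $\formalmor_f(a)$ restricts to an element of $\Gf(F \times \R)$. The antimorphism property is inherited from (a), and real analyticity follows from the complex analyticity of $\formalmor_f$ together with the fact that $\BGp$ is a real form of $\BGC$. The main technical point is the analyticity verification in (a): one has to nail down the Fréchet Lie group topology on $\Gf(E \times \C)$ precisely enough to reduce analyticity to polynomial dependence of each Taylor coefficient on the tree values, after which everything else amounts to bookkeeping around the composition identity \eqref{eq: Bseries:insert}.
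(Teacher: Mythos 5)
Your proposal is correct and follows essentially the same route as the paper's proof: the same identification of the linear part via the one-node tree, the same use of \eqref{eq: Bseries:insert} for anti-multiplicativity, the same reduction of analyticity through the decomposition $\Gf_1(E\times\C)\rtimes\GL(E\times\C)$ and the projective limit $\varprojlim G_k$ (where the paper notes, as you do, that each $\pi_k$-component depends only on the finitely many continuous evaluations $\ev_\tau$ with $|\tau|\le k$), and the same complexification argument for part (b). No gaps.
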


\begin{proof} \begin{enumerate}
               \item Since $f$ is  holomorphic the elementary differential $F(\tau)$ is complex analytic in $y$ for each $\tau \in \RT$.
 Develop each $F(\tau)$ into its Taylor series around $y_0$, i.e.\ $F(\tau)(y) = \sum_{k=1}^\infty p^\tau_k (y-y_0)$ where $ p^\tau_k$ is a homogeneous polynomial of degree $k$.
 Further, for the one node tree $\bullet$ we have $F(\bullet)(y) = f(y)$ and thus the term of degree $0$ in the Taylor expansion of $F(\bullet)$ is $p^\bullet_0 (y) = f(y_0)$.

 We conclude that $B (a,y,h)-y_0$ is a formal power series in the variables $x\coloneq y-y_0$ and $h$ which maps $(x,h)=(0,0)$ to $(0,0)$ in $E\times \C$.
 Now $\sigma(\bullet)=1$, whence the first order term of $B (a,y,h)-y_0$ is
  \begin{displaymath}
   \left(x + h a(\bullet) f(y_0) ,h\right) = \begin{pmatrix}
                                                                 \id_E & a(\bullet) f(y_0) \\0 &1
                                                                \end{pmatrix}. (x,h).
  \end{displaymath}
 In particular, we see that $(\formalmor_f (a,x+y_0,h)-y_0,h)$ is contained in $\Gf (E\times \C)$.
 Now the product in $\Gf(E\times \C)$ is composition of formal power series, i.e.\ the first component of $\formalmor_f (a) \cdot \formalmor_f (b)$ is just $B(a,B(b,x+y_0,h),h)-y_0$.
 Thus \eqref{eq: Bseries:insert} implies that $\formalmor_f (a) \cdot \formalmor_f (b) = \formalmor_f (b\cdot a)$ holds and $\formalmor_f$ is a group anti-morphism.

 We will now prove that $B_f$ is complex analytic, i.e.\ $\formalmor_f$ is an analytic anti-morphism of Lie groups.
 To this end we use the identification $\Gf (E\times \C) \cong \Gf_1 (E \times \C) \rtimes \GL (E\times \C)$ to write $(\formalmor_f (a,x+y_0,h)-y_0,h)$ as
 \begin{displaymath}
  \left(\underbrace{(x, h) + \left(\sum_{\tau \in \RT, |\tau|>1} \frac{a(\tau)h^{|\tau|}}{\sigma(\tau)}\sum_{k=0}^\infty p_k^\tau (x),0\right)}_{S_a (x,h)\coloneq}, \underbrace{\begin{pmatrix}
                                                                 \id_E & a(\bullet) f(y_0) \\0 &1
                                                                \end{pmatrix}}_{M_a\coloneq  }\right)
 \end{displaymath}
 This decomposes $\Gf (E\times \C)$ as the product $\Gf_1 (E \times \C) \rtimes \GL (E\times \C)$.
 We deduce that it suffices to prove that the maps $S_f \colon \BGC \rightarrow \Gf_1 (E\times \C), a \mapsto S_a$ and $M_f \colon \BGC \rightarrow \GL (E\times \C), a \mapsto M_a$ are complex analytic.
 However, since $\Gf_1 (E\times \C)$ is a projective limit of the Banach Lie groups $G_k$ (cf.\ \ref{setup: formdiff}), $S_a$ will be complex analytic if $\pi_k \circ \formalmor_f$ is complex analytic for each $k\in \N$ where
 \begin{displaymath}
  \pi_k \colon \Gf_1 (E\times \K) \rightarrow G_k,\hspace{8pt}  \left((x,h) + \sum_{n>1} q_n(x,h)\right) \mapsto \left((x,h) + \sum_{n>1}^k q_n(x,h)\right).
 \end{displaymath}
 This follows from the fact that $\Gf_1 (E\times \C)$ admits a global chart and its Lie algebra $\Lf (\Gf_1 (E\times \C)$ is the projective limit of the Lie algebras $\Lf (G_k)$.
 Now
 \begin{displaymath}
  \pi_k \circ \formalmor_f (a)(x,h) = \left(x + \sum_{n=1}^k\sum_{|\tau| + r = n} \frac{a(\tau)h^{|\tau|}}{\sigma(\tau)} p_r^\tau (x),h\right)
 \end{displaymath}
 is complex analytic for each $k \in \N$ as the evaluation maps $\ev_\tau \colon \BGC \rightarrow \C , a \mapsto a (\tau), \tau \in \RT$ are complex analytic (cf.\ \cite[Lemma 1.22]{BS14} or Corollary \ref{cor: ev:cont}).

 To see that also $M_f$ is complex analytic, we use that $\ev_\bullet$ is complex analytic.
 Then $M_f$ is complex analytic as one can write it as the composition of $\ev_\bullet$ with the complex analytic map $\C \rightarrow \GL(E\times \C) ,\ z \mapsto \begin{pmatrix}
                                                                               \id_E & z \\
                                                                               0 & 1
                                                                              \end{pmatrix}
 $.
 \item By part (a) the map $\formalmor_f$ is an antimorphism of complex Lie groups.
  Recall that $\BGC$ is the complexification (in the sense of Definition \ref{defn: complexification}) of the real analytic Lie group $\BGp$ and $\Gf (E\times \C)$ is a complexification of the real analytic Lie group $\Gf (F \times \R)$.
  Clearly $\formalmor_f$ restricts on $\BGp$ to $\formalmor_g$, whence $\formalmor_f$ is a complex analytic extension of $\formalmor_g$ and $\formalmor_g$ is a real analytic antimorphism of Lie groups.\qedhere
  \end{enumerate}
\end{proof}

The morphisms constructed in Proposition \ref{prop: formalmor} translates the mechanism which associates to an element of the Butcher group its B-series (i.e.\ a formal diffeomorphism) into the language of Lie theory.
We will now consider an analogous construction for the tame Butcher group.
The key idea here is however, that the morphism should not only yield a formal diffeomorphism but should incorporate information on convergence of the $B$-series.
For this reason this new construction will be more involved.
Before we discuss this in detail, we need to recall the definition of the Lie groups of germs of diffeomorphisms.
This group will replace the group of formal diffeomorphisms and serve as our new target Lie group.

\begin{setup}
 Let $E$ be a Banach space over $\K \in \{\R,\C\}$ and $y_0 \in E$.
 Consider the \emph{group of germs of analytic diffeomorphisms}
  \begin{displaymath}
   \DiffGerm (\{y_0\} , E) \coloneq \left\{\eta \middle| \substack{ \eta \text{ is a } C^\omega_\K\text{-diffeomorphism between} \\ \text{ open neighbourhoods of } \{y_0\} \text{ and } \eta (y_0)=y_0} \right\} /\sim,
  \end{displaymath}
 where $\eta_1 \sim \eta_2$ if they coincide on a common $y_0$-neighbourhood.
 Following \cite[Section 3]{MR2670675}, we model the group $\DiffGerm (\{y_0\}, E)$ on the space
 \begin{displaymath}
  \Germ (\{y_0\}, E)_{\{y_0\}} \coloneq  \left\{\gamma \middle|  \substack{ \gamma \text{ is a } C^\omega_\K\text{-map defined on an }\\ \text{ open } y_0-\text{neighbourhood} \text{ and } \gamma (y_0) =0 }\right\} /\sim .
  \end{displaymath}
 To construct the locally convex structure on the model space, define for each $n \in \N$ the Banach space
 \begin{displaymath}
   \BHol (B_{\frac{1}{n}}^E (y_0) , E)_{\{y_0\}} \coloneq \{ f \in \Hol (B_{\frac{1}{n}}^E (y_0) ,E) \mid f \text{ is bounded and } f(y_0)=0 \},
  \end{displaymath}
 whose norm is given by the supremum norm.

 Now $\Germ (\{y_0\},E)_{\{y_0\}} = \bigcup_{n \in \N}  \BHol (B_{\frac{1}{n}}^E (0) , E)_{\{y_0\}}$ is an (LB)-space, i.e.\ the locally convex inductive limit of Banach spaces.
 In fact $\Germ (\{y_0\},E)_{\{y_0\}}$ is again a Hausdorff space by \cite[Proposition 3.3.]{MR2670675}.

 Finally, the group  $\DiffGerm (\{y_0\} , E)$ admits a global chart defined via
 \begin{align*}
 \Phi \colon \DiffGerm (\{y_0\} , E)  &\rightarrow \Germ (\{y_0\},E)_{\{y_0\}} ,\\  [\eta]_\sim &\mapsto [\eta]_\sim - [\id_{E}]_\sim .
 \end{align*}
 This chart turns $\DiffGerm (\{y_0\} , E)$ into a Lie group with respect to the composition and inversion of germs of diffeomorphisms.\footnote{If $E$ is finite-dimensional these groups were studied by Pisanelli see \cite[p.116]{MR2670675} for references.}
 \end{setup}

 \begin{prop}\label{prop: nmaps:DGerm}
  For each $n \in \N$ we obtain a map
    \begin{align*}
     B_n \colon B_1^{\omega_n} (e) \cap \tBGC &\rightarrow \DiffGerm (\{(y_0,0)\}, E \times \C) ,\\ a &\mapsto [ (y,h) \mapsto (B(a,y,h),h)]_\sim,
    \end{align*}
 where $(y,h) \mapsto (B(a,y,h),h)$ is defined on an open neighbourhood of $(y_0,0) \in E \times \C$.\footnote{Recall that $e$ is the unit in $\tBGC$ and we denote by $B_1^{\omega_n} (e)$ the $1$-ball around $e$ in the Banach space $\C^{\RT_0} (\omega_n)$.}
 Let $\Phi$ be the global chart for $\DiffGerm (\{(y_0,0)\}, E \times \C)$, then there is $m (n) \in \N_0$ such that $\Phi \circ B_n$ factors through $\BHol (B_{\frac{1}{m(n)}}^{E\times \C} (y_0,0) , E \times \C)_{\{(y_0,0)\}}$.
\end{prop}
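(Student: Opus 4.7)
The strategy is to turn the assumption $a \in B_1^{\omega_n}(e)$ into a \emph{uniform} exponential growth bound and then apply Proposition \ref{prop: convergence} to obtain a common domain of convergence for all such $a$. Since $e(\tau) = 0$ for $\tau \in \RT$, the condition $\|a - e\|_{\omega_n} < 1$ rewrites as $|a(\tau)| \omega_n(\tau) < 1$ for every $\tau \in \RT$, i.e.\
\[
|a(\tau)| \leq (2^n)^{|\tau|} \quad \text{for all } \tau \in \RT.
\]
Hence the exponential growth constants may be taken as $C = 1$ and $K = 2^n$, independently of the particular $a \in B_1^{\omega_n}(e) \cap \tBGC$.

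With this uniform $K$, Proposition \ref{prop: convergence} provides constants $R, M > 0$ depending only on $f$ and $y_0$, and an $h_0 > 0$ depending on $f, y_0$ and $n$, such that the B-series $B(a, y, h)$ converges on $V \times \{|h| \leq h_0\}$ with $V = B_{R/2}^E(y_0)$. Moreover, the tail estimate \eqref{est: Bseries} reads
\[
\bigl\| \sum_{|\tau|=q} \tfrac{h^{|\tau|} a(\tau)}{\sigma(\tau)} F(\tau)(y) \bigr\| \leq \tfrac{1}{2} \bigl( \tfrac{4eM \cdot 2^n |h|}{R} \bigr)^q
\]
for all $a \in B_1^{\omega_n}(e) \cap \tBGC$, $y \in V$ and $|h| \leq h_0$. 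Choosing $m(n) \in \N$ so large that $B_{1/m(n)}^{E\times\C}(y_0, 0)$ is contained in the bidisk where $4eM \cdot 2^n |h|/R \leq 1/2$, the geometric series gives a uniform bound $\|B(a,y,h) - y\| \leq 1/2$ on $B_{1/m(n)}^{E\times\C}(y_0, 0)$. As the defining series converges uniformly on this set, the limit is holomorphic, and thus $(y,h) \mapsto (B(a,y,h) - y,\, 0)$ lies in $\BHol(B_{1/m(n)}^{E\times\C}(y_0,0), E\times\C)_{\{(y_0,0)\}}$.

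It remains to verify that the map $(y,h) \mapsto (B(a,y,h), h)$ actually represents a germ of analytic diffeomorphism fixing $(y_0, 0)$: indeed, $B(a, y_0, 0) = y_0$, and its Jacobian at $(y_0, 0)$ is the upper-triangular block matrix $\bigl(\begin{smallmatrix} \id_E & a(\bullet) f(y_0) \\ 0 & 1 \end{smallmatrix}\bigr)$, which is invertible, so the analytic inverse function theorem for Banach spaces yields a local biholomorphism. Consequently $B_n(a) \in \DiffGerm(\{(y_0,0)\}, E\times \C)$ is well defined, and by the previous paragraph $\Phi \circ B_n(a)$ has a representative in $\BHol(B_{1/m(n)}^{E\times\C}(y_0,0), E\times\C)_{\{(y_0,0)\}}$ for a common $m(n)$. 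The main obstacle is the uniformity across the ball: this is precisely what is gained by the fact that the Silva-norm $\|\cdot\|_{\omega_n}$ encodes the exponential growth rate $2^n$ in a way that is independent of the individual element $a$, so Proposition \ref{prop: convergence} can be applied with parameters depending only on $n$, $f$ and $y_0$.
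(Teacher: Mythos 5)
Your proposal is correct, and for the convergence and factorization part it follows the same route as the paper: rewrite $a\in B_1^{\omega_n}(e)$ as the uniform bound $\abs{a(\tau)}\leq (2^n)^{\abs{\tau}}$, feed $K=2^n$ into Proposition \ref{prop: convergence} to get a domain $V\times B_{h_0}^{\C}(0)$ and the tail estimate \eqref{est: Bseries} depending only on $n$, $f$ and $y_0$, and conclude uniform absolute convergence, boundedness and holomorphy of $(y,h)\mapsto(B(a,y,h)-y,0)$ on a ball $B_{1/m(n)}^{E\times\C}(y_0,0)$. (The paper is slightly more careful at the holomorphy step, first checking via Hartogs' theorem that each homogeneous piece $g_p^a$ is jointly holomorphic before passing to the uniform limit; you should make that intermediate step explicit.) Where you genuinely diverge is in showing that $B_n(a)$ is a germ of a \emph{diffeomorphism}: you compute the derivative at $(y_0,0)$, observe it is the invertible block-triangular operator $\left(\begin{smallmatrix}\id_E & a(\bullet)f(y_0)\\ 0 & 1\end{smallmatrix}\right)$, and invoke the analytic inverse function theorem, which is available here because $E\times\C$ is a Banach space. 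The paper instead uses the group structure: it picks $k$ with $a^{-1}\in B_1^{\omega_k}(e)\cap\tBGC$ and uses the composition rule \eqref{eq: Bseries:insert} to exhibit $B_k(a^{-1})$ as an explicit local inverse. Your route is more elementary and even avoids the hypothesis $f(y_0)=0$ that the paper's Step 1 quietly uses (at $h=0$ every term of the series vanishes anyway, so $(y_0,0)$ is fixed regardless); the paper's route buys the extra information that the inverse germ is again a B-series of a tame element, which is exactly what is reused in Theorem \ref{thm: LGP:Mor} to prove that $B_f$ is a group antimorphism. Both are valid proofs of the Proposition as stated.
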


\begin{proof}
 Fix $n\in \N$ and consider the map $B_n$.
 Every $a \in B_1^{\omega_n} (e) \cap \tBGC$ satisfies $|a(\tau)| \leq \omega_n (\tau) = 2^{n|\tau|}$.
 Now Proposition \ref{prop: convergence} asserts that there are an open $y_0$-neighbourhood $V \subseteq U$ and a constant $h_0>0$, which depends only on $V$, the constant $K =2^n$ and $f$ such that the B-series converges for all $(a,y,h) \in \tBGC \times V \times B_{h_0}^\C (0)$, i.e.\
  \begin{equation}\label{repeat: Bseries}
   B_n (a) (y,h) = (B(a,y,h),h) = \left(y + \sum_{\tau \in \RT} \frac{h^{|\tau|}a(\tau)}{\sigma(\tau)} F(\tau)(y),h\right) \in E \times \C.
  \end{equation}
  In particular, we choose $m \coloneq m(n)  \in \N$ so large that $B_{\frac{1}{m}}^{E \times \C} (0,y_0) \subseteq  V \times B_{h_0}^\C (0)$.
  Restricting $B_n (a)$ to $B_{\frac{1}{k}}^{E \times \C} (y_0,0)$, we obtain a bounded map (this is already true on $V \times B_{h_0}^\C (0)$  by the estimate \eqref{est: Bseries}).

  We now proceed in several steps:\medskip

  \textbf{Step 1:} \emph{The B-series fixes $(y_0,0)$}.

 Since $f$ vanishes in $y_0$, we see that the elementary differentials $F(\tau)(y_0)$ vanish.
 This follow from Definition \ref{defn: Bseries} as the elementary differential  $F(\tau)(y_0)$  evaluates terms of nested \Frechet derivatives (which are multilinear) and the recursion formula eventually evaluates an argument of the form $f(y_0)=0$.
 Hence \eqref{repeat: Bseries} shows that $B_n (a) (y_0,0) = (y_0 , 0)$, whence the maps in the image of $B_n$ fix $(y_0,0)$.
 We deduce that
  \begin{equation}\label{eq: nummer}
   g (a) \coloneq B_n (a) (\cdot)|_{B_{\frac{1}{m}}^{E \times \C} (y_0,0)} - \id_{B_{\frac{1}{m}}^{E \times \C} (y_0,0)},
  \end{equation}
 takes $(y_0,0)$ to $(0,0)$.
 Hence the assertion of the proposition will follow if we can show that $B_n (a)$ is holomorphic and induces a diffeomorphism on a neighbourhood of $(y_0,0)$.
 \medskip

 \textbf{Step 2:} \emph{The map $B_n (a)$ is a holomorphic map on $B_{\frac{1}{m}}^{E \times \C} (y_0,0)$}.

 Define for $p \in \N$ the mapping
 \begin{displaymath}
 g_p^a (h,y) \coloneq \sum_{\substack{\tau \in \RT \\ |\tau| = p}}  \frac{h^{|\tau|}a(\tau)}{\sigma(\tau)} F(\tau)(y) \text{ for }(h,y) \in
  B_{\frac{1}{m}}^{E \times \C} (y_0,0).
  \end{displaymath}
 As each $g_p^a$ is a finite sum of a product of continuous maps, $g_p^a$ is continuous.
 Further, $f$ is holomorphic in $y$ and $F(\tau)$ is a finite composition of \Frechet derivatives of $f$, $F(\tau)$ is holomorphic in $y$ for each $\tau \in \RT$.
 Thus $g_p^a(h,\cdot)$ is holomorphic for each $h$. Moreover, fixing $y$, the map $g_p^a (\cdot, y)$ is clearly holomorphic for each $y$.
 Since mappings which are continuous and separately holomorphic are (jointly) holomorphic by a version of Hartog's theorem \cite[Proposition 8.10]{MR842435} we deduce that $g_p^a$ is a holomorphic map for each $p\in \N$.\footnote{Note that \cite{MR842435} defines holomorphic mappings as maps which are locally the uniform limit of a series of polynomials. However, by virtue of Remark \ref{rem: analytic} this concept coincides with our concept of holomorphic mappings on Banach spaces. (cf.\ also the extensive discussion in \cite{BS71b}).}

 We can now write $g(a) = \left(\sum_{p \in \N} g_p^a (y,h) , 0\right)$.
 Albeit we assumed in Proposition \ref{prop: convergence} that the parameter $h$ is a real number, the results formulated there carry over verbatim to the complex case.
 Thus we can invoke \eqref{est: Bseries} for complex $h$ with $\abs{h} \leq h_0$ to obtain
 \begin{equation}\label{eq: cp}
 \sup_{(y,h) \in B_{\frac{1}{m}}^{E \times \C} (0,y_0)}\norm{g_p (y,h)} \leq \frac 12 C \left(\frac{4eMK\abs{h}}{R}\right)^p \leq \frac 12 C \left(\frac{4eMKh_0}{R}\right)^p \equalscolon c_p.
 \end{equation}
 Now by construction of $h_0$ the series $\sum_{p \in \N} c_p$ converges.
 Thus \cite[II. \S 2 Theorem 2.1]{MR1659317} implies that the series of functions $\sum_{p \in \N} g_p^a$ converges uniformly and absolutely to $g (a)$.
 Hence $g (a)$ is holomorphic as a uniform limit of holomorphic maps (see \cite[Exercise 8.A]{MR842435}).
 We conclude from \eqref{eq: nummer} that $B_n (a)$ is holomorphic on the disc $B_{\frac{1}{m}}^{E \times \C} (y_0,0)$.\medskip

\textbf{Step 3:} \emph{$B_n (a)$ induces a $C^\omega$-diffeomorphism on some open $(y_0,0)$ neighbourhood}.

 As $\tBGC = \bigcup_{n \in \N} B_1^{\omega_n} (e) \cap \tBGC$, for each $a \in B_1^{\omega_n} (e) \cap \tBGC$ there is some $k \in \N$ with $a^{-1} \in B_1^{\omega_k} (e) \cap \tBGC$.
 Hence by the arguments above, we obtain a holomorphic map $B_k (a^{-1})$ which is defined on some open $(y_0,0)$-neighbourhood and takes $(y_0,0)$ to itself.
 Without loss of generality we may assume that $k \geq n$. Hence the definition of the maps $B_n$ and $B_k$ shows that $B_n (a) = B_k (a)$ (as germs of holomorphic mappings).
 Composing $B_k (a^{-1})$ and $B_n (a)$ (which is possible on some open $(y_0,0)$-neighbourhood!), \eqref{eq: Bseries:insert} shows $B_k (a^{-1}) \circ B_n (a) = B_k (e) = \id$.
 Hence $B_n (a)$ induces a $C^\omega$-diffeomorphism of suitable open $(y_0,0)$-neighbourhoods.

 In conclusion, $B_n$ makes sense as a mapping to $\DiffGerm (\{(y_0,0)\}, E \times \C)$ and $\Phi \circ B_n$ factors through $\BHol (B_{\frac{1}{m}}^{E\times \C} (y_0,0) , E \times \C)_{\{(y_0,0)\}}$.
\end{proof}

We are now in a position to prove the main result of this section. To this end we recall again that $\tBGC = \bigcup_{n \in \N}  B_1^{\omega_n} (e) \cap \tBGC$.
On each step of the union we have constructed a map into germs of diffeomorphisms and these maps glue to a morphism of Lie groups.

\begin{thm}\label{thm: LGP:Mor}
 Let $E$ be a complex Banach space and $y_0 \in E$ fixed.
 Consider a holomorphic map $f \colon E \supseteq U \rightarrow E$ on a $y_0$-neighbourhood $U$ with $f(y_0) = 0$.
 Then
 \begin{enumerate}
  \item The assignment
 \begin{align*}
  B_f \coloneq \bigcup_{n \in \N} B_n \colon \tBGC &\rightarrow \DiffGerm (\{(y_0,0)\}, E \times \C),\\ a &\mapsto B_n (a) \text{ if } a \in B_1^{\omega_n} (e),
 \end{align*}
 is a complex analytic antimorphism of Lie groups (i.e.\ it is a Lie group morphism to the opposite group)
 \end{enumerate}
 Let $g \colon F \supseteq W \rightarrow F$ be a real analytic map such that $E$ is a complexification of $F$ and $f \colon E \supseteq U \rightarrow E$ its holomorphic extension, i.e.\ $f|_{U \cap F} = g$.
 \begin{enumerate}
  \item[\upshape (b)] The morphism $B_f$ restricts to a real analytic antimorphism of Lie groups
  \begin{displaymath}
  B_{g} \colon \tBG \rightarrow \DiffGerm (\{(y_0,0)\}, F \times \R) ,\quad a \mapsto B_f (a)|_{\dom B_f (a) \cap (F \times \R)}.
 \end{displaymath}
 \end{enumerate}
\end{thm}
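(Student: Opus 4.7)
The plan is to verify in turn that $B_f$ is well-defined, that it is a group antimorphism, that it is complex analytic, and finally to obtain (b) by a complexification argument, exactly as for the analogous Proposition \ref{prop: formalmor}.

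First I would establish well-definedness of the union. If $a\in B_1^{\omega_n}(e)\cap B_1^{\omega_m}(e)\cap\tBGC$ with $n\le m$, then the formal series defining $B_n(a)$ and $B_m(a)$ coincide, and by Proposition \ref{prop: nmaps:DGerm} both represent holomorphic diffeomorphisms on some $(y_0,0)$-neighbourhood. Hence they agree as germs in $\DiffGerm(\{(y_0,0)\},E\times\C)$. Together with $\tBGC=\bigcup_{n\in\N}B_1^{\omega_n}(e)\cap\tBGC$ this makes $B_f$ a well-defined set map.

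Next I would verify the antimorphism property. For $a,b\in\tBGC$ pick $n$ large enough so that $a,b,a\cdot b\in B_1^{\omega_n}(e)\cap\tBGC$ (possible by \eqref{eq: prod:est} after enlarging $n$). On a sufficiently small $(y_0,0)$-neighbourhood all three B-series converge, and the composition identity \eqref{eq: Bseries:insert} gives
\begin{displaymath}
  B_f(b)\circ B_f(a)\,(y,h)=(B(b,B(a,y,h),h),h)=(B(a\cdot b,y,h),h)=B_f(a\cdot b)(y,h),
\end{displaymath}
so on the level of germs $B_f(a\cdot b)=B_f(b)\circ B_f(a)$, i.e. $B_f$ is a group antimorphism.

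The analyticity statement is the main technical point. Since $\tBGC$ is (locally) a Silva space and all involved charts are affine linear, $B_f$ is complex analytic if and only if each restriction $B_f|_{B_1^{\omega_n}(e)\cap\tBGC}$ is complex analytic. By Proposition \ref{prop: nmaps:DGerm} the composition $\Phi\circ B_f$ on this piece factors through the Banach space $\BHol(B_{1/m(n)}^{E\times\C}(y_0,0),E\times\C)_{\{(y_0,0)\}}$, so it suffices to check that the resulting map into this Banach space is analytic. Decompose it componentwise as $a\mapsto\sum_{\tau\in\RT}T_\tau(a)$ where
\begin{displaymath}
 T_\tau(a)(y,h)=\tfrac{h^{|\tau|}a(\tau)}{\sigma(\tau)}\bigl(F(\tau)(y),0\bigr).
\end{displaymath}
Each $T_\tau$ is the composition of the continuous linear evaluation $\ev_\tau$ (Corollary \ref{cor: ev:cont}) with a fixed element of the target Banach space, so $T_\tau$ is continuous linear and in particular complex analytic. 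The estimate \eqref{est: Bseries} from Proposition \ref{prop: convergence} (which extends verbatim to complex $h$, as noted in Step 2 of the proof of Proposition \ref{prop: nmaps:DGerm}) bounds $\|T_\tau(a)\|_\infty$ by the geometric tail $c_p$ of \eqref{eq: cp}, uniformly in $a\in B_1^{\omega_n}(e)\cap\tBGC$. Hence the series $\sum_\tau T_\tau$ converges absolutely in the operator norm, and a standard argument (uniform limits of partial sums of continuous homogeneous polynomials converge to a complex analytic map, cf.\ the criterion in Lemma \ref{lem:folklore} applied to Banach-valued functions) shows that the map $a\mapsto\Phi\circ B_f(a)$ is complex analytic on $B_1^{\omega_n}(e)\cap\tBGC$.

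For part (b), I would copy the complexification argument from Proposition \ref{prop: formalmor}(b). By Corollary \ref{cor: LGP:compl} the real Lie group $\tBG$ is a real form of $\tBGC$, and $\DiffGerm(\{(y_0,0)\},E\times\C)$ is a complexification of $\DiffGerm(\{(y_0,0)\},F\times\R)$ since $f$ is the holomorphic extension of $g$. Because $f|_{U\cap F}=g$ and the real/imaginary splitting of tree maps is respected by the B-series, the restriction $B_f|_{\tBG}$ takes values in the real subgroup and coincides with $B_g$. Thus $B_f$ is a complex analytic extension of $B_g$, whence $B_g$ is real analytic. The main obstacle is the Banach-valued analyticity argument in the third paragraph; everything else is bookkeeping that mirrors the corresponding formal-series statement.
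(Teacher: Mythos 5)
Your proposal is correct and follows essentially the same route as the paper: well-definedness and the antimorphism property via \eqref{eq: Bseries:insert}, reduction to the Banach steps $B_1^{\omega_n}(e)\cap\tBGC$ and the factorization through $\BHol(B_{1/m(n)}^{E\times\C}(y_0,0),E\times\C)_{\{(y_0,0)\}}$ from Proposition \ref{prop: nmaps:DGerm}, and the complexification argument for (b). The only (cosmetic) difference is the last analyticity step: the paper observes directly that the chart representation $\Phi\circ B_f\circ\Psi^{-1}$ is linear in $a$ and checks its continuity by the Lipschitz-type estimate \eqref{est: cont:stepmap}, whereas you obtain the same continuous linear map as an operator-norm-convergent series $\sum_\tau T_\tau$ of continuous linear maps using \eqref{eq: cp} -- both rest on the linearity of the B-series in the coefficients and on Proposition \ref{prop: convergence}.
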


\begin{proof}
 \begin{enumerate}
  \item By definition of the equivalence relation $\sim$ on $\DiffGerm (\{(y_0,0)\}, E \times \C)$ two maps coincide if their germs are equivalent.
  Hence we deduce from the construction $B_n (a) = B_{m} (a)$ for all $n,m \in \N$ with $m \leq n$ and $a \in B_1^{\omega_n} (e) \cap \tBGC$.
  Thus $B_f$ makes sense as a map.
  Then $B_f$ induces  a group antimorphism as \eqref{eq: Bseries:insert} implies
    \begin{displaymath}
     B_f (b) \circ B_f (a)  = B_f (a\cdot b) \text{ for all } a,b \in \tBGC.
    \end{displaymath}
  We now prove that $B_f$ is continuous.
  Since $\tBGC$ is an affine subspace of a Silva space, it suffices to check continuity on each step of the inductive limit, i.e.\ we have to prove that the maps $B_n$ are continuous.
  As $\DiffGerm  (\{(y_0,0)\}, E \times \C)$ admits a global chart $\Phi$ it suffices to check that $\Phi \circ B_n$ is continuous for each $n \in \N$.

  Now for fixed $n\in \N$, Proposition \ref{prop: nmaps:DGerm} asserts that there is $m \in \N$ such that $\Phi \circ B_n$ factors through $X_m \coloneq \BHol (B_{\frac{1}{m}}^{E\times \C} (y_0,0) , E \times \C)_{\{(y_0,0)\}}$.
  In the following, we will abbreviate $O_m \coloneq B_{\frac{1}{m}}^{E\times \C} (y_0,0)$.
  As the model space $\Germ (\{(y_0,0)\},E \times \C)_{\{(y_0,0)\}}$ is the inductive limit of the Banach spaces $X_m$, it suffices thus to prove that $\Phi \circ B_n$ is continuous as a map to $X_m$.
  Now we have reduced the problem to prove that the mapping
    \begin{displaymath}
     h_n \colon B_1^{\omega_n} (e) \cap \tBGC \rightarrow X_m, a \mapsto h_n(a) \coloneq B_n (a)|_{O_m} - \id_{O_m},
    \end{displaymath}
 is continuous. To see this, pick a sequence $(a_k)_{k \in \N} \subseteq B_1^{\omega_n} (e) \cap \tBGC$ which converges to $a$.
 Then we compute -- using the absolute convergence of the series which was established in Step 2 of the proof of Proposition \ref{prop: nmaps:DGerm} -- the estimate
 \begin{equation}  \begin{aligned}
 & \hphantom{\stackrel{\eqref{eq: Bseries}}{\leq}}\sup_{(y,h) \in O_m} \norm{h_n (a_k) - h_n (a)} =  \sup_{(y,h) \in O_m} \norm{\sum_{\tau \in \RT} \frac{h^{|\tau|} (a_k (\tau) - a(\tau))}{\sigma (\tau)} F (\tau) (y)} \\
	  &  \stackrel{\hphantom{\text{Proposition }\ref{prop: convergence}}}{\leq} \sup_{(y,h) \in O_m} \sum_{q = 1}^\infty \norm{\sum_{\substack{\tau \in \RT \\ |\tau| = q}} \frac{h^{q} (a_k (\tau) - a(\tau))}{\sigma (\tau)} F (\tau) (y)}  \\
	  &\stackrel{\text{Proposition }\ref{prop: convergence}}{\leq} \frac{1}{2} \norm{a_k - a}_{\omega_n} \underbrace{\sum_{q\in \N} \left( \frac{4eMKh_0}{R}\right)^q}_{< \infty \text{ by choice of } h_0 \text{ cf. } \eqref{eq: cp}}.
	  \end{aligned}\label{est: cont:stepmap}
 \end{equation}
 Now $a_k \rightarrow a$ in $B_1^{\omega_n} (e) \cap \tBGC$ \eqref{est: cont:stepmap} entails that $h_n$ is continuous.
 As $n \in \N$ was arbitrary, we deduce that also $B_f$ is continuous.

 Finally, both Lie groups admit global charts $\Phi$ and $\Psi \colon \tBGC \rightarrow \Msp{\C} , a \mapsto a-e$, respectively.
 Composing with these charts, we obtain for $a,b \in \Msp{\C}$ and $z \in \C$ the identity
  \begin{align*}
   \Phi \circ B_f \circ \Psi^{-1} (a+zb) &= \sum_{\tau \in \RT} \frac{h^{|\tau|} (a (\tau) + zb(\tau))}{\sigma (\tau)} F (\tau) (y) \\
   &= \sum_{\tau \in \RT} \frac{h^{|\tau|} a (\tau)}{\sigma (\tau)} F (\tau) (y) + z\sum_{\tau \in \RT} \frac{h^{|\tau|}  b(\tau))}{\sigma (\tau)} F (\tau) (y)\\
   &= \Phi \circ B_f \circ \Psi^{-1} (a) + z \Phi \circ B_f \circ \Psi^{-1} (b).
  \end{align*}
 Hence these charts take $B_f$ to a continuous linear, whence holomorphic map.
 Thus $B_f \colon \tBGC \rightarrow (\DiffGerm (\{(y_0,0)\}, E \times \C) ,\circ)$ is an antimorphism of complex Lie groups.
 \item By part (a) the map $B_f$ is a antimorphism of complex Lie groups.
  Corollary \ref{cor: LGP:compl} asserts that $\tBGC$ is a complexification of the real analytic Lie group $\tBG$.
  Moreover, there is a canonical inclusion
    \begin{displaymath}
     \text{inc} \colon \DiffGerm (\{(y_0,0)\}, F \times \R) \rightarrow \DiffGerm (\{(y_0,0)\}, E \times \C). [\eta]_\sim \mapsto [\tilde{\eta}]_\sim ,
    \end{displaymath}
 where $\tilde{\eta}$ is a map which restricts to $\eta$ on the intersection of its domain with $F \times \R$.
 This inclusion turns $\DiffGerm (\{(y_0,0)\}, E \times \C)$ into a complexification of the real analytic Lie group $\DiffGerm (\{(y_0,0)\}, F \times \R)$ (see \cite{MR2670675} and \cite[Corollary 15.11]{hg2007}).
 Now it is easy to see that the antimorphism $B_f$ takes $\tBG$ to the image of $\text{inc}$.
 Hence we deduce from the properties of the complexifications and $\text{inc} \circ B_g = B_f|_{\tBG}$ that $B_g$ is a real analytic antimorphism of Lie groups.  \qedhere
\end{enumerate}
\end{proof}

\begin{rem}
 \begin{enumerate}
  \item The Lie group antimorphism from part (b) of Theorem \ref{thm: LGP:Mor} yields germs of diffeomorphisms which take the step size $h$ as a real parameter.
  Hence these mappings are more closely related to the way B-series are treated in numerical analysis.
  Note however, that we needed complex parameters $h$ in the proof as the real case follows by a complexification argument from the complex case.
  \item Our proof depends in a crucial way on the estimates in Proposition \ref{prop: convergence}.
  In particular, this implies that the construction can not be lifted to yield a similar antimorphism from the Butcher group to $\DiffGerm (\{(y_0,0)\}, E \times \R)$. Similarly, the construction does not yield an antimorphism for the complex Butcher group.
 \end{enumerate}
\end{rem}

 We will now discuss the relation of the Lie group morphisms constructed in Proposition \ref{prop: formalmor} and Theorem \ref{thm: LGP:Mor}.
 To this end, let us first construct a Lie group morphism from the group of germs of diffeomorphisms to the group of formal diffeomorphisms.

 \begin{setup}
 Let again $E$ be a complex Banach space which is the complexification of a real Banach space $F$.
 For a germ $[\eta] \in \DiffGerm (\{(y_0,0)\}, E \times \C)$ we let $\text{Tay} ([\eta])$ be the Taylor series at $(y_0,0)$, i.e.\ we take the Taylor series of any representative of $[\eta]$ at $(y_0,0)$ and note that this is independent of choice of representative. This construction yields an injective group morphism
 \begin{align*}
  \Gamma \colon \DiffGerm (\{(y_0,0)\}, E \times \C) &\rightarrow \Gf (E\times \C),\\ [\eta] &\mapsto \left[ (x,h) \mapsto  \text{Tay} (\eta) (x+y_0,h) - (y_0 ,0)\right].
 \end{align*}
 Composing with charts, an easy inductive limit argument together with the identification $\Gf (E\times \C) \cong \Gf_1 (E\times \C) \times \GL(E\times \C)$ shows that $\Gamma$ is even smooth, i.e.\ an injective morphism of Lie groups.
 Similarly one constructs an injective Lie group morphism $\Gamma_\R \colon \DiffGerm (\{(y_0,0)\}, F \times \R) \rightarrow \Gf (F\times \R)$.
 \end{setup}

  Assume now that $y_0 \in F$ and $f$ and $g$ are as in the statement of Theorem \ref{thm: LGP:Mor}.
  One concludes with Proposition \ref{prop: formalmor} and Theorem \ref{thm: LGP:Mor} that the following diagram of Lie group morphisms and antimorphisms commutes
  \begin{equation}\label{diag: huge} \begin{aligned}
  \begin{xy}
  \xymatrix{
   &   \DiffGerm (\{(y_0,0)\}, F \times \R) \ar[rr]^-{\Gamma_\R} \ar[dd]|!{[d];[r]}\hole  &  &  \Gf (F\times \R) \ar[dd] & \ar[dd]^{\rotatebox{90}{ \text{ \footnotesize Complexification}}} \\
   \tBG \ar[rr]^(.65){\alpha_\R} \ar[dd] \ar[ru]^{B_g}  &  &  \BGp \ar[dd] \ar[ru]^{\formalmor_g}  &  &\\
   &   \DiffGerm (\{(y_0,0)\}, E \times \C) \ar[rr]|!{[d];[r]}\hole^{\Gamma}  &  &  \Gf (E\times \C) & \\
   \tBGC \ar[rr]^{\alpha} \ar[ru]^{B_f}  &  & \BGC \ar[ru]^{\formalmor_f}  & &
  }
\end{xy}
\end{aligned}
\end{equation}
 Here the vertical arrows in \eqref{diag: huge} represent the canonical mappings into the respective complexifications (cf.\ diagram \eqref{eq: relation}).

 The commutativity in the above diagram provides a new justification and concretization of a proof technique often used for B-series.
 We illustrate this by discussing the following example.

\begin{ex}[Preservation of quadratic integrals]
 In \cite[Theorem VI.7.1]{HLW2006} a condition is provided which ensures that a B-series method preserves a quadratic integral.
 Let us recall some details: Fix an analytic map $f \colon E \rightarrow E$ on a Banach space and assume that there is a quadratic function $Q\colon E \to \C$ satisfying $Q'(y) f(y)=0$ for all $y\in E$.

 Now one is interested in conditions on the element $a$ in the Butcher group such that the associated B-series method preserves the quadratic integral.
 Using the morphisms we introduced in the present section, the B-series method preserves the quadratic integral with respect to $Q$ if
 \begin{equation}\label{eq: first int}
  Q \left(\underbrace{ y + \sum_{\tau \in \RT} \frac{h^{|\tau|}}{\sigma (\tau)}a(\tau) F_f (\tau)(y) }_{\text{pr}_E \circ\formalmor_f(a)(y,h)}\right)= Q (y),
 \end{equation}
 where $\text{pr}_E \colon E\times \C \to E$ is the canonical projection.
 Fix $y_0 \in E$. Formally we can rewrite \eqref{eq: first int} as $Q\circ \text{pr}_E \circ \formalmor_f(a) = Q \circ \text{pr}_E$, where the composition means composition of the formal series $Q\colon E \to \C$ around $y_0$ with the formal series $\text{pr}_E \circ \formalmor_f(a) \colon E \times \C \to E$ around $(y_0,0)$, resulting in a formal series  $Q\circ \text{pr}_E \circ \formalmor_f(a)\colon E \times \C \to \C$ around $(y_0, 0)$.

 As $\formalmor_f \circ \alpha = \Gamma \circ B_f$, we see that for an element $a = \alpha (a')$ of the tame Butcher group, the equality \eqref{eq: first int} pulls back to an equality
 $$Q \circ \text{pr}_E \circ B_f(a') = Q \circ \text{pr}_E,$$
where both sides are germs of smooth function $ E \times \C \to \C$ around $(y_0,0)$.

In other words: Let $a$ be an element in the tame Butcher group, $f$ an analytical vector field and $y_0\in E$ satisfying the assumptions of \cite[Theorem VI.7.1]{HLW2006}.
Further we let $Q$ be a quadratic function satisfying $Q'(y) f(y)=0$ for all $y \in E$, then there exists a neighbourhood $U$ of $(y_0, 0)$ such that the B-series method converges and satisfies $Q (B(a,y,h)) = Q(y)$ for all $(y,h) \in U$.
 \end{ex}

 \section*{Acknowledgements}
The research on this paper was partially supported by the projects \emph{Topology in Norway} (Norwegian Research Council project 213458) and \emph{Structure Preserving Integrators, Discrete Integrable Systems and Algebraic Combinatorics} (Norwegian Research Council project 231632). We thank the anonymous referee for many insightful comments which led to several improvements of the article.
The authors would also like to thank Helge Gl\"{o}ckner and Rafael Dahmen for helpful discussions on the subject of this work.
Moreover, we thank Ernst Hairer for providing Proposition \ref{prop: convergence}.

\begin{appendix}
 \section{Locally convex differential calculus and manifolds}\label{App: lcvx:diff}

 Basic references for differential calculus in locally convex spaces are \cite{hg2002a,keller1974}.
 For the reader's convenience, we recall various definitions and results:

\begin{defn}\label{defn: deriv}
 Let $\K \in \{\R,\C\}$, $r \in \N \cup \{\infty\}$, $E$, $F$ locally convex $\K$-vector spaces and $U \subseteq E$ open.
 Moreover we let $f \colon U \rightarrow F$ be a map.
 If it exists, we define for $(x,h) \in U \times E$ the (real or complex) directional derivative
 $$df(x,h) \coloneq D_h f(x) \coloneq \lim_{t\rightarrow 0} t^{-1} (f(x+th) -f(x)).$$
 The map $f$ is called $C^r_\K$ if the iterated directional derivatives
    \begin{displaymath}
     d^{(k)}f (x,y_1,\ldots , y_k) \coloneq (D_{y_k} D_{y_{k-1}} \cdots D_{y_1} f) (x),
    \end{displaymath}
 exist for all $k \in \N_0$ such that $k \leq r$, $x \in U$ and $y_1,\ldots , y_k \in E$ and define continuous maps $d^{(k)} f \colon U \times E^k \rightarrow F$.
 If it is clear which $\K$ is meant, we simply write $C^r$ for $C^r_\K$.
 If $f$ is $C^\infty_\C$, $f$ is called \emph{holomorphic} and if $f$ is $C^\infty_\R$ we say that $f$ is \emph{smooth}.
\end{defn}

\begin{rem}\label{rem: analytic}
  A map $f \colon U \rightarrow F$ is of class $C^\infty_\C$ if and only if it \emph{complex analytic} i.e.,
  if $f$ is continuous and locally given by a series of continuous homogeneous polynomials (cf.\ \cite[Proposition 7.7]{MR2069671}).
 Then $f$ is said to be of class $C^\omega_\C$.
\end{rem}

For the readers convenience we prove the following lemma which seems to be part of the mathematical folklore.
\begin{lem}\label{lem:folklore}
 Let $U$ be an open subset of a complex locally convex space $E$ and $F$ be a complex locally convex space which is sequentially complete. Consider a set $\Lambda \subseteq L(F,\C)$ of complex linear functionals which separates the points on $F$.\footnote{i.e.\ for each $x\in F$ there is $\lambda \in \Lambda$ with $\lambda (x) \neq 0$.}. If a map $f \colon U \rightarrow F$ is continuous and
  \begin{displaymath}
   \lambda \circ f \colon U \rightarrow \C
  \end{displaymath}
 is complex analytic for each $\lambda \in \Lambda$, then $f$ is complex analytic.
\end{lem}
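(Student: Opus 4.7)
The strategy is the standard weak-to-strong holomorphy principle via $F$-valued Cauchy integrals: reduce to the one-variable case, use sequential completeness of $F$ to define an $F$-valued Cauchy integral along affine complex lines, and exploit the separating family $\Lambda$ to identify this integral with $f$ itself. Analyticity in the sense of Definition \ref{defn: deriv} / Remark \ref{rem: analytic} then follows from combining one-variable holomorphy with the assumed continuity of $f$.

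First, I would fix $x_0 \in U$ and $h \in E$ and choose $r>0$ with $x_0 + zh \in U$ for all $|z|\leq r$. Set $\phi(z):=f(x_0+zh)$ on $\{|z|\leq r\}$; this is continuous by continuity of $f$. For $|z_0|<r$, define the $F$-valued Riemann integral
\begin{displaymath}
 g(z_0) \coloneq \frac{1}{2\pi i}\oint_{|w|=r} \frac{\phi(w)}{w-z_0}\, dw,
\end{displaymath}
which exists because $w\mapsto \phi(w)/(w-z_0)$ is continuous on the compact circle $|w|=r$ and $F$ is sequentially complete (approximate by Riemann sums, which form a Cauchy sequence in $F$; use that each continuous seminorm on $F$ is bounded on the image of $\phi$). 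Since every $\lambda\in\Lambda$ is continuous and linear, it commutes with this Riemann integral, giving
\begin{displaymath}
 \lambda(g(z_0)) = \frac{1}{2\pi i}\oint_{|w|=r}\frac{\lambda(\phi(w))}{w-z_0}\,dw = (\lambda\circ\phi)(z_0),
\end{displaymath}
where the last equality is the classical scalar Cauchy integral formula applied to the holomorphic function $\lambda\circ\phi = (\lambda\circ f)\circ(z\mapsto x_0+zh)$. As $\Lambda$ separates points on $F$, it follows that $g=\phi$ on $\{|z_0|<r\}$. A direct differentiation-under-the-integral argument (again relying on sequential completeness) shows that $g$ is holomorphic as an $F$-valued function; hence so is $\phi$. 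In particular, the directional derivative $D_h f(x_0)=\phi'(0)$ exists in $F$, and the Cauchy coefficients
\begin{displaymath}
 a_n \coloneq \frac{1}{2\pi i}\oint_{|w|=r}\frac{\phi(w)}{w^{n+1}}\,dw \in F
\end{displaymath}
assemble into a convergent Taylor expansion $\phi(z)=\sum_n a_n z^n$, convergence measured against every continuous seminorm on $F$.

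Finally, I would promote this Gateaux holomorphy together with joint continuity of $f$ to complex analyticity in the sense of Remark \ref{rem: analytic}. Given $x_0\in U$, pick a balanced open neighbourhood $V$ of $0$ in $E$ with $x_0+V\subseteq U$ and define, for each $n$,
\begin{displaymath}
 p_n(h) \coloneq \frac{1}{2\pi i}\oint_{|w|=1} \frac{f(x_0+wh)}{w^{n+1}}\,dw,
\end{displaymath}
for $h$ so small that $x_0 + \overline{\mathbb{D}}\,h \subseteq U$. Applying $\lambda\in\Lambda$ and using the scalar case shows that $\lambda\circ p_n$ is a continuous $n$-homogeneous polynomial on $E$; since $\Lambda$ separates points, each $p_n$ is itself an $n$-homogeneous map (by polarisation and the scalar case), and joint continuity together with the Cauchy estimate guarantees continuity of $p_n$. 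Plugging $h=z\cdot$(direction) into the one-variable expansion and using that $\Lambda$ separates points then yields $f(x_0+h) = \sum_n p_n(h)$ on a $0$-neighbourhood, locally uniformly.

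The main obstacle is precisely the last step: upgrading holomorphy along complex affine lines to a genuine local power-series expansion by continuous homogeneous polynomials in the locally convex, sequentially complete (rather than Banach) setting. In the Banach-valued case this is classical (Dunford's theorem); here the subtlety lies in controlling the Cauchy integrals uniformly with respect to every continuous seminorm on $F$ and invoking sequential completeness rather than full completeness. Once the $F$-valued Cauchy formula is in hand, however, these estimates are routine and the claim follows.
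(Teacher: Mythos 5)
Your proposal is correct and follows essentially the same route as the paper: a vector-valued Cauchy integral along complex affine lines, identified with $f$ by testing against the separating family $\Lambda$, followed by the upgrade from G\^{a}teaux-analyticity plus continuity to full complex analyticity. The paper simply outsources the steps you sketch by hand (the weak Cauchy formula and the final upgrade you flag as the main obstacle) to Bochnak--Siciak \cite[Theorem 3.1, Proposition 5.5 and Corollary 6.1]{BS71b}.
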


\begin{proof}
 For all $x \in U$ and $y \in E$ and small enough $r >0$ we have for $|z|<r$ the equality
  \begin{displaymath}
   f(x+zy)=\frac{1}{2\pi i} \int_{|\zeta| = r} \frac{f(x+\zeta y)}{\zeta-z} d\zeta
  \end{displaymath}
 holds, where the integral on the right hand side is a weak integral.
 To see this note that it suffices to test with the functionals $\lambda \in \Lambda$ (as $\Lambda$ is point separating) and $z \mapsto \lambda (f(x+zy))$ is holomorphic.
 Hence by \cite[Theorem 3.1]{BS71b} the map $z \mapsto f(x+zy)$ is complex analytic and thus $f$ is G\^{a}teaux-analytic by \cite[Proposition 5.5]{BS71b}.
 As $f$ is continuous and G\^{a}teaux-analytic, \cite[Corollary 6.1]{BS71b} entails that $f$ is complex analytic.
\end{proof}

\begin{defn}
 Let $E$, $F$ be real locally convex spaces and $f \colon U \rightarrow F$ defined on an open subset $U$.
 A map $f$ is said to be \emph{real analytic} (or $C^\omega_\R$) if $f$ extends to a $C^\infty_\C$-map $\tilde{f}\colon \tilde{U} \rightarrow F_\C$ on an open neighbourhood $\tilde{U}$ of $U$ in the complexification $E_\C$.\footnote{This property was first defined by Milnor in \cite{milnor1983} for sequentially complete spaces (see \cite{hg2002a} for the general case)}
\end{defn}

For $\K \in \{\R,\C\}$ and $r \in \N_0 \cup \{\infty, \omega\}$ the composition of $C^r_\K$-maps (if possible) is again a $C^r_\K$-map (cf. \cite[Propositions 2.7 and 2.9]{hg2002a}).

\begin{defn} Fix a Hausdorff topological space $M$ and a locally convex space $E$ over $\K \in \{\R,\C\}$.
An ($E$-)manifold chart $(U_\kappa, \kappa)$ on $M$ is an open set $U_\kappa \subseteq M$ together with a homeomorphism $\kappa \colon U_\kappa \rightarrow V_\kappa \subseteq E$ onto an open subset of $E$.
Two such charts are called $C^r$-compatible for $r \in \N_0 \cup \{\infty,\omega\}$ if the change of charts map $\nu^{-1} \circ \kappa \colon \kappa (U_\kappa \cap U_\nu) \rightarrow \nu (U_\kappa \cap U_\nu)$ is a $C^r$-diffeomorphism.
A $C_\K^r$-atlas of $M$ is a set of pairwise $C^r$-compatible manifold charts, whose domains cover $M$. Two such $C^r$-atlases are equivalent if their union is again a $C^r$-atlas.

A \emph{locally convex $C^r$-manifold} $M$ modelled on $E$ is a Hausdorff space $M$ with an equivalence class of $C^r$-atlases of ($E$-)manifold charts.
\end{defn}

 Direct products of locally convex manifolds, tangent spaces and tangent bundles as well as $C^r$-maps of manifolds may be defined as in the finite dimensional setting (cf.\ e.g.\ \cite{MR2069671,neeb2006}).

\begin{defn}
A \emph{Lie group} is a group $G$ equipped with a $C^\infty$-manifold structure modelled on a locally convex space, such that the group operations are smooth.
If $G$ has a $\K$-analytic manifold structure and the group operations are in addition ($\K$-)analytic we call $G$ a \emph{($\K$)-analytic} Lie group.
\end{defn}

Recall the following criterion which allows differentiability of mappings on Silva spaces to be established:

\begin{prop}[{\cite[Proposition 4.5]{hg2011}}]\label{prop: silva}
 Consider the locally convex direct limit $E = \bigcup_{n \in \N} = E_n$ of Banach spaces $E_1 \subseteq E_2 \subseteq \cdots$ over $\K \in \{\R,\C\}$.
 Assume that the inclusion maps $E_n \rightarrow E_{n+1}$ are compact for each $n \in \N$, i.e.\ $E$ is a Silva space.

 \begin{enumerate}
  \item Then $E$ is Hausdorff and the locally convex direct limit topology coincides with the direct limit topology on $E$ as a direct limit of the $E_n, n \in \N$.
 \end{enumerate}
 Let $U_1 \subseteq U_2 \subseteq \cdots$ be an ascending sequence of open sets $U_n \subseteq E_n , n \in \N$, and $U \coloneq \bigcup_{n \in \N} U_n$.
 Fix $r \in \N_0 \cup \{\infty\}$ together with a Hausdorff locally convex space $F$ and let $f \colon U \rightarrow F$ be a map such that $f|_{U_n}$ is of class $C^r_\K$ for each $n \in \N$.
 \begin{enumerate}
 \item[\textup{(b)}]  Then $U$ is open in $E$ and $f$ is a $C^r_\K$ map.
 \end{enumerate}
\end{prop}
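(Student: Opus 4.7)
The plan is to handle (a) first, since (b) depends on it. For (a), I would invoke the classical theory of (DFS)-spaces: compactness of the bonding maps $E_n \hookrightarrow E_{n+1}$ implies \emph{compact regularity} of the inductive limit, i.e.\ every bounded subset of $E$ sits and is bounded inside some step $E_n$. From compact regularity, a theorem of Floret and Retakh asserts that the locally convex inductive limit topology on $E$ coincides with the final topology induced on $E$ by the inclusions $E_n \hookrightarrow E$. Hausdorffness then follows at once, since the Hausdorff topologies of the steps $E_n$ separate points of $E$.

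Given (a), openness of $U = \bigcup_n U_n$ in $E$ is a direct consequence: a subset of $E$ is open in the final topology iff its intersection with every $E_n$ is open in $E_n$. Write $U \cap E_n = \bigcup_m (U_m \cap E_n)$; for $m \geq n$ the set $U_m \cap E_n$ is the preimage of the open $U_m \subseteq E_m$ under the continuous inclusion $E_n \hookrightarrow E_m$, and for $m \leq n$ we have $U_m \subseteq U_n$. In either case the contributions are open in $E_n$, so $U \cap E_n$ is open there.

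For the $C^r$-statement I would induct on $r$. The base case $r = 0$ again reduces by (a) to showing continuity of $f$ on each $U \cap E_n$; since $\{U_m \cap E_n\}_{m \in \N}$ forms an open cover of $U \cap E_n$ on which $f$ is continuous by hypothesis, and continuity is a local property, this is immediate. For $r \geq 1$, pick $x \in U$ and $y_1, \ldots, y_k \in E$ with $k \leq r$; since both $x$ and the finitely many $y_i$ lie in a common step $E_N$ with $x \in U_N$, and $f|_{U_N}$ is $C^r$ by hypothesis, the iterated directional derivatives $d^{(k)} f(x, y_1, \ldots, y_k)$ exist and agree with those of $f|_{U_N}$.

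The main obstacle is verifying that $d^{(k)} f \colon U \times E^k \to F$ is \emph{jointly} continuous in all $k+1$ variables. The trick is to observe that $E \times E^k$ is itself a Silva space, being the inductive limit of the Banach spaces $E_n \times E_n^k$ whose bonding maps remain compact (finite products preserve compactness of operators), and that $U \times E^k$ is exhausted by the open sets $U_n \times E_n^k$ on which $d^{(k)} f$ is continuous thanks to the $C^r$-hypothesis on $f|_{U_n}$. Applying the already-established $r = 0$ case of (b) to this product Silva space yields joint continuity of every $d^{(k)} f$ for $k \leq r$, which closes the induction.
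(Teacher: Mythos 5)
This proposition is imported verbatim from Gl\"{o}ckner's work; the paper offers no proof of its own beyond the citation, so the only meaningful comparison is with the argument in the cited source. Your part (b) reproduces that argument essentially exactly: openness of $U$ and the $C^0$ case follow from the identification of the locally convex limit topology with the final topology, pointwise existence of the iterated derivatives is checked inside a single step, and joint continuity of $d^{(k)}f$ is obtained by viewing $U\times E^k$ as the exhaustion of the Silva space $E^{k+1}=\bigcup_n E_n^{k+1}$ by the open sets $U_n\times E_n^k$ (finite products of compact operators are compact, and countable locally convex direct limits commute with finite products, cf.\ \cite[Theorem 3.4]{MR1878717}) and applying the already established $C^0$ case there. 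That part is sound; the announced induction on $r$ is not actually needed, since each $k\leq r$ is handled directly once the $C^0$ case is available.

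The weak point is (a), on which everything else rests. The sentence ``Hausdorffness then follows at once, since the Hausdorff topologies of the steps separate points'' is not a proof: two points of some $E_n$ separated by open subsets of $E_n$ need not be separated by open subsets of $E$, because openness in $E$ requires an open trace in \emph{every} step, and locally convex direct limits of Hausdorff steps can in general fail to be Hausdorff (the final topology on a union of topological vector spaces need not even be a vector topology). Likewise, deducing the coincidence of the locally convex and the topological direct limit topologies from compact regularity alone is not a standard implication. The classical argument (Floret \cite{MR0287271}) uses the compactness of each bonding map directly: given $W\subseteq E$ with $0\in W$ and $W\cap E_n$ open in $E_n$ for all $n$, one builds inductively convex $0$-neighbourhoods $V_n\subseteq W\cap E_n$ of $E_n$ with $V_n\subseteq V_{n+1}$, using at each stage that the closure of $V_n$ in $E_{n+1}$ is compact and contained in the open set $W\cap E_{n+1}$, so that it can be thickened by a small ball of $E_{n+1}$; the convex hull of $\bigcup_n V_n$ is then a $0$-neighbourhood for the locally convex topology contained in $W$. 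Hausdorffness comes out of the same circle of ideas (or from the duality of Silva spaces with Fr\'echet--Schwartz spaces), not from separation of points in the steps. You should either carry out this construction or cite it precisely, as the paper does, rather than derive (a) from the separation property of the $E_n$.
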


\section{Estimates and bounds for tree maps}\label{App: est:TM}

In this section we compute several technical auxiliary results needed to prove theorems in the main part.
To obtain upper bounds for products and inverses of (exponentially bounded) tree maps, we need the some estimates.

 \begin{setup}[Estimates for sets associated to a tree]
 Fix a rooted tree $\tau \in \RT$.
 Note that the number of nodes is an upper bound for the number of edges in the tree.
 Hence the number of splittings of a tree (see Definition \ref{defn: split}) is bounded by
 \begin{displaymath}
  |\SP{\tau}| = |\tau|+1  \text{ and } |\SP{\tau}_1| = |\tau| -1 .
 \end{displaymath}
 Moreover, the following estimates on the size of sets associated to ordered subtrees and partitions of a tree (see Notation \ref{nota: OST:P}) follow from identifying them as subsets of the power set of all nodes of $\tau$:
 \begin{align}
  |\OST (\tau)| &\leq 2^{|\tau|}  \text{  and}& \#(\tau \setminus s) &\leq |\tau| - |s_\tau|& \forall s &\in \OST (\tau),\label{eq: OST:est}\\
  |\pP (\tau)| &\leq 2^{|\tau|}  \text{  and} &\#(\tau \setminus p) &\leq |\tau|& \forall  p &\in \pP(\tau).  \label{eq: Part:est}
 \end{align}
\end{setup}

\begin{lem}\label{lem: tree:diff:est}
 Let $\tau$ be a rooted tree, $k \in \N$, $R \geq 1$ and $0< \ve <1$.
 Fix $a \in B_R^{\omega_k} (0)$ and $c \in B_\ve^{\omega_k} (0)$.
 If $s$ is either an element of $\OST (\tau)$ or a partition in $\pP (\tau)$ we obtain the following estimate
  \begin{displaymath}
    |(a +c) (\tau \setminus s) - a(\tau\setminus s)| \leq  \frac{2^{|\tau|}R^{|\tau|-1} \ve}{\prod_{\theta \in \tau \setminus s}\omega_k (\theta)}.
  \end{displaymath}
\end{lem}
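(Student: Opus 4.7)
The plan is to reduce the estimate to a difference of two ordinary scalar products via a telescoping identity. Writing the forest as $\tau \setminus s = \{\theta_1, \ldots, \theta_n\}$ with $n \coloneq \#(\tau \setminus s)$, by the definitions of $a(\tau\setminus s)$ and $(a+c)(\tau\setminus s)$ both sides are finite products. I would apply the standard telescope
\begin{displaymath}
 \prod_{i=1}^n (a(\theta_i)+c(\theta_i)) - \prod_{i=1}^n a(\theta_i) = \sum_{j=1}^n \left(\prod_{i<j}(a(\theta_i)+c(\theta_i))\right) c(\theta_j) \left(\prod_{i>j} a(\theta_i)\right).
\end{displaymath}
The cases $\OST(\tau)$ and $\pP(\tau)$ are then completely parallel, only the combinatorial bound on $n$ (invoked at the end) differs.

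Next I would insert the pointwise bounds coming from the ball assumptions. From $a \in B_R^{\omega_k}(0)$ and $c \in B_\ve^{\omega_k}(0)$ I get $|a(\theta)| \leq R/\omega_k(\theta)$ and $|c(\theta)| \leq \ve/\omega_k(\theta)$ for every $\theta \in \RT_0$, and therefore $|a(\theta)+c(\theta)| \leq (R+\ve)/\omega_k(\theta) \leq 2R/\omega_k(\theta)$ by the standing assumption $\ve < 1 \leq R$. Using the multiplicativity of $\omega_k$ under partition of a tree into a forest (Remark \ref{rem: weights:mult}), the factor $1/\prod_{\theta \in \tau\setminus s}\omega_k(\theta)$ pulls cleanly out of every summand, yielding
\begin{displaymath}
 |(a+c)(\tau\setminus s) - a(\tau\setminus s)| \leq \frac{\ve}{\prod_{\theta \in \tau\setminus s}\omega_k(\theta)} \sum_{j=1}^n (2R)^{j-1} R^{n-j}.
\end{displaymath}

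Finally I would factor $R^{n-1}$ out of the sum, leaving the geometric tail $\sum_{j=1}^n 2^{j-1} = 2^n - 1 \leq 2^n$, and invoke the estimates \eqref{eq: OST:est} (for $s \in \OST(\tau)$) and \eqref{eq: Part:est} (for $s \in \pP(\tau)$), both of which give $n \leq |\tau|$. Together with $R \geq 1$ this yields $2^n R^{n-1} \leq 2^{|\tau|} R^{|\tau|-1}$ and the claimed inequality follows. The edge case $n = 0$ (empty forest, e.g.\ when $s = \tau$) is trivial since both sides of the telescope equal the empty product $1$. I don't expect a real obstacle here; the only point that has to be monitored is the uniform bound $R + \ve \leq 2R$, which is where the hypotheses $\ve < 1$ and $R \geq 1$ are actually used.
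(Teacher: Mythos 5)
Your proof is correct, and it differs from the paper's only in the choice of elementary identity used to expand the difference of products. The paper expands $\prod_{\theta\in\tau\setminus s}(a(\theta)+c(\theta))$ over the power set of the forest, cancels the term $P=\tau\setminus s$ against $a(\tau\setminus s)$, and bounds each of the at most $2^{\#(\tau\setminus s)}-1$ remaining terms by $\ve R^{|\tau|-1}/\prod_{\theta}\omega_k(\theta)$ (using $\ve<1$ to absorb repeated factors of $\ve$); you instead telescope, obtaining $n=\#(\tau\setminus s)$ terms bounded by $(2R)^{j-1}\ve R^{n-j}/\prod_\theta\omega_k(\theta)$, and the geometric sum $\sum_j 2^{j-1}\le 2^n$ recovers the same constant $2^{|\tau|}R^{|\tau|-1}$. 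Both routes use exactly the same inputs ($R\ge 1$, $\ve<1$, and $\#(\tau\setminus s)\le|\tau|$ from \eqref{eq: OST:est} and \eqref{eq: Part:est}), so neither buys anything over the other. One small remark: your appeal to Remark \ref{rem: weights:mult} is unnecessary --- you never need the identity $\prod_{\theta\in\fF}\omega_k(\theta)=\omega_k(\tau)$, only the trivial fact that each factor in a summand contributes its own $1/\omega_k(\theta)$ to the common denominator $\prod_{\theta\in\tau\setminus s}\omega_k(\theta)$ --- but this is harmless. Your handling of the edge case $n=0$ is also fine.
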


\begin{proof}
 Let us denote the power set of the forest $\tau \setminus s$ by $\text{Pow} (\tau \setminus s)$.
 Using that $R\geq 1$ and $\ve < 1$ we obtain the following estimates:
 \begin{align*}
   &|(a +c) (\tau \setminus s) - a(\tau\setminus s)| = \left|\prod_{\theta \in \tau \setminus s} (a(\theta) + c(\theta)) - \prod_{\theta \in \tau \setminus s} a(\theta)\right| \\
	  =& \left|\sum_{\substack{P \in \text{Pow} (\tau \setminus s)\\ P \neq \tau \setminus s}} \prod_{\theta \in P} a(\theta) \prod_{\delta \in (\tau \setminus s) \setminus P} c(\delta)\right|
	  \leq \sum_{\substack{P \in \text{Pow} (\tau \setminus s)\\ P \neq \tau \setminus s}} \prod_{\theta \in P} \frac{R}{\omega_k (\theta)} \prod_{\delta \in (\tau \setminus s) \setminus P} \frac{\varepsilon}{\omega_k (\delta)}\\
	  \leq& \sum_{\substack{P \in \text{Pow} (\tau \setminus s)\\ P \neq \tau \setminus s}} \frac{\ve R^{|\tau|-1}}{\prod_{\theta \in \tau \setminus s }\omega_k (\theta)} \leq \frac{2^{|\tau|}R^{|\tau|-1} \ve}{\prod_{\theta \in \tau \setminus s }\omega_k (\theta)}.
 \end{align*}
 For the last inequality one uses \eqref{eq: OST:est} or \eqref{eq: Part:est} to see that $|\text{Pow} (\tau \setminus s)| \leq 2^{\#(\tau \setminus s)} \leq 2^{|\tau|}$.
\end{proof}

\begin{lem}\label{lem: estimates1}
 Let $R \geq 1$, $k \in \N$ and $a ,b \in B^{\omega_k}_R (e) \cap \tBGC$.
 Furthermore, fix $1 > \ve > 0$ with $\norm{a}_{\omega_k} + \norm{b}_{\omega_k} + 2 \ve < R$  and $c, d \in B^{\omega_k}_\ve (0) \cap \Msp{\C}$.
 Then $a + c , b +c \in \tBGC$ and for evaluating their product in $\tau \in \RT$, we obtain the following estimates:
 \begin{align}\label{eq: est1}
  |\ev_\tau [(a + c) \cdot (b + d) - a \cdot b]| \leq 2\ve \frac{(4R)^{|\tau|}}{\omega_k (\tau)}.
 \end{align}
\end{lem}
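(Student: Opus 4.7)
The plan is to use the explicit formula \eqref{eq: BGP:mult} for the product on $\tBGC$ to expand $(a+c)\cdot(b+d) - a\cdot b$ as a sum over $\OST(\tau)$, then to split each summand into two pieces measuring the variation in $b$ and the variation in $a$ separately, and finally to bound both pieces using the weight estimates together with multiplicativity of $\omega_k$ from Remark \ref{rem: weights:mult}.

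First I would check the preliminary claim that $a+c$ and $b+d$ still lie in $\tBGC$: since $c,d \in \Msp{\C}$ we have $(a+c)(\emptyset)=a(\emptyset)=1$, and since $c,d$ are exponentially bounded (being elements of $\Mo{\C}$) their sum with an element of $\tBGC$ remains exponentially bounded. Then, for fixed $\tau \in \RT$, I would write
\begin{align*}
(a+c)\cdot(b+d)(\tau) - a\cdot b(\tau)
 = \sum_{s\in\OST(\tau)}\Bigl[&d(s_\tau)\,(a+c)(\tau\setminus s) \\
 &+ b(s_\tau)\bigl((a+c)(\tau\setminus s) - a(\tau\setminus s)\bigr)\Bigr].
\end{align*}

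For the first piece, I would bound $|d(s_\tau)|\leq \ve/\omega_k(s_\tau)$ and $|(a+c)(\theta)|\leq (R+\ve)/\omega_k(\theta)\leq 2R/\omega_k(\theta)$ for each $\theta\in\tau\setminus s$ (using $\ve<R$ and $R\geq 1$), multiplying these factors together and invoking the multiplicativity $\omega_k(s_\tau)\prod_{\theta\in\tau\setminus s}\omega_k(\theta)=\omega_k(\tau)$ to conclude the summand is at most $\ve(2R)^{|\tau|}/\omega_k(\tau)$. For the second piece, I would apply Lemma \ref{lem: tree:diff:est} directly to control $|(a+c)(\tau\setminus s)-a(\tau\setminus s)|$ by $2^{|\tau|}R^{|\tau|-1}\ve / \prod_{\theta\in\tau\setminus s}\omega_k(\theta)$, then multiply by $|b(s_\tau)|\leq R/\omega_k(s_\tau)$ and again use multiplicativity of $\omega_k$ to obtain the same bound $\ve(2R)^{|\tau|}/\omega_k(\tau)$.

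Summing over $s\in\OST(\tau)$ and using $|\OST(\tau)|\leq 2^{|\tau|}$ from \eqref{eq: OST:est} produces the factor $2\cdot 2^{|\tau|}\cdot (2R)^{|\tau|} = 2\cdot (4R)^{|\tau|}$, yielding the desired estimate \eqref{eq: est1}. The main bookkeeping issue — and the only real obstacle — is making sure the weight factors line up correctly; this is handled entirely by the multiplicativity of $\omega_k$ for tree partitions, which converts the product of local weights into $\omega_k(\tau)$ in the denominator. Everything else is routine normed-space estimation plus a direct application of Lemma \ref{lem: tree:diff:est}.
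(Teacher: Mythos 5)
Your proposal is correct and follows essentially the same route as the paper's proof: the same two-term decomposition $d(s_\tau)(a+c)(\tau\setminus s)+b(s_\tau)\bigl((a+c)(\tau\setminus s)-a(\tau\setminus s)\bigr)$ of each summand, the same appeal to Lemma \ref{lem: tree:diff:est} and to the multiplicativity of $\omega_k$, and the same count $|\OST(\tau)|\leq 2^{|\tau|}$. The only (harmless) difference is that the paper bounds $\norm{a}_{\omega_k}$ and $\norm{b}_{\omega_k}$ by $2R$ via $B_R^{\omega_k}(e)\subseteq B_{2R}^{\omega_k}(0)$, whereas you use the sharper bound $<R$ coming from the hypothesis $\norm{a}_{\omega_k}+\norm{b}_{\omega_k}+2\ve<R$; both land on the stated estimate.
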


\begin{proof}
Since $\tBGC$ is the affine subspace $e+ \Msp{\C}$ (see \ref{lem: aff:subs}), the elements $a+c$ and $b+c$ are contained in $\tBGC$.
Note that $\norm{e}_{\omega_k} = \sup_{\tau \in \RT_0} \abs{e(\tau) \omega_k (\tau)} = \abs{ e(\emptyset) \omega_k (\emptyset)} = 1 \leq R$.
Hence $B_R^{\omega_k} (e) \subseteq B_{2R}^{\omega_k} (0)$.
We now compute:
 \begin{align*}
   &|\ev_\tau [ (a + c) \cdot (b + d) - a \cdot b]|\\
   &= \left| \sum_{s \in \OST (\tau)} ( (b + d) (s_{\tau}) (a +c) (\tau\setminus s) - b(s_\tau) a(\tau\setminus s))  \right|\\
   &= \left|\sum_{s \in \OST (\tau)} b(s_\tau) ((a +c) (\tau \setminus s) - a(\tau\setminus s)) + d(s_{\tau})(a+c) (\tau \setminus s) \right|  \\
   &\leq \sum_{s \in \OST(\tau)} \left(\underbrace{\frac{2R |(a+c) (\tau\setminus s) - a(\tau\setminus s)|}{\omega_k (s_{\tau})}}_{\equalscolon S_1}
   + \underbrace{\frac{\ve \prod_{\theta \in \tau \setminus s} |a(\theta) + c (\theta)|} {\omega_k (s_{\tau})}}_{\equalscolon S_2}\right).
  \end{align*}
 We consider the summands $S_1$ and $S_2$ separately.
From Lemma \ref{lem: tree:diff:est} we conclude
 \begin{displaymath}
  S_1 \leq \frac{2R}{\omega_k (s_{\tau})} \cdot  \frac{2^{|\tau|}(2R)^{|\tau|-1} \ve}{\prod_{\theta \in \tau \setminus s}\omega_k (\theta)} \stackrel{\text{Remark \ref{rem: weights:mult}}}{=}  \frac{(4R)^{|\tau|} \ve}{\omega_k(\tau)}.
 \end{displaymath}
To obtain a bound for $S_2$ recall that $\norm{a}_{\omega_k} + \ve < 2R$.
Hence, $|a (\theta) + c(\theta)| \leq \tfrac{2R}{\omega_k (\theta)}$ holds for all $\theta \in \RT$.
Using again that $\omega_k$ is multiplicative, we obtain $|S_2| \leq \frac{\ve (2R)^{|\tau|}}{\omega_k (\tau)}$.

Plug the estimates for $S_1$ and $S_2$ into the formula.
As $|\OST(\tau)| \leq 2^{|\tau|}$ holds by \eqref{eq: OST:est}, the formula then yields the desired estimate.
\end{proof}
\end{appendix}

% Your bilbigraphy


\begin{thebibliography}{99}
 \def\polhk#1{\setbox0=\hbox{#1}{\ooalign{\hidewidth
  \lower1.5ex\hbox{`}\hidewidth\crcr\unhbox0}}}
\providecommand{\url}[1]{\texttt{#1}}
\providecommand{\urlprefix}{URL }
%\expandafter\ifx\csname urlstyle\endcsname\relax

\providecommand{\eprint}[2][]{\url{#2}}

\bibitem[Bas64]{MR0177277}
Bastiani, A.
\newblock \emph{Applications diff\'erentiables et vari\'et\'es
  diff\'erentiables de dimension infinie}.
\newblock J. Analyse Math. \textbf{13} (1964):1--114

\bibitem[BDS15]{1501.05221v3}
Bogfjellmo, G., Dahmen, R. and Schmeding, A.
\newblock \emph{{Character groups of Hopf algebras as infinite-dimensional Lie
  groups}} 2015. \newblock to appear in Ann. Inst. Fourier (Grenoble),
\newblock cf.\ \eprint{arXiv:1501.05221v3}

\bibitem[BGN04]{MR2069671}
Bertram, W., Gl{\"o}ckner, H. and Neeb, K.-H.
\newblock \emph{Differential calculus over general base fields and rings}.
\newblock Expo. Math. \textbf{22} (2004)(3):213--282.

\bibitem[Bro04]{Brouder-04-BIT}
Brouder, C.
\newblock \emph{{Trees, renormalization and differential equations}}.
\newblock BIT Num. Anal. \textbf{44} (2004):425--438

\bibitem[BS71a]{BS71a}
Bochnak, J. and Siciak, J.
\newblock \emph{Polynomials and multilinear mappings in topological vector
  spaces}.
\newblock Studia Math. \textbf{39} (1971):59--76

\bibitem[BS71b]{BS71b}
Bochnak, J. and Siciak, J.
\newblock \emph{Analytic functions in topological vector spaces}.
\newblock Studia Math. \textbf{39} (1971):77--112

\bibitem[BS15]{BS14}
Bogfjellmo, G. and Schmeding, A.
\newblock \emph{The lie group structure of the Butcher group}.
\newblock Found. Comput. Math. (2015):1--33.
\newblock DOI: 10.1007/s10208-015-9285-5.

\bibitem[But72]{Butcher72}
Butcher, J.~C.
\newblock \emph{An algebraic theory of integration methods}.
\newblock Math. Comp. \textbf{26} (1972):79--106

\bibitem[CHV10]{CHV2010}
Chartier, P., Hairer, E. and Vilmart, G.
\newblock \emph{Algebraic structures of {B}-series}.
\newblock Found. Comput. Math. \textbf{10} (2010)(4):407--427.

\bibitem[Dah10]{MR2670675}
Dahmen, R.
\newblock \emph{Analytic mappings between {LB}-spaces and applications in
  infinite-dimensional {L}ie theory}.
\newblock Math. Z. \textbf{266} (2010)(1):115--140.

\bibitem[Dah11]{dahmen2011}
Dahmen, R.
\newblock \emph{{D}irect limit constructions in infinite dimensional {L}ie
  theory}.
\newblock Ph.D. thesis, {University of Paderborn} 2011.
\newblock \eprint{urn:nbn:de:hbz:466:2-239}

\bibitem[Flo71]{MR0287271}
Floret, K.
\newblock \emph{Lokalkonvexe {S}equenzen mit kompakten {A}bbildungen}.
\newblock J. Reine Angew. Math. \textbf{247} (1971):155--195

\bibitem[FW68]{FW68}
Floret, K. and Wloka, J.
\newblock \emph{Einf\"uhrung in die {T}heorie der lokalkonvexen {R}\"aume}.
\newblock Lecture Notes in Mathematics, No. 56 (Springer-Verlag, Berlin-New
  York, 1968)

\bibitem[Gl{\"o}02]{hg2002a}
Gl{\"o}ckner, H.
\newblock \emph{Infinite-dimensional {L}ie groups without completeness
  restrictions}.
\newblock In \emph{Geometry and analysis on finite- and infinite-dimensional
  {L}ie groups ({B}\polhk edlewo, 2000)}, Banach Center Publ., vol.~55, pp.
  43--59 (Polish Acad. Sci., Warsaw, 2002)

\bibitem[Gl{\"o}07]{hg2007}
Gl{\"o}ckner, H.
\newblock \emph{Direct limits of infinite-dimensional {L}ie groups compared to
  direct limits in related categories}.
\newblock J. Funct. Anal. \textbf{245} (2007)(1):19--61.

\bibitem[Gl{\"o}11]{hg2011}
Gl{\"o}ckner, H.
\newblock \emph{Direct limits of infinite-dimensional {L}ie groups}.
\newblock In \emph{Developments and trends in infinite-dimensional {L}ie
  theory}, Progr. Math., vol. 288, pp. 243--280 (Birkh{\"a}user Boston, Inc.,
  Boston, MA, 2011).

\bibitem[Gl{\"o}15a]{hg2015}
Gl{\"o}ckner, H.
\newblock \emph{{Fundamentals of submersions and immersions between
  infinite-dimensional manifolds}} 2015.
\newblock \eprint{arXiv:1502.05795}

\bibitem[Gl{\"o}15b]{HG15reg}
Gl{\"o}ckner, H.
\newblock \emph{{Regularity properties of infinite-dimensional Lie groups, and
  semiregularity}} 2015.
\newblock \eprint{arXiv:1208.0715v3}

\bibitem[HL97]{HL1997}
Hairer, E. and Lubich, C.
\newblock \emph{The life-span of backward error analysis for numerical
  integrators}.
\newblock Numer. Math. \textbf{76} (1997)(4):441--462.

\bibitem[HLW06]{HLW2006}
Hairer, E., Lubich, C. and Wanner, G.
\newblock \emph{{Geometric Numerical Integration}}, Springer Series in
  Computational Mathematics, vol.~31 (Springer Verlag, $^2$2006)

\bibitem[HSTH01]{MR1878717}
Hirai, T., Shimomura, H., Tatsuuma, N. and Hirai, E.
\newblock \emph{Inductive limits of topologies, their direct products, and
  problems related to algebraic structures}.
\newblock J. Math. Kyoto Univ. \textbf{41} (2001)(3):475--505

\bibitem[IR90]{MR1070716}
Ireland, K. and Rosen, M.
\newblock \emph{A classical introduction to modern number theory}, Graduate
  Texts in Mathematics, vol.~84 (Springer-Verlag, New York, 1990), second edn.

\bibitem[Kel74]{keller1974}
Keller, H.
\newblock \emph{{Differential Calculus in Locally Convex Spaces}}.
\newblock Lecture Notes in Mathematics 417 (Springer Verlag, Berlin, 1974)

\bibitem[KM97]{KM97}
Kriegl, A. and Michor, P.~W.
\newblock \emph{{The convenient setting of global analysis}}, {Mathematical
  Surveys and Monographs}, vol.~53 (AMS, 1997)

\bibitem[KR01]{MR1828283}
Kamran, N. and Robart, T.
\newblock \emph{A manifold structure for analytic isotropy {L}ie pseudogroups
  of infinite type}.
\newblock J. Lie Theory \textbf{11} (2001)(1):57--80

\bibitem[Lan99]{MR1659317}
Lang, S.
\newblock \emph{Complex analysis}, Graduate Texts in Mathematics, vol. 103
  (Springer-Verlag, New York, 1999), fourth edn.

\bibitem[Lei94]{MR1292806}
Leitenberger, F.
\newblock \emph{Unitary representations and coadjoint orbits for a group of
  germs of real analytic diffeomorphisms}.
\newblock Math. Nachr. \textbf{169} (1994):185--205.

\bibitem[Mil83]{milnor1983}
Milnor, J.
\newblock \emph{{Remarks on infinite-dimensional Lie groups}}.
\newblock In B.~DeWitt and R.~Stora (Eds.), \emph{Relativity, Groups and
  Topology II}, pp. 1007--1057 (North Holland, New York, 1983)

\bibitem[MMMKV15]{MMMV14}
McLachlan, R.~I., Modin, K., Munthe-Kaas, H. and Verdier, O.
\newblock \emph{B-series methods are exactly the affine equivariant methods}.
\newblock Numer. Math. (2015):1--24.


\bibitem[Muj86]{MR842435}
Mujica, J.
\newblock \emph{Complex analysis in {B}anach spaces}, North-Holland Mathematics
  Studies, vol. 120 (North-Holland Publishing Co., Amsterdam, 1986).
\newblock Notas de Matem{\'a}tica [Mathematical Notes], 107

\bibitem[Nee01]{MR1853240}
Neeb, K.-H.
\newblock \emph{Infinite-dimensional groups and their representations}.
\newblock In \emph{Infinite dimensional {K}\"ahler manifolds ({O}berwolfach,
  1995)}, DMV Sem., vol.~31, pp. 131--178 (Birkh\"auser, Basel, 2001)

\bibitem[Nee06]{neeb2006}
Neeb, K.
\newblock \emph{{Towards a Lie theory of locally convex groups}}.
\newblock Japanese Journal of Mathematics \textbf{1} (2006)(2):291--468

\bibitem[SW15]{SchmedingWockel15}
Schmeding, A. and Wockel, C.
\newblock \emph{{(Re)constructing Lie groupoids from their bisections and
  applications to prequantisation}} 2015.
\newblock \eprint{arXiv:1506.05415}

\end{thebibliography}
\end{document}